
\documentclass[aos,noinfoline]{imsart}

\makeatletter
\def\journal@name{$ $} 
\makeatother

\usepackage[utf8]{inputenc}

\usepackage{amssymb,amsmath,latexsym,amsthm}
\usepackage{graphicx}
\usepackage{xcolor, colortbl}
\usepackage{tikz}
\usetikzlibrary{arrows,calc,positioning,shapes}
\usetikzlibrary{external}\tikzexternalize
\usepackage{booktabs}
\usepackage{siunitx}
\usepackage{dcolumn}

\usepackage{url}
\usepackage{verbatim}
\usepackage{algorithm}
\usepackage{algorithmic}

\usepackage[caption=false]{subfig}
\usepackage{enumitem}

\usepackage[round]{natbib}

\usepackage[colorlinks]{hyperref}


\newcolumntype{d}{D{.}{.}{-1}}

\newcommand{\face}{F}

\DeclareMathOperator{\rank}{rank}
\DeclareMathOperator{\supp}{supp}

\DeclareMathOperator{\MLE}{MLE}
\DeclareMathOperator{\EMLE}{EMLE}

\newcommand{\hmuFF}{\hat{\mu}^{F\smash{'_2\setminus F'_1}}}

\definecolor{LRed}{rgb}{1,.8,.8}
\def\R{{\mathbf R}}
\def\F{{\mathbb F}}

\def\N{{\mathbb N}}

\def\<{\langle}
\def\>{\rangle}

\def\i{\underline i}

\def\t{\theta}
\def\tl{\triangleleft}

\newcommand{\Ecal}{\mathcal{E}}

\newcommand{\Fbf}{\mathbf{F}}
\newcommand{\Pbf}{\mathbf{P}}

\newcommand{\gmle}{p_{*}}

\newcommand{\ol}[1]{\overline{#1}}

\newcommand{\be}{\begin{equation}}
\newcommand{\ee}{\end{equation}}

\newtheorem{theorem}{Theorem}[section]

\newtheorem{lemma}[theorem]{Lemma}
\theoremstyle{remark}
\newtheorem{remark}[theorem]{Remark} 
\theoremstyle{definition}

\newtheorem{example}[theorem]{Example} 

\newcommand{\griddefinenodes}[2]{
  \foreach \j in {1,...,#2} \foreach \i in {1,...,#1} {
    \path (\j,-\i) coordinate (X\i\j);}}
\newcommand{\griddrawedges}[2]{
  \foreach \i in {1,...,#1} \foreach \j [count=\jj] in {2,...,#2} { \draw (X\i\jj) -- (X\i\j);}
  \foreach \j in {1,...,#2} \foreach \i [count=\ii] in {2,...,#1} { \draw (X\ii\j) -- (X\i\j);}}
\newcommand{\griddrawnodes}[2]{
  \foreach \j in {1,...,#2} \foreach \i in {1,...,#1} {
    \filldraw[fill=green!50!black] (X\i\j) circle (3pt); }}
\newcommand{\gridaddlabels}[3][1]{
  \foreach \j in {1,...,#3} {
    \node[label=above:{\pgfmathparse{#2*(\j-1)+#1}\pgfmathprintnumber{\pgfmathresult}}] at (X1\j) {};
    \node[label=below:{\pgfmathparse{#2*\j-1+#1}\pgfmathprintnumber{\pgfmathresult}}] at (X#2\j) {}; }}
\newcommand{\grid}[2]{
  \griddefinenodes{#1}{#2}
  \griddrawedges{#1}{#2}
  \griddrawnodes{#1}{#2}}
\newcommand{\labelledgrid}[3][1]{
  \grid{#2}{#3}
  \gridaddlabels[#1]{#2}{#3}}


\newcommand\overmat[2]{\smash{
  \overbrace{#2\strut}^{\text{\color{black}#1}}}}

\usepackage{changes}
\definechangesauthor[color=green]{jr}

\begin{document}
\begin{frontmatter}
\title{Approximating faces of marginal polytopes in discrete hierarchical models}
\runtitle{Approximating faces of marginal polytopes}

\begin{aug}
  \author{\fnms{Nanwei} \snm{Wang}\corref{}\ead[label=nw]{wangnanw@yorku.ca}}
  \address{\printead{nw}}
\affiliation{York University, Toronto, Canada}
\and
\author{\fnms{Johannes} \snm{}Rauh\ead[label=jr]{jrauh@mis.mpg.de}}
\address{\printead{jr}}
\affiliation{York University, Toronto, Canada}
\affiliation{Max Planck Institute for Mathematics in the Sciences, Germany}
\and
\author{\fnms{H\'el\`ene} \snm{Massam}\ead[label=hm]{massamh@yorku.ca}}
\address{\printead{hm}}
\affiliation{York University, Toronto, Canada}
\thankstext{t1}{H. Massam gratefully acknowledges support from an NSERC Discovery Grant}
\runauthor{ Wang \&  Rauh \&  Massam}
\end{aug}
\begin{abstract}
  The existence of the maximum likelihood estimate in hierarchical loglinear models is crucial to the reliability of
  inference for this model. Determining whether the estimate exists is equivalent to finding whether the sufficient
  statistics vector $t$ belongs to the boundary of the marginal polytope of the model.  The dimension of the smallest
  face $\Fbf_t$ containing $t$ determines the dimension of the reduced model which should be considered for correct
  inference.  For higher-dimensional problems, it is not possible to 
 compute  $\Fbf_{t}$ exactly.
  \citet{MassamWang15:local_approach} found an outer approximation to $\Fbf_t$ using a collection of sub-models of the
  original model.  This paper refines the methodology to find an outer approximation and devises a new methodology to
  find an inner approximation.  The inner approximation is given not in terms of a face of the marginal polytope, but in
  terms of a subset of the vertices of~$\Fbf_t$.

  Knowing $\Fbf_t$ exactly indicates which cell probabilities have maximum likelihood estimates equal to $0$.
   When  $\Fbf_t$ cannot be obtained exactly, we can use, first, the outer approximation $\Fbf_2$ to  reduce the dimension of the problem and, then, the inner approximation $\Fbf_1$ to obtain correct estimates of cell probabilities corresponding to elements of $\Fbf_1$ and improve the estimates of the remaining probabilities corresponding to elements in $\Fbf_2\setminus \Fbf_1$.
Using both real-world and simulated data, we illustrate our results, and show  that our methodology scales to high dimensions. 

\vspace{.5cm}

\noindent
\textbf{Keywords:} existence of the maximum likelihood estimate, marginal polytope, faces, facial sets, extended maximum likelihood estimate.
\vspace{.5cm}
 
 \end{abstract}



\end{frontmatter}



\section{Introduction}
Discrete hierarchical models are an essential tool for the  analysis of categorical data given under the form of a contingency table.  
The study of these models goes back more than a century, and a detailed history of  their development is given in \citet{FienbergRinaldo07:three_centuries}. Nowadays, discrete hierarchical models are used for the analysis of large sparse contingency tables where many, if not most, of the entries are small or zero counts. It is well-known that in such cases, the maximum likelihood estimate (henceforth abbreviated MLE)  of the parameters may not exist. The non existence of the MLE has problematic consequences for inference, clearly for estimation, but also for testing and model selection.  \citet{FienbergRinaldo12:MLE_in_loglinear_models}  list the statistical implications of the non existence of the MLE, such as the unreliability of the estimates of some of the parameters or the usage of the wrong degrees of freedom for testing one model against another.
\citet{Geyer09:Likelihood_inference_in_exponential_families} describes the problems attached to the nonexistence of the MLE and presents an R program that yields meaningful confidence intervals and tests.  \citet{LetacMassam12:Bayes_regularization} study the statistical implications of the nonexistence of the MLE on model selection in Bayesian inference.

\citet{FienbergRinaldo12:MLE_in_loglinear_models} also give necessary and sufficient conditions for the existence of the MLE, which
 are not restricted to hierarchical models, but  apply to all discrete exponential families (loglinear models).
These conditions are extensions of results given earlier by
\citet{Haberman74:Analysis_of_Frequency_Data}, \citet{Barndorff:exponential_families} and
\citet{EFRS06:Polyhedral_conditions_MLE}.
They are essentially as follows.
Denote by $I$ the set of outcomes of a statistical experiment, that is the set of cells of the contingency table where the data is classified.
Let ${\cal I}_{+}=\{i_{1},\dots,i_{N}\}$  be the outcome of $N$ independent repetitions of the experiment within either a multinomial or Poisson setting.  The data ${\cal I}_{+}$ is summarized by the vector $t$ of sufficient statistics, which is of the form $t=\sum_{j=1}^Nf_{i_j}$  for  some vectors $f_{i}$, $i\in I$, determined by the given hierarchical loglinear model.
Under these assumptions, the distribution of the data belongs to a natural exponential family with density
\begin{equation}
  \label{basic}
  f(i_{1},\dots,i_{N}; \theta) =\exp \{\langle \theta, t\rangle-Nk(\theta)\}
\end{equation}
with respect to the counting measure, where $\theta$ is a loglinear parameter.
To each exponential family is associated a
polytope~$\Pbf$, called the marginal polytope, which is the convex hull of the vectors $f_{i}, i\in I$.
Furthermore, $\Pbf$ contains all possible realizations of $\frac t N$ for arbitrary repetitions
of the statistical experiment.  For given data and a given hierarchical model, the MLE then exists if and only if
$\frac tN$ belongs to the relative interior of~$\Pbf$.  If the MLE does not exist, then $\frac tN$ belongs to the
relative interior of a face denoted $\Fbf_{t}$. It is the smallest face of~$\Pbf$ containing $\frac tN$, and it is
proper (i.e.~$\Fbf_{t}\neq\Pbf$).  Thus, determining whether, for a given data set, the MLE of the parameter of a discrete hierarchical
loglinear model exists is equivalent to determining whether $\frac t N$ belongs to a proper face of~$\Pbf$. The parameter  may be  the loglinear parameter $\theta$, or  the cell probabilities $p=(p(i), i\in I)$ obeying the constraints of the model and in 1-1 correspondence with $\theta$. The MLE can thus be thought
of in terms of $\theta$, or in terms of $p$.

If the MLE does not exist, it is still possible to compute the extended MLE
(EMLE)~\citep{Barndorff:exponential_families,Lauritzen:Graphical_models,CsiszarMatus08:EMLE}, which is a probability
distribution that maximizes the likelihood over the closure of the hierarchical model (that is, the EMLE can be
approximated arbitrarily well by distributions from the hierarchical model).
The support of the EMLE is
given by the set $F_t=\{i\in I: f_{i}\in\Fbf_{t}\}$, called the facial set of $\Fbf_t$.  When this support is known, computing the EMLE is equivalent to an ordinary
MLE computation on a smaller exponential family~$\Ecal_{F_{t}}$, of dimension~$\dim(\Fbf_{t})$,
 generated by a measure with support~$F_t$ \citep{Geyer09:Likelihood_inference_in_exponential_families}. 
Therefore, precise knowledge of~$\Fbf_{t}$ and $F_t$ yields which is the proper dimension of the model to be used in testing and which outcomes $i\in I$ are attributed a probability of $0$ by the EMLE,
and it allows us to compute the EMLE. One should also note that the usual regularity conditions used for the asymptotic properties of the MLE, which are not satisfied for the given model when the MLE does not exist, are satisfied for the reduced model~$\Ecal_{F_{t}}$.

The problem is then to find~$\Fbf_{t}$.  
This is easy when the face lattice of $\Pbf$ is known or can be computed using a standard discrete geometry toolkit such as, for example, \texttt{polymake}~\citep{polymake}.
For some classes of marginal polytopes, the face lattice is known, for example for decomposable models and no-three-way-interaction models with small variables \citep{Vlach86:3D_planar_transportation_problem}.
For binary variables, the marginal polytope is a cut polytope \citep{DezaLaurent09:Cuts_and_metrics}.
Other authors have studied convex support polytopes, which replace marginal polytopes for more general exponential families.
Notably, many such polytopes have been described for exponential random graph models, see, for example, \cite{KarwaSlavkovic16:MLE_in_betamodels} and papers cited therein.
When the face lattice of $\Pbf$  cannot be computed, algorithms to compute $\Fbf_{t}$ that are based on linear programming have been proposed by \citet{EFRS06:Polyhedral_conditions_MLE}, by \citet{Geyer09:Likelihood_inference_in_exponential_families},
and by \citet{FienbergRinaldo12:MLE_in_loglinear_models}. 
These methods, however, become computationally infeasible in large dimensions, which happens, in our experience, for
hierarchical models when the set of random variables $V$ contains more than 16 binary variables (or correspondingly
fewer larger variables).

For larger models, \citet{MassamWang15:local_approach} propose to approximate $\Fbf_t$ by relating it to faces of
smaller hierarchical models as follows.  A hierarchical model for the discrete random variable $X=(X_v, v\in V)$ is
determined by a set of interactions among its components $X_v, v\in V,$ that is represented by a simplicial
complex~$\Delta$.  \citet{MassamWang15:local_approach} consider subsets $V_i,i=1,\ldots,k,$ of $V$ containing less than
16 variables and the hierarchical models Markov with respect to the induced simplicial
subcomplexes. 
Linear programming can be used to compute the smallest faces $\Fbf_{t_i}$ containing the corresponding sufficient
statistic $t_i, i=1,\ldots,k$.  These faces, which a priori are faces of the marginal polytopes of the submodels,
naturally correspond to faces of the original marginal polytope.  \citet{MassamWang15:local_approach} prove that the
intersection of these 
is a face $\Fbf_{2}$ of $\Pbf$ containing~$\Fbf_{t}$.  Thus, if $\Fbf_{2}$ is a proper face of~$\Pbf$, then $\Fbf_{t}$
is necessarily a proper face, and therefore the MLE does not exist.  While \citet{MassamWang15:local_approach} work with
graphical models, we show that their results are also true for hierarchical loglinear
models.

We call $\Fbf_{2}$ an \emph{outer approximation} to~$\Fbf_{t}$.
This is similar to the notion of an outer approximation in optimization, which describes a polytope that
  contains the original polytope of interest.  While the outer approximation polytope in optimization usually has the
  same dimension as the original polytope, the outer approximation face $\Fbf_{2}$ does not necessarily have the same
  dimension as~$\Fbf_{t}$.

The purpose of this paper is to add to this outer approximation ${\mathbf F}_2$ an inner approximation~${\mathbf F}_1$ that is a subset of ${\mathbf F}_t$.
While $\Fbf_{2}$ is derived from looking at simplicial subcomplexes of~$\Delta$, 
the inner approximation is constructed by enlarging the simplicial complex through added interactions.  In particular, we propose a process of ``completing a separator,'' which leads to a decomposable simplicial complex which, in turn, can be studied by looking at the sub-simplices  corresponding to its components with a small number of vertices in~$V$.  Thus, both $\Fbf_{1}$ and~$\Fbf_{2}$ can be obtained by computing facial sets on smaller hierarchical models involving fewer nodes.

The inner and outer approximations to $\Fbf_{t}$ satisfy
$${\mathbf F}_1\subset {\mathbf F}_t\subset {\mathbf F}_2.$$
Clearly, we want  ${\mathbf F}_1$ as large as possible and ${\mathbf F}_2$ as small as possible to have as much information about ${\mathbf F}_t$ as possible.
In our simulations, we observe that $\Fbf_{t}=\Fbf_{2}$ most of the time and that $\Fbf_{t}=\Fbf_{1}$ quite often.
The approximations $\Fbf_{1}$ and~$\Fbf_{2}$ allow to bound the dimension of~$\Fbf_{t}$, and thus knowledge from
$\Fbf_{1}$ and~$\Fbf_{2}$ can be taken into account whenever the dimension of $\Fbf_{t}$ plays a role, for example in
hypothesis testing.

When the MLE does not exist, even though the maximum likelihood procedure cannot be used to obtain a point estimate for the
parameter vector~$\theta$, some of its components $\theta_{j}$ may still be finite and well-defined in this
situation.  In Section~\ref{sec:parameter-estimation} we introduce a loglinear parametrization~$\mu$, different from
$\theta$, that allows to say precisely which parameter combinations have a finite well-defined limit and thus remain
meaningful for statistical inference.  Moreover, we demonstrate that even when $\Fbf_{t}$ is unknown, the
parametrization $\mu$ can be adjusted to incorporate knowledge that is available in the form of inner and outer
approximations $\Fbf_{1}$ and~$\Fbf_{2}$.

We extend the work of \citet{FienbergRinaldo12:MLE_in_loglinear_models} and that of \cite{Geyer09:Likelihood_inference_in_exponential_families} in several directions: first, we construct approximations to $\Fbf_{t}$ in high dimensions when a direct computation of $\Fbf_{t}$ is not feasible.  Second, we explicitly identify all parameter combinations that remain finite and meaningful when the MLE does not exist and $\Fbf_{t}$ is known, and we also discuss what can be said  when only approximations to $\Fbf_{t}$ are available.

\medskip

The remainder of this paper is organized as follows. In Section~\ref{sec:preliminaries}, we give preliminaries on hierarchical models, and faces and facial sets. Section~\ref{sec:facial-sets} contains the original methodology to obtain the approximations
$\Fbf_1$ and $\Fbf_2$.
In Section~\ref{sec:parameter-estimation}, we show how to use $\Fbf_1$ and $\Fbf_2$ to identify the parameters of the hierarchical models that can be estimated and those that cannot. 
In Section~\ref{sec:experiments}, we present  two examples.  A simulated data set is used to assess how often our approximations succeed to identify the true facial set~$F_{t}$.  The NLTCS data set, studied by \citet{DobraEroshevaFienberg03} and \citet{DobraLenkoski11:Copula_Gaussian_graphical_models}, illustrates how the outer approximation $\Fbf_2$ improves estimates of cell probabilities and log-linear parameters.
Both of these examples have 16 nodes. In Section~\ref{sec:large-graphs}, we discuss how to apply the methodology to larger models and how to use for inference the information that it yields.  Two examples illustrate this:
simulated data from the graphical model of the $5\times 10$ grid, and the real-world data set of voting records in the US Senate.

  Appendix~\ref{S-sec:notation-hier} describes the concrete parametrization that we use in the examples.
  Appendix~\ref{S-sec:two-binaries} discusses the case of two binary variables to illustrate what happens to the usual parameters when the MLE does not exist.
  Appendix~\ref{S-sec:best-parameters} discusses how to further improve the parametrization $\mu_{L}$ introduced in Section~\ref{sec:mus-0}.
  Appendices~\ref{S-sec:uniform-4x4} and \ref{S-sec:NLTCS-frequencies} give further results for the examples from Section~\ref{sec:experiments}.
  Appendix~\ref{S-linear programming} briefly summarizes the linear programming algorithm to compute~$\Fbf_{t}$ by \citet{FienbergRinaldo12:MLE_in_loglinear_models}.
  Appendix~\ref{sec:S-proofs} contains proofs of the results of \citet{Barndorff:exponential_families} that describe the closure of an exponential family and the EMLE.
  
Our results apply not only to hierarchical models, but to arbitrary discrete exponential families.  In this paper, the focus is on
hierarchical and graphical models, which are the main application,  and for which the construction
of the inner and outer approximations can be described in terms of the underlying simplicial complex or graph.

\section{Preliminaries}
\label{sec:preliminaries}
\label{sec:mus-0}

In the following four subsections, 
we recall basic facts about hierarchical models, discrete exponential families, polytopes and the closure of
exponential families, and we define the extended MLE.

\subsection{Hierarchical models and discrete exponential families}
\label{sec:hierarchical-models}
\label{sec:discr-expon-famil}

For details and proofs on the material in this subsection, we refer to \citet{LetacMassam12:Bayes_regularization}
and~\citet{RKA10:Support_Sets_and_Or_Mat}.  Let $X=(X_v,v\in V)$ be a discrete random vector with components indexed by
a finite set $V=\{1,\ldots,p\}$.  Each variable $X_v$ takes values in a finite set $I_v, v\in V$.  The vector $X$ takes
its values in
$
I=\prod_{v\in V}I_v$, 
the set of cells $i=(i_v, v\in V)$ of a $p$-dimensional contingency table.
For any $D\subseteq V$, the subvector $X_{D}=(X_{v})_{v\in D}$ takes its values in $I_{D}=\prod_{v\in D}I_{v}$.
The $D$-marginal cell of $i\in I$ will be denoted by $i_{D}=(i_{v})_{v\in D}$.
The corresponding restriction is the coordinate projection map $i\mapsto i_{D}$ and is denoted by~$\pi_{D}$.

Let ${\Delta}$ be a simplicial complex on $V$, that is, $\Delta$ is a set of subsets $D\subset V$  such that $D\in \Delta$ and $D'\subset D$ implies $D'\in \Delta$.
The joint distribution of $X$ is \emph{hierarchical} with underlying simplicial complex ${\Delta}$ (or generating set $\Delta$)
if the probability $p(i)=P(X=i)$ of a single cell $i=(i_v, v\in V)$ is of the form
\begin{equation}
  \label{H2prime-redundant}
  \log p(i)=\sum_{D\in \Delta} \theta_D(i_D)
\end{equation}
where $\theta_D(i_D)$ is a function of the marginal cell $i_D=(i_v, v\in D)$ only.  To make precise the dependence on~$\theta$, we sometimes write~$p_{\theta}(i)$ instead of~$p(i)$.  The set of all such distributions $\Ecal_{\Delta}:=\{p_{\theta}\}$ is called the \emph{hierarchical model} of~$\Delta$.

Equation~\eqref{H2prime-redundant} is  essentially a linear condition on $\log p(i)$.  It is possible to parametrize the hierarchical model using a finite vector of parameters $(\theta_{j})_{j\in J}$ such that
\begin{equation}
  \label{eq:log-linear}
  \log p_{\theta}(i) = \sum_{j\in J}\theta_{j} a_{j,i} - k(\theta),
\end{equation}
where $A_{\Delta}=(a_{j,i})_{j\in J,i\in I}$ is a fixed real matrix (depending only on $\Delta$) and where
$k(\theta) = \log \sum_{i}\exp(\sum_{i}\theta_{j}a_{j,i})$ ensures the normalization
$\sum_{i\in I}p_{\theta}(i) = 1$.  This parametrization is not unique.  In the examples, we use an explicit
parametrization that is used, for example, by~\cite{LetacMassam12:Bayes_regularization}.  For convenience, we recall
this parametrization in Appendix~\ref{S-sec:notation-hier}.

An important subclass of hierarchical models is the class of graphical models. Let $G=(V,E)$ be an undirected graph with
vertex set $V$ and edge set~$E$.  A subset $D\subseteq V$ is a clique of $G$ if for any $i,j\in V$, $i\neq j$, the
edge $(i,j)$ is in $E$.  The set of cliques of~$G$, denoted by~$\Delta(G)$, is a simplicial complex.  The \emph{graphical
  model} of $G$ is defined as the hierarchical model of~$\Delta(G)$.  Graphical models are important because of their
interpretation in terms of conditional independence, see~\cite{Lauritzen:Graphical_models}.



\medskip

Hierarchical models are examples of discrete exponential families, see
\citet{Barndorff:exponential_families,Fienberg:categorical,RKA10:Support_Sets_and_Or_Mat}.
Generalizing~\eqref{eq:log-linear}, let $I$ and $J$ be finite sets and let $A\in\R^{J\times I}$ be a real
matrix.  
Denote the columns of $A$ by $f_{i}$, $i\in I$.  The discrete exponential family corresponding to~$A$, denoted
by~$\Ecal_{A}$, consists of all probability distributions on~$I$ that are of the form
\begin{equation}
\label{pofi}
  p_{\theta}(i) = \exp\big\{\langle \theta,f_{i}\rangle - k(\theta)\big\}
  = \exp\big\{(A^{t}\theta)_{i} - k(\theta)\big\}, \qquad \theta\in\R^{J},
\end{equation}
where, as above, $k(\theta) = \log \sum_{i}\exp(\sum_{j}\theta_{j}a_{j,i})$.
It is convenient to write $\tilde A$ for the $(1+|J|)\times I$ matrix with columns equal to $\binom{1}{f_{i}}$, $i\in I$, and to set $\theta_{0}:= -k(\theta)$ and $\tilde{\theta}=(\theta_{0}, \theta)$ (as a column vector).  Then~\eqref{pofi} rewrites to
\begin{equation}
  \label{eq:pofi2}
  p_{\theta}(i) = \exp\big(\tilde A^{t}\tilde\theta\big), \qquad \theta\in\R^{J}.
\end{equation}
Both $A$ and $\tilde A$ are called \emph{design matrices} of the model.
The convex hull of the columns $f_{i}$, $i\in I$, is called
the \emph{convex support polytope}, denoted by~$\Pbf_{A}$.  
In the case of a graphical or hierarchical model, $\Pbf_{\Delta}:=\Pbf_{A_{\Delta}}$ is called a \emph{marginal polytope}.

The parametrization $\theta\to p_{\theta}$ is identifiable if and only if $\tilde A$ has full rank.
If $\tilde A$ does not have full rank, then one can drop rows of~$A$ to obtain a submatrix~$A'$ such
that~$\tilde A'$ has full rank.  This is equivalent to setting certain parameters to zero until the remaining parameters
are identifiable. 

Later, the following reparametrization will be useful: select an element of~$I$, which we will denote by $0$.  Let
$A_{0}$ be the matrix with columns $f_{i}-f_{0}$, $i\in I\setminus\{0\}$.  It is not difficult to see that $A$ and
$A_{0}$ define the same exponential family (since $\tilde A$ and $\tilde A_{0}$ have the same row span).  Let
$h'=\rank(A_{0})=\rank(\tilde A_{0})-1$, and select a set $L$ of $h'$ linearly independent vectors among the columns
of~$A_{0}$.
For $i\in L$, let $\mu_{i}=\mu_{i}(\theta):=\langle\theta,f_{i}-f_{0}\rangle$, and let $\mu_{L}=(\mu_{i},i\in L)$.
Then the $\mu_{L}$ are identifiable parameters on~$\Ecal_{A}$: in fact, their number is equal
to~$h'$, and they are independent by construction.

It is possible to extend the definition of $\mu_{i}(\theta)$ to all $i\in I$.  Note that only the parameters $\mu_{i}$
with $i\in L$ are free parameters, while the parameters $\mu_{i}$ with $i\in I\setminus L$ are linear functions
of~$\mu_{L}$.
The $\mu_{i}$ can be interpreted as log-likelihood ratios:
\begin{equation*}
  \mu_{i}(\theta)=\log \frac{p_{\theta}(i)}{p_{\theta}(0)},
  \qquad
  \mu_{0}(\theta)=0.
\end{equation*}


Let $n=(n(i),i\in I)$ be an $I$-di\-men\-sional column vector of cell counts summarizing the outcome of a statistical experiment.
Then
\begin{equation}
  \label{marginal}
  \tilde{A}n=\left(\begin{array}{c}N\\t\end{array}\right)
  \quad\text{ and }\quad
  A n= t,
\end{equation}
where $N=\sum_{i\in I}n(i)$ is the total cell counts and $t$ is the column vector of \emph{sufficient statistic}.  The
likelihood $\prod_{i\in I}p_{\theta}(i)^{n(i)}$ can be written under the form of a natural exponential family.  Indeed,
\begin{equation*}
  \prod_{i\in I}p_{\theta}(i)^{n(i)}
  = \exp\big( \big\langle \tilde{A}n, \tilde{\theta}\big\rangle\big) = \exp\big\{\sum_{j\in J}\theta_jt_j - N k(\theta)\big\}.
\end{equation*}
The log-likelihood function for the loglinear parameters $\theta$ of $\Ecal_{A}$ is therefore
\begin{equation}
  \label{lik-theta}
  l(\theta)=
  \sum_{j\in J}\theta_jt_j-N k(\theta) .
\end{equation}
It is well-known that $l(\theta)$ is concave.  If the parameters are identifiable, then it is strictly concave.
We can also express the log-likelihood as a function 
of $\mu=(\mu_i, i\in I)$:
\begin{equation}
  l(\mu ) = \sum_{i\in I}n(i)\log p(i) = \sum_{i\in I}n(i)\mu_i-N\log (\sum_{i\in I}\exp \mu_i)    \label{lik-mu}.
\end{equation}
As stated before, only a subset $\mu_{L}$ of the parameters $\mu$ are independent, and the remaining $\mu_{i}$, $i\notin
L$, can be expressed as linear functions of~$\mu_{L}$.

\subsection{The convex support and its facial sets}
\label{sec:polytopes}

We next recall some facts about facial sets.  We refer to \citet{Ziegler98:Lectures_on_Polytopes} for a general
introduction to polytopes and their face lattices.




The convex support polytope $\Pbf_{A}$ is defined as the convex hull of a finite number of points~$f_{i}$, $i\in I$.  It
is of interest to know which subsets of $\{f_{i}\}_{i\in I}$ lie on a given face~$\Fbf$.
Thus, we describe a face $\Fbf$ by identifying the corresponding \emph{facial set} $F = \{i\in I: f_{i}\in\Fbf\}$.  For
any subset~$S\subseteq I$, denote by~$\face_{A}(S)$ the smallest facial set that contains~$S$.  The intersection of
facial sets is again facial, and so $\face_{A}(S)$ is well-defined.  When $\Pbf_{A}=\Pbf_{\Delta}$ is a marginal
polytope, we abbreviate $\face_{A_\Delta}(S)$ by~$\face_\Delta(S)$.

As mentioned in the introduction, to derive the inner approximation $\Fbf_1$ to $\Fbf_t$ and its outer approximation
$\Fbf_2$, we need to consider sub-models of a given model.
When one exponential family $\Ecal_{A'}$ is a subset of another family~$\Ecal_{A}$, then the convex support polytope
$\Pbf_{A'}$ is a linear projection of~$\Pbf_{A}$, and the columns $f'_{i}$ of $A'$ are indexed by the same set $I$ as
the columns $f_{i}$ of~$A$.  Since inverses of linear projections preserve faces, it follows from basic results about
polytopes that $\face_{A}(S)\subseteq\face_{A'}(S)$; see Chapter~1 in~\citet{Ziegler98:Lectures_on_Polytopes}.  For
hierarchical models, these facts are summarized in the following result:

\begin{lemma}
  \label{lem:sub-complex}
  Let $\Delta$ and $\Delta'$ be simplicial complexes on the same vertex set with $\Delta'\subseteq\Delta$.
  Then $\Pbf_{\Delta'}$ is a coordinate projection of~$\Pbf_{\Delta}$.  The inverse image of any face of $\Pbf'$ is a face of $\Pbf$. Moreover, for any $S\subseteq I$, we have
  $\face_\Delta(S)\subseteq\face_{\Delta'}(S)$.
\end{lemma}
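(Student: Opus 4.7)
The plan is to establish the three assertions in order, exploiting the fact that $\Pbf_{\Delta'}$ sits under $\Pbf_{\Delta}$ as a coordinate projection, and then using the standard polyhedral fact that faces pull back under linear maps.

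First, using the redundant parametrization in~\eqref{H2prime-redundant}, I would take the rows of the design matrix $A_{\Delta}$ to be indexed by pairs $(D,i_{D})$ with $D\in\Delta$ and $i_{D}\in I_{D}$, so that the column $f_{i}$ has a $1$ in position $(D,i_{D})$ precisely when $\pi_{D}(i)=i_{D}$ and a $0$ otherwise. Because $\Delta'\subseteq\Delta$, the row index set for $A_{\Delta'}$ is a subset of the one for $A_{\Delta}$, and each column $f'_{i}$ of $A_{\Delta'}$ is obtained from $f_{i}$ by deleting the coordinates indexed by simplices in $\Delta\setminus\Delta'$. Let $\pi$ denote this coordinate projection. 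Then $\pi(f_{i})=f'_{i}$ for every $i\in I$, and taking convex hulls gives $\Pbf_{\Delta'}=\pi(\Pbf_{\Delta})$. (If one prefers an identifiable parametrization, the same argument applies after choosing compatible non-redundant subsets of rows for $\Delta'$ and~$\Delta$.)

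For the second assertion I would invoke the general polyhedral fact (see Chapter~1 of \citet{Ziegler98:Lectures_on_Polytopes}) that if $\Fbf'$ is a face of $\pi(\Pbf_{\Delta})$ exposed by a linear functional $x\mapsto\langle c',x\rangle$, then $\pi^{-1}(\Fbf')\cap\Pbf_{\Delta}$ is precisely the set of maximizers on $\Pbf_{\Delta}$ of the pulled-back functional $y\mapsto\langle c',\pi(y)\rangle$, and is therefore a face of~$\Pbf_{\Delta}$. This step uses nothing from the hierarchical structure, only the definition of a face as an argmax set.

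Finally, for the third assertion, let $\Fbf'_{S}$ be the smallest face of $\Pbf_{\Delta'}$ whose facial set contains $S$, so that its facial set equals $\face_{\Delta'}(S)$. By the previous step, $\Fbf_{S}:=\pi^{-1}(\Fbf'_{S})\cap\Pbf_{\Delta}$ is a face of~$\Pbf_{\Delta}$, and for every $i\in S$ the identity $\pi(f_{i})=f'_{i}\in\Fbf'_{S}$ yields $f_{i}\in\Fbf_{S}$. Minimality of $\face_{\Delta}(S)$ then gives
$$
\face_{\Delta}(S)\subseteq\{i\in I:f_{i}\in\Fbf_{S}\}=\{i\in I:\pi(f_{i})\in\Fbf'_{S}\}=\face_{\Delta'}(S).
$$
The only real obstacle is bookkeeping: one must align the row indexings of $A_{\Delta}$ and $A_{\Delta'}$ so that $\pi$ is genuinely a coordinate projection. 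With the redundant parametrization this is automatic, so the lemma reduces to a routine application of general polytope theory.
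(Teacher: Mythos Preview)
Your proof is correct and follows the same approach as the paper. The paper does not give a detailed proof of this lemma; it simply notes in the preceding paragraph that $\Pbf_{A'}$ is a linear projection of~$\Pbf_{A}$, that inverses of linear projections preserve faces, and refers to Chapter~1 of \citet{Ziegler98:Lectures_on_Polytopes}---exactly the route you take, only you have filled in the bookkeeping for the coordinate projection and the minimality argument for the facial-set containment.
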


\begin{remark}
  \label{homog}
  It is convenient to embed $\Pbf_{A}$ in a vector space with one additional dimension using a map
  $\R^{h}\to\R^{h+1}, t\mapsto\tilde t:=(1,t)$.  This has the advantage that all defining inequalities are brought
  into a homogeneous form with vanishing constant: note that
  $\langle g, f_{i}\rangle - c = \langle \tilde g_{c}, \tilde f_{i}\rangle$, where $\tilde g_{c} := (-c,g)$.

  When a defining inequality of a face $\Fbf$ is given, its facial set~$F$ can be obtained by checking whether
  $f_{i}\in\Fbf$ for each~$i\in I$.  In the other direction, when a facial set $F$ is given, it is much more difficult
  to compute a defining inequality of the corresponding face~$\Fbf$.  However, it is straightforward to compute the
  linear equations defining~$\Fbf$: the set of such equations $0 = \<g,x\> - c = \<\tilde g,\tilde x\>$ corresponds to
  the set of vectors~$\tilde g\in\ker\tilde A_{F}^{t}$, where $\tilde A_{F}$ is the matrix obtained from~$\tilde A$ by
  dropping the columns not in~$F$.
\end{remark}

\subsection{The closure of an exponential family and existence of the MLE}

For a family $\Ecal_{A}$ and cell counts $n=(n(i): i\in I)$ given as above, a parameter value $\theta^{*}$ is an MLE if it is a global maximum of~$l(\theta)$.
An MLE need not exist, since the domain of the parameters $\theta$ is unbounded.  The likelihood can also be written as a function of cell probabilities.  For any probability
distribution $p$ on~$I$ let
\begin{equation*}
  \tilde l(p) = \log\{\prod\nolimits_{i\in I}p(i)^{n(i)}\}.
\end{equation*}
Then $l(\theta) = \tilde l(p_{\theta})$, and $\theta^{*}$ is an MLE if and only if $p_{\theta^{*}}$ maximizes $\tilde l$
subject to the constraint that $p$ belongs to~$\Ecal_{A}$, i.e. is of the form~\eqref{pofi}.
When $\tilde l$ has no maximum on~$\Ecal_{A}$, we can pass to the topological closure~$\overline{\Ecal_{A}}$.
It can be characterized in terms of the convex support polytope $\Pbf_{A}$ and its facial sets
as follows:
\begin{theorem}[\citet{Barndorff:exponential_families}]
  \label{thm:closure-of-expfam}
   The topological closure of $\Ecal_{A}$ is $\overline{\Ecal_{A}} = \bigcup_{F}\Ecal_{F,A}$,
   where $F$ runs over all facial sets of 
   $\Pbf_{A}$ and where $\Ecal_{F,A}$ consists of all probability distributions of the form
  $p_{F,\theta}$, with
  \begin{equation}
    \label{eq:p-f-theta}
    p_{F,\theta}(i) =
    \begin{cases}
      \exp(\langle \theta, f_i\rangle  - k_{F}(\theta)), & \text{ if }i\in F, \\
      0, & \text{ otherwise},
    \end{cases}
  \end{equation}
  where $k_{F}(\theta) = \log \sum_{i\in F}\exp(\langle \theta, f_i\rangle)$.
\end{theorem}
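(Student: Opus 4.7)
The plan is to prove the two inclusions $\bigcup_F \Ecal_{F,A} \subseteq \overline{\Ecal_A}$ and $\overline{\Ecal_A} \subseteq \bigcup_F \Ecal_{F,A}$ separately. Since $I$ is finite, the topology on probability distributions on $I$ is pointwise convergence of mass functions, so everything reduces to controlling sequences $p_{\theta_n}$ coordinate-wise.

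For the $\supseteq$ inclusion, I would use the supporting-hyperplane description of faces from Remark~\ref{homog}. Given a facial set $F$, there exist $g\in\R^J$ and $c\in\R$ with $\langle g, f_i\rangle = c$ for $i\in F$ and $\langle g, f_i\rangle < c$ for $i\notin F$. For arbitrary fixed $\theta\in\R^J$ and $n\in\N$, set $\theta_n := \theta + ng$. A direct computation shows that the factors $e^{nc}$ cancel between the numerator and the $F$-part of the normalization, while the contributions indexed by $i\notin F$ decay like $e^{n(\langle g,f_i\rangle - c)}\to 0$. Hence $p_{\theta_n}(i)\to p_{F,\theta}(i)$ for every $i\in I$, which proves $p_{F,\theta}\in\overline{\Ecal_A}$.

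For the $\subseteq$ inclusion, suppose $p_{\theta_n}\to q$ pointwise; the goal is to show $q = p_{F,\theta^*}$ for some facial set~$F$ and some $\theta^*$. Proceed by induction on $|I|$. Working in an identifiable parametrization, if $(\theta_n)$ is bounded then a convergent subsequence gives $q \in \Ecal_A = \Ecal_{I,A}$ directly. Otherwise, pass to a subsequence with $\|\theta_n\|\to\infty$ and $\theta_n/\|\theta_n\|\to u$ for some unit vector~$u$. Let $c^* := \max_i \langle u, f_i\rangle$ and $F^* := \{i : \langle u, f_i\rangle = c^*\}$; then $F^*$ is an exposed facial set of~$\Pbf_A$, being the set of maximizers of a linear functional on $\{f_i\}_{i\in I}$. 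For any $i\notin F^*$ and any $i'\in F^*$, the ratio $p_{\theta_n}(i)/p_{\theta_n}(i')$ is of order $\exp\bigl(\|\theta_n\|(\langle u, f_i\rangle - c^*) + O(1)\bigr)\to 0$, so $\supp(q)\subseteq F^*$.

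To identify $q$ on $F^*$, observe that $\langle u, f_i\rangle = c^*$ for every $i\in F^*$, so the component of $\theta_n$ along $u$ contributes the same factor to each $p_{\theta_n}(i)$, $i\in F^*$; consequently the conditional distributions $p_{\theta_n}(\,\cdot\mid F^*)$ are genuine members of the smaller exponential family $\Ecal_{F^*,A}$ (built from the submatrix with columns $f_i$, $i\in F^*$), and they converge to~$q$. Applying the induction hypothesis to $\Ecal_{F^*,A}$, whose ground set $F^*\subsetneq I$ is strictly smaller, yields $q = p_{F,\theta^*}$ for some facial set $F\subseteq F^*$ of the sub-polytope $\conv\{f_i:i\in F^*\}$; since a face of a face of $\Pbf_A$ is itself a face of $\Pbf_A$, such an $F$ is a facial set of~$\Pbf_A$. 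The main obstacle is the bookkeeping in this recursion: one must check that identifiability allows the reduction $u\ne 0$ to force $F^*\subsetneq I$ strictly (otherwise the induction does not decrease $|I|$), and that facial sets of $\conv\{f_i: i\in F^*\}$ really correspond to facial sets of $\Pbf_A$ contained in~$F^*$, so that the inductive conclusion transfers back cleanly to the original polytope.
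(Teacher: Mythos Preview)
Your argument is correct, but the route for the $\subseteq$ inclusion is genuinely different from the paper's. For $\supseteq$ you and the paper do essentially the same thing: push $\theta$ along a supporting-hyperplane normal; the paper splits this into first reaching the uniform distribution on~$F$ and then translating within~$\Ecal_{F,A}$, while you go directly to $p_{F,\theta}$ via $\theta_n=\theta+ng$.

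For $\subseteq$, the paper avoids induction entirely. It uses the algebraic characterization $p\in\Ecal_A\iff\log p\perp\ker A$ to argue in two independent steps: (i)~for any limit $p=\lim p_k$, every $v\in\ker A_{\supp(p)}$ extends by zeros to $\ker A$, so $\langle\log p_k,v\rangle=0$ passes to the limit and $p\in\Ecal_{\supp(p),A}$; (ii)~if $\supp(p)$ were not facial, one constructs a vector $v\in\ker A$ with strictly negative entries on $\face_A(\supp(p))\setminus\supp(p)$, and the orthogonality $\log p_k\perp v$ forces a contradiction as those coordinates of $\log p_k$ go to~$-\infty$. Your approach instead extracts a recession direction $u=\lim\theta_n/\|\theta_n\|$, identifies the exposed facial set $F^*=\arg\max_i\langle u,f_i\rangle$, and recurses on~$F^*$. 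The bookkeeping you flag is real but routine: at each inductive step you must pass to an identifiable parametrization of $\Ecal_{F^*,A}$ (dropping redundant rows of~$A_{F^*}$) so that an unbounded parameter sequence again forces $F^{**}\subsetneq F^*$ strictly; and ``face of a face is a face'' is standard. The paper's kernel argument is shorter and yields $\supp(p)$ as the facial set in one shot, while your recession-direction argument is more geometric and makes the mechanism of convergence to the boundary explicit.
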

\begin{proof}
  See~\cite{Barndorff:exponential_families}.
  For self-containedness we provide a proof in our notation in Appendix~\ref{S-sec:proof1}.
\end{proof}

Thus, $\ol{\Ecal_{A}}$ is a finite union of sets $\Ecal_{F,A}$ that are exponential families themselves with a very
similar parametrization, using the same number of parameters.  The design matrix of $\Ecal_{F,A}$ is the submatrix
$A_{F}$ of $A$ consisting of the columns indexed by~$F$.  However, for any proper facial set~$F\neq I$, the
parametrization $\theta\mapsto p_{F,\theta}$ is never identifiable since all
columns of $A_{F}$ lie on a supporting hyperplane defining~$F$ and thus $\tilde A_{F}$ never has full rank.

Although the parameters $\theta$ on $\Ecal_{A}$ and the parameters $\theta$ on
$\Ecal_{F,A}$ play similar roles, they are very different in the following sense: if $\theta^{(s)}$ is a sequence of
parameters with $p_{\theta^{(s)}}\to p_{F,\theta}$ for some~$\theta$, then, in general,
$\lim_{s\to\infty}\theta^{(s)}_{j}\neq\theta_{j}$ for all~$j\in J$.
\begin{theorem}[\cite{Barndorff:exponential_families}]
  \label{thm:gmle}
  For any vector of observed counts~$n$, there is a unique maximum $p^{*}$ of~$\tilde l$ in~$\overline{\Ecal_{A}}$.
  This maximum $p^{*}$ satisfies:
  (1) $A p^{*} = \frac tN$, where $t = An$,
  (2) $\supp(p^{*}) = F_{t}$,
  (3) $p^{*}\in\Ecal_{F_{t},A}$.
\end{theorem}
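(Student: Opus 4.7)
The plan is to establish all three assertions simultaneously by combining compactness, the stratification of $\overline{\Ecal_{A}}$ provided by Theorem~\ref{thm:closure-of-expfam}, and first-order optimality within each stratum.

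First, I would establish existence of a maximizer. The closure $\overline{\Ecal_{A}}$ is a closed subset of the probability simplex on~$I$, hence compact, and $\tilde l(p)=\sum_{i\in I}n(i)\log p(i)$ (with the convention $0\log 0=0$) is upper semi-continuous on it and not identically $-\infty$, since $\Ecal_{A}$ itself contains distributions of full support. Therefore $\tilde l$ attains its maximum on $\overline{\Ecal_{A}}$; let $p^{*}$ be any maximizer. By Theorem~\ref{thm:closure-of-expfam}, $p^{*}\in\Ecal_{F,A}$ for some facial set~$F$, and since every element of $\Ecal_{F,A}$ has support exactly~$F$, this forces $\supp(p^{*})=F$; the fact that $\tilde l(p^{*})>-\infty$ also implies $\supp(n)\subseteq F$.

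Next, I would derive the moment equation~(1) from first-order optimality inside the stratum. Restricted to $\Ecal_{F,A}$, the log-likelihood becomes the ordinary exponential-family log-likelihood $\theta\mapsto \sum_{i\in F}n(i)\langle\theta,f_{i}\rangle - Nk_{F}(\theta)$, maximized by~$p^{*}$. Setting the gradient in $\theta$ to zero and using $\nabla k_{F}(\theta)=\sum_{i\in F}p_{F,\theta}(i)f_{i}$ gives $\sum_{i\in F}p^{*}(i)f_{i}=t/N$; since $p^{*}$ vanishes off $F$, this is exactly $Ap^{*}=t/N$, which is~(1). The same identity exhibits $t/N$ as a convex combination of columns of $A$ indexed by~$F$, so $t/N$ lies in the face $\Fbf$ of $\Pbf_{A}$ corresponding to~$F$, and minimality of $\Fbf_{t}$ then gives $F_{t}\subseteq F$.

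The main obstacle will be the reverse inclusion $F\subseteq F_{t}$ together with uniqueness of~$p^{*}$. For this I would use the classical mean-parameter theorem for exponential families applied to $\Ecal_{F,A}$, whose convex support polytope is exactly~$\Fbf$: after passing to an identifiable reparametrization, the mean-parameter map sends $\Ecal_{F,A}$ bijectively onto the \emph{relative interior} of~$\Fbf$. Consequently, the existence of $p^{*}\in\Ecal_{F,A}$ with $Ap^{*}=t/N$ forces $t/N$ to lie in the relative interior of~$\Fbf$. By the defining minimality of $\Fbf_{t}$, this yields $\Fbf=\Fbf_{t}$ and hence $F=F_{t}$, proving~(2) and~(3). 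Uniqueness of $p^{*}$ then follows from the fact that the moment equation $A_{F_{t}}p=t/N$ admits at most one solution inside $\Ecal_{F_{t},A}$, by strict concavity of the log-likelihood after reducing to an identifiable parametrization.
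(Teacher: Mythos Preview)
Your proof is correct and follows the same architecture as the paper's: locate $p^{*}$ in some stratum $\Ecal_{F,A}$ via Theorem~\ref{thm:closure-of-expfam}, obtain the moment equation $Ap^{*}=t/N$ from first-order conditions there, deduce $F_{t}\subseteq F$, then argue the reverse inclusion and uniqueness by strict concavity on the stratum. The one substantive difference is the step $F\subseteq F_{t}$: you invoke the classical mean-parameter theorem to place $t/N$ in the relative interior of~$\Fbf$, whereas the paper gives a self-contained supporting-hyperplane computation (if $i\in\supp(p^{*})\setminus F_{t}$, take a defining inequality $\langle g,\cdot\rangle\ge c$ for $\Fbf_{t}$ and compute $\langle g,Ap^{*}\rangle>c=\langle g,t/N\rangle$, contradicting~(1)). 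Your route is cleaner if one grants the mean-parameter theorem; the paper's is more elementary and keeps everything internal. Your explicit compactness argument for existence is also an improvement over the paper, which tacitly assumes a maximizer.
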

\begin{proof}
  See~\cite{Barndorff:exponential_families}.
  For self-containedness we provide a proof in our notation in Appendix~\ref{S-sec:proof2}.
\end{proof}
  The maximum $p^{*}$ in Theorem~\ref{thm:gmle} is called the \emph{extended} maximum likelihood estimate (EMLE).
By Theorem~\ref{thm:gmle}, when $F_{t}$ is known, the EMLE $p^{*}$ can be computed by computing the MLE
on~$\Ecal_{F_{t},A}$.  If the MLE $\theta^{*}$ exists, then $p^{*}=p_{\theta^{*}}$.

\subsection{Decomposable models}
\label{sec:reducible-models}

Computing $\Fbf_{t}$ or finding an approximation is easier when the simplicial complex $\Delta$ of the model is
decomposable.  We need the following definitions.

Let $V'\subset V$.  The \emph{restriction} or \emph{induced subcomplex} to $V'$ is
$\Delta|_{V'}=\{ S\in\Delta\mid S\subseteq V'\}$.  The subcomplex $\Delta|_{V'}$ is \emph{complete}, if $\Delta|_{V'}$
contains~$V'$ (and thus all subsets of~$V'$).  In this case, we also say that $V'$ is \emph{complete} in~$\Delta$.

A subset $S\subset V$ is a \emph{separator} of $\Delta$ if there exist $V_{1},V_{2}\subset V$ with $V_{1}\cap V_{2}=S$,
$\Delta=\Delta|_{V_{1}}\cup\Delta|_{V_{2}}$ and $V_{1}\neq S\neq V_{2}$.
A simplicial complex that has a complete separator is called \emph{reducible}. By extension, we also call the
hierarchical model reducible.

A hierarchical model is \emph{decomposable} if its generating set is a union
$\Delta=\Delta_{1}\cup\Delta_{2}\cup\dots\Delta_{r}$ of induced sub-complexes $\Delta_{i}=\Delta|_{V_{i}}$ in such a way
that
\begin{enumerate}
\item each $\Delta_{i}$ is a complete simplex: $\Delta_i = \{ S \subseteq V_{i}\}$; and
\item $(\Delta_{1}\cup\dots\cup\Delta_{i})\cap\Delta_{i+1}$ is a complete simplex.
\end{enumerate}
In other words, $\Delta$ arises by iteratively gluing simplices along complete sub-simplices.

Lemma~\ref{lem:reducible-facial} below states that, if $\Delta$ is reducible, then any facial set for $\Delta$ is the
intersection of the preimage of facial sets for its components.  It is a simple reformulation of Lemma~8 in
\citep{EFRS06:Polyhedral_conditions_MLE}.

\begin{lemma}
  \label{lem:reducible-facial}
  Let $\Delta$ be reducible into two components  $\Delta|_{V_{1}}$ and~$\Delta|_{V_{2}}$.
  \begin{enumerate}
  \item If $F\subseteq I$ is facial with respect to~$\Delta$, then $\pi_{V_{1}}(F)$ and $\pi_{V_{2}}(F)$ are facial with
    respect to~$\Delta|_{V_{1}}$ and~$\Delta|_{V_{2}}$.
  \item Conversely, if $F_{1}\subseteq I_{V_{1}}$ and $F_{2}\subseteq I_{V_{2}}$ are facial with respect
    to~$\Delta|_{V_{1}}$ and~$\Delta|_{V_{2}}$, then $\pi_{V_{1}}^{-1}(F_{1})\cap\pi_{V_{2}}^{-1}(F_{2})$ is facial with
    respect to~$\Delta$.
  \end{enumerate}
  Thus, for any $T\subseteq I$, let $T_{1}=\pi_{V_{1}}(T)$ and~$T_{2}=\pi_{V_{2}}(T)$.  Then
    \begin{equation*}
      \face_{\Delta}(T) = \pi_{V_{1}}^{-1}(\face_{\Delta|_{V_{1}}}(T_{1})) \cap \pi_{V_{2}}^{-1}(\face_{\Delta|_{V_{2}}}(T_{2})).
    \end{equation*}
\end{lemma}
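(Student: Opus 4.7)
The plan is to characterize facial sets as supports of distributions in the closure of the hierarchical model and to exploit the factorization that the reducibility of~$\Delta$ induces. By Theorem~\ref{thm:closure-of-expfam}, $F\subseteq I$ is facial for~$\Delta$ if and only if $F=\supp(p)$ for some~$p\in\ol{\Ecal_{\Delta}}$, and likewise for each~$\Delta|_{V_{j}}$. Since $\Delta=\Delta|_{V_{1}}\cup\Delta|_{V_{2}}$ and $S:=V_{1}\cap V_{2}$ is complete in~$\Delta$, the log-density $\sum_{D\in\Delta}\theta_{D}(i_{D})$ splits as $\theta_{1}^{*}(i_{V_{1}})+\theta_{2}^{*}(i_{V_{2}})-k(\theta)$, each piece gathering the hierarchical terms indexed by simplices contained in~$V_{1}$ or~$V_{2}$. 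A short marginalization computation then yields the classical factorization
\begin{equation*}
  p(i)=\frac{p_{V_{1}}(i_{V_{1}})\,p_{V_{2}}(i_{V_{2}})}{p_{S}(i_{S})},\qquad p_{V_{j}}\in\Ecal_{\Delta|_{V_{j}}},
\end{equation*}
where each $V_{j}$-marginal remains hierarchical because the $i_{S}$-dependent normalization factor can be absorbed into the $S$-term (available since $S\in\Delta|_{V_{j}}$). Continuity of marginalization extends this identity to $\ol{\Ecal_{\Delta}}$ with the convention $0/0=0$, so $p_{V_{j}}\in\ol{\Ecal_{\Delta|_{V_{j}}}}$ and
\begin{equation*}
  \supp(p)=\pi_{V_{1}}^{-1}(\supp(p_{V_{1}}))\cap\pi_{V_{2}}^{-1}(\supp(p_{V_{2}})).
\end{equation*}

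Part~(1) then follows at once: choose $p\in\ol{\Ecal_{\Delta}}$ with $\supp(p)=F$; its marginals witness that $\pi_{V_{j}}(F)=\supp(p_{V_{j}})$ is facial for~$\Delta|_{V_{j}}$. For Part~(2) I would argue at the polytope level via Remark~\ref{homog}: a facial set $F_{j}$ of~$\Pbf_{\Delta|_{V_{j}}}$ is defined by a supporting functional of hierarchical form $\psi_{j}(i_{V_{j}})=\sum_{D\in\Delta|_{V_{j}}}\psi_{j,D}(i_{D})\le 0$ with zero set exactly~$F_{j}$; since $\Delta|_{V_{j}}\subseteq\Delta$, regarding $\psi_{j}$ as a function on~$I$ yields a hierarchical supporting functional for~$\Pbf_{\Delta}$ whose zero set is precisely $\pi_{V_{j}}^{-1}(F_{j})$. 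Hence each $\pi_{V_{j}}^{-1}(F_{j})$ is facial for~$\Delta$, and so is their intersection (intersections of facial sets are facial).

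For the final identity, set $R:=\pi_{V_{1}}^{-1}(\face_{\Delta|_{V_{1}}}(T_{1}))\cap\pi_{V_{2}}^{-1}(\face_{\Delta|_{V_{2}}}(T_{2}))$. By Part~(2), $R$ is facial for~$\Delta$, and it contains~$T$ since $\pi_{V_{j}}(i)\in T_{j}\subseteq\face_{\Delta|_{V_{j}}}(T_{j})$ for every $i\in T$; thus $\face_{\Delta}(T)\subseteq R$. Conversely, applying the support decomposition to any $p\in\ol{\Ecal_{\Delta}}$ with $\supp(p)=\face_{\Delta}(T)$ gives
$\face_{\Delta}(T)=\pi_{V_{1}}^{-1}(\pi_{V_{1}}(\face_{\Delta}(T)))\cap\pi_{V_{2}}^{-1}(\pi_{V_{2}}(\face_{\Delta}(T)))$, and by Part~(1) each $\pi_{V_{j}}(\face_{\Delta}(T))$ is facial for~$\Delta|_{V_{j}}$ and contains~$T_{j}$, hence contains~$\face_{\Delta|_{V_{j}}}(T_{j})$; taking preimages and intersecting yields $\face_{\Delta}(T)\supseteq R$. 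The main obstacle is verifying that the factorization passes cleanly to the closure: this reduces to continuity of marginalization plus the observation that whenever $p_{S}(i_{S})=0$ one also has $p_{V_{j}}(i_{V_{j}})=0$ for every $i$ with $\pi_{S}(i)=i_{S}$, so the right-hand side is consistently zero under the $0/0=0$ convention.
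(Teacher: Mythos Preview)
The paper does not give its own proof of this lemma; it simply cites it as ``a simple reformulation of Lemma~8 in \citet{EFRS06:Polyhedral_conditions_MLE}.'' Your argument is correct and self-contained, so in that sense it goes beyond what the paper provides.

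Your route is genuinely different from the polytope-level arguments standard in that reference. Rather than working directly with faces and supporting hyperplanes of the marginal polytope, you characterize facial sets as supports of distributions in the closure via Theorem~\ref{thm:closure-of-expfam}, and then exploit the density factorization $p(i)=p_{V_{1}}(i_{V_{1}})\,p_{V_{2}}(i_{V_{2}})/p_{S}(i_{S})$ available whenever $S=V_{1}\cap V_{2}$ is a complete separator. The key observation---that this factorization persists in the closure (with $0/0=0$) and yields $\supp(p)=\pi_{V_{1}}^{-1}(\supp p_{V_{1}})\cap\pi_{V_{2}}^{-1}(\supp p_{V_{2}})$---is exactly what drives the reverse inclusion $\face_{\Delta}(T)\supseteq R$, which is the only non-automatic direction. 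This probabilistic approach buys you a conceptually clean argument that sidesteps any direct analysis of how the face lattice of $\Pbf_{\Delta}$ relates to those of $\Pbf_{\Delta|_{V_{1}}}$ and~$\Pbf_{\Delta|_{V_{2}}}$; the polytope approach, on the other hand, avoids invoking the closure theorem and the factorization, and would make Part~(2) immediate from Lemma~\ref{lem:sub-complex} (combined with Lemma~\ref{lem:facial-subcomplex} to pass between $V$ and~$V_{j}$). Indeed, your supporting-functional argument for Part~(2) essentially re-proves that lemma, so you could shorten the write-up by citing it directly.
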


Lemma~\ref{lem:reducible-facial} generalizes to more than one separator and thus to more than two components.  It
becomes particularly simple when these components are complete: in that case, $\face_{\Delta|_{V_{1}}}(T_{1})=T_1$.
Taking the preimage we obtain
\begin{equation*}
\pi_{V_1}^{-1}(\pi_{V_1}(T))=\big\{i\in I:\;\exists i'\in T\text{ such that }\pi_{V_1}(i)=\pi_{V_1}(i')\big\} \supseteq T.
\end{equation*}
Thus, for a decomposable complex $\Delta=\Delta_{1}\cup\Delta_{2}\cup\dots\cup\Delta_{r}$,
we have
\begin{equation}
  \label{eq:decomposable}
  \face_{\Delta}(T) = \pi_{1}^{-1}(\pi_{1}(T)) \cap \pi_{2}^{-1}(\pi_{2}(T)) \cap\dots\cap \pi_{r}^{-1}(\pi_{r}(T))
\end{equation}
for any $T\subseteq I$, where $\pi_{i}=\pi_{V(\Delta_{i})}$.

\section{Approximations of facial sets}
\label{sec:facial-sets}

We consider a hierarchical model with simplicial complex $\Delta$ and marginal polytope~$\Pbf_{\Delta}$.  In this
section, we develop the details of our methodology to obtain inner and outer approximations to the facial set $F_t$ of
the data vector~$t$.

\subsection{Inner approximations}
\label{sec:inner-approximations}

To obtain an inner approximation, our strategy is to find a separator~$S$ of $\Delta$ and to complete it.  To be
precise, we augment~$\Delta$ by adding all subsets of~$S$.  Thus, we obtain a simplicial complex
$\Delta_{S}=\Delta\cup\{M : M\subseteq S\}$ in which $S$ is a complete separator.  We can apply
Lemma~\ref{lem:reducible-facial} to find the facial set $\face_{\Delta_{S}}(I_{+})$, and this will be our inner
approximation of~$\face_{t}$, because $\face_{\Delta_{S}}(I_{+})\subseteq\face_{\Delta}(I_{+})  =\face_{t}$ according to
  Lemma~\ref{lem:sub-complex}.

An even simpler approximation is obtained by not only completing the separator itself, but also the two
parts~$V_{1},V_{2}$ separated by~$S$: the simplicial complex
$\Delta_{V_{1},V_{2}} := \{M : M\subseteq V_{1}\}\cup\{M:M\subseteq V_{2}\}$ is decomposable and contains~$\Delta$.  Its
facial sets can be computed from~\eqref{eq:decomposable}.

In general, the approximation obtained from a single separator (or, in general, a single super-complex) is not good;
that is, $F_{t}=\face_{\Delta}(I_{+})$ tends to be much larger than $\face_{\Delta_{S}}(I_{+})$ or
$\face_{\Delta_{V_{1},V_{2}}}(I_{+})$.  Thus we need to combine information from several separators.  For example, given
two separators~$S,S'\subseteq V$, we find a chain of approximations
\begin{align*}
  G_{0}' &:= I_{+}, \; \\
  G_{1} := \face_{\Delta_{S}}(G_{0}')&, \;
  G_{1}' := \face_{\Delta_{S'}}(G_{1}), \;\\
  G_{2} := \face_{\Delta_{S}}(G_{1}')&, \;
  G_{2}' := \face_{\Delta_{S'}}(G_{2}), \;\\
  &\;\vdots
\end{align*}
that satisfy
\begin{equation*}
  I_{+}\subseteq G_{1} \subseteq G_{1}' \subseteq G_{2} \subseteq \dots \subseteq F_t,
\end{equation*}
where all inclusions except the last one are due to the definition of $\face_{\Delta_{S}}(T)$ or $\face_{\Delta_{S'}}(T)$  as the smallest facial sets containing $T$ in $\Delta_S$ or $\Delta_{S'}$. The last inclusion is a consequence of Lemma~\ref{lem:sub-complex} since both $\Delta_S$ and $\Delta_{S'}$ contain~$\Delta$.
This chain of approximations has to stabilize; that is, after a certain number of iterations, the
approximations will not improve any more.  The limit $\face_{S,S'}(I^{+}) := \bigcup_{i}G_{i} = \bigcup_{i}G_{i}'$ can
be characterized as the smallest subset of~$I$ that contains~$I^{+}$ and is facial both with respect to~$\Delta_{S}$
and~$\Delta_{S'}$.  The same iteration can be done replacing $\Delta_{S}$ and $\Delta_{S'}$ by $\Delta_{V_{1},V_{2}}$
and $\Delta_{V'_{1},V'_{2}}$.  Applying in turn $\face_{\Delta_{V_{1},V_{2}}}$ and $\face_{\Delta_{V'_{1},V'_{2}}}$
gives another approximation~$\tilde\face_{S,S'}(I^{+})$, namely the smallest subset of~$I$ that contains~$I^{+}$ and is
facial both with respect to~$\Delta_{V_{1},V_{2}}$ and~$\Delta_{V_{1}',V_{2}'}$. This latter approximation will be used
in Section~\ref{sec:4x4}.  Clearly, $\tilde\face_{S,S'}(I^{+})$ is a worse approximation than~$\face_{S,S'}(I^{+})$,
since $\tilde\face_{S,S'}(I^{+})\subseteq\face_{S,S'}(I^{+})\subseteq F_{t}$, but it is easier to compute.

We use the following strategies:
\begin{enumerate}
\item if possible, use all separators of a graph.
\label{strat:all-separators}
\end{enumerate}
We illustrate this strategy in Section~\ref{sec:NLTCS-simulation}, using a graphical model associated with the
NLTCS data set.

There are two problems with this strategy: First, if $S$ is such that either $V_{1}$ or $V_{2}$ is large, then it
becomes difficult to compute $\face_{\Delta|_{V_{1}}}$ and $\face_{\Delta|_{V_{2}}}$.  Such
``bad'' separators always exist: namely, each node $i\in V$ is separated by its neighbours from all other nodes.  In
this case, $V_{1}$ consists of $i$ and its neighbours, and $V_{2}$ consists of $V\setminus\{i\}$.  For such a ``bad''
separator we can only compute $\face_{\Delta_{V_{1},V_{2}}}$, but not~$\face_{\Delta_{S}}$.  Second, the number of
separators may be large.
Thus, when computing the inner approximation, it may take a long time until the iteration over all separators converges.
A faster alternative strategy is the following:
\begin{enumerate}[resume]
\item use all separators such that both $V_{1}\setminus S$ and $V_{2}\setminus S$ are not too small (for example,
  $\min\{|V_{1}\setminus S|,|V_{2}\setminus S|\}\ge 3$).
  \label{strat:good-separators}
\end{enumerate}

In the case of the grids studied in Sections~\ref{sec:4x4} and~\ref{sec:5x10}, which have a lot of regularity, we use an
adapted strategy:
\begin{enumerate}[resume]
\item in a grid, use the horizontal, vertical and diagonal separators.
\label{strat:regular-separators}
\end{enumerate}
In the case of grids, the vertical separators form a family of pairwise disjoint separators.  In
Section~\ref{sec:large-graphs} we show how to make use of such a family to study faces of hierarchical models, even
if the facial sets are so large that they become computationally intractable.

\subsection{Outer approximations}
\label{sec:outer-approximations}

By Lemma~\ref{lem:sub-complex}, the facial set $\face_{\Delta'}(S)$ for a simplicial sub-complex
$\Delta'\subseteq\Delta$ provides an outer approximation of~$\face_\Delta(S)$.  Removing sets from~$\Delta$ decreases
the dimension of the marginal polytope, so it is often easier to compute $\face_{\Delta'}(S)$ than to
compute~$\face_\Delta(S)$.  Our main strategy is to look at induced sub-complexes.

When comparing $\Delta$ with an induced sub-complex~$\Delta|_{V'}$ for some $V'\subset V$, we have to be precise about whether we consider $\Delta|_{V'}$ as a complex
on $V$ or on $V'$.  When we consider it on $V$, then its design matrix ${A}$ has columns $f_{i}$ indexed by~$i\in I$.
When we consider it on $V'$, its design matrix $A'$ has columns $f'_{i}$ indexed by~$I_{V'}$.
Because we have the same set of interactions whether we are on $V$ or $V'$, we have for $i\in I$ and $i'\in
I_{V'}$,
\begin{equation}
\label{a=b}
f_i=f'_{i'} \Leftrightarrow  i\in \pi_{V'}^{-1}(i').
\end{equation}
Therefore the marginal polytopes of the two models are the same since they are the convex hull of the same set of vectors $\{f_i, i\in I\}=\{f'_{i'}, i'\in I_{V'}\}$. The relationship between the facial sets on $V$ and $V'$ is as follows:
\begin{lemma}
  \label{lem:facial-subcomplex}
  Let $V'\subseteq V$.  For $K\subset I$, we have 
  $$\face_{\Delta|_{V'}}(K) = \pi_{V'}^{-1}(\face'_{\Delta|_{V'}}(\pi_{V'}(K))).$$
  Here, $\face'_{\Delta|_{V'}}$ denotes the facial set when $\Delta|_{V'}$ is considered as a simplicial complex on~$V'$,
  and $\face_{\Delta|_{V'}}$ denotes the facial set when $\Delta|_{V'}$ is considered as a simplicial complex on~$V$.
\end{lemma}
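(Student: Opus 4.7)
The plan is to exploit the identification of vectors across the two indexings given by equation~\eqref{a=b}, which says that the column of the design matrix indexed by $i\in I$ equals the column indexed by $\pi_{V'}(i)\in I_{V'}$. Since the two models share the same marginal polytope $\Pbf$, every face $\Fbf$ of $\Pbf$ has two associated facial sets, one living in $I$ and one in $I_{V'}$, and these are tied by a simple projection relation which I will establish first.

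More precisely, first I would fix an arbitrary face $\Fbf$ of $\Pbf$ and show that its facial set on $V$ is the $\pi_{V'}$-preimage of its facial set on $V'$. By definition,
\begin{equation*}
\{\,i\in I : f_i\in \Fbf\,\}
\;=\;\{\,i\in I : f'_{\pi_{V'}(i)}\in \Fbf\,\}
\;=\;\pi_{V'}^{-1}\{\,i'\in I_{V'}: f'_{i'}\in\Fbf\,\},
\end{equation*}
where the first equality is precisely \eqref{a=b}. Consequently, the lattice of facial sets on $V$ (for $\Delta|_{V'}$ viewed on $V$) is exactly the image, under $\pi_{V'}^{-1}$, of the lattice of facial sets on $V'$; and $\pi_{V'}^{-1}$ is an inclusion-preserving bijection between the two lattices.

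Next I would use this bijection to identify the minimal facial set containing $K$. On the $V$-side, $\face_{\Delta|_{V'}}(K)$ corresponds to the smallest face of $\Pbf$ that contains $\{f_i:i\in K\}$. By \eqref{a=b}, this is the same as the smallest face of $\Pbf$ containing $\{f'_{i'}: i'\in \pi_{V'}(K)\}$, which by definition has facial set (on $V'$) equal to $\face'_{\Delta|_{V'}}(\pi_{V'}(K))$. Pulling this facial set back via $\pi_{V'}^{-1}$ using the bijection from the previous paragraph yields the claimed formula.

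There is no real obstacle here beyond keeping the two indexings straight; the only subtlety, which I would state explicitly, is that the minimum-containing property transfers correctly through $\pi_{V'}$ because $\pi_{V'}^{-1}$ is an order-isomorphism of facial-set lattices, and because $K\subseteq \pi_{V'}^{-1}(\pi_{V'}(K))$ so that ``contains $K$ on $V$'' and ``contains $\pi_{V'}(K)$ on $V'$'' are equivalent conditions on faces.
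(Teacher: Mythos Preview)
Your proposal is correct and follows essentially the same route as the paper's own proof: both arguments rest on \eqref{a=b} to identify $\{f_i:i\in K\}$ with $\{f'_{i'}:i'\in\pi_{V'}(K)\}$, conclude that the smallest face of the common polytope containing either set is the same, and then read off the facial set on the $V$-side as the $\pi_{V'}$-preimage of the facial set on the $V'$-side. Your version is slightly more explicit in first isolating the order-isomorphism between the two facial-set lattices before specializing to the minimal face, whereas the paper argues directly at the level of the smallest face; the content is the same.
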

\begin{proof}
For $K\subset I$, the two sets ${\cal A}=\{a_i, i\in K\}$ and ${\cal B}=\{b_{i'}, i'\in \pi_{V'}(K)\}$ are identical and therefore the smallest faces of the marginal polytopes for $\Delta_{V'}$  on $V$ or $V'$ containing ${\cal A}$ and ${\cal B}$ respectively are the same. 

By definition of $F'_{\Delta_{V'}}(\pi_{V'}(K))$, the smallest face containing $\cal{B}$ is defined by $\{b_{i'}, i'\in F'_{\Delta_{V'}}(\pi_{V'}(K))\}$. By definition of $F_{\Delta_{V'}}(K)$, the smallest face containing $\cal{A}$ is $\{a_i, i\in F_{\Delta_{V'}}(K)\}$. 
Also by \eqref{a=b}, we have that 
$\{a_i, \;i\in \pi_{V'}^{-1}(F'_{\Delta_{V'}}(\pi_{V'}(K)))\}=\{b_{i'},\;{i'}\in F'_{\Delta_{V'}}(\pi_{V'}(K))\}.$
Therefore
$F_{\Delta_{V'}}(K)=\pi_{V'}^{-1}(F'_{\Delta_{V'}}(\pi_{V'}(K))).$
\end{proof}


\medskip%
In general, $\face_{\Delta|_{V'}}(I_{+})$ is not a good approximation of~$\face_{\Delta}(I_{+})$.  This
approximation can be improved by considering several subsets of~$V$.  To be precise, if $V_{1},\dots,V_{r}\subseteq V$, then
$\face_{\Delta}(I_{+})\subseteq\face_{\Delta|_{V_{i}}}(I_{+})$ for $i=1,\dots,r$, and thus
$\face_{\Delta}(I_{+})\subseteq\bigcap_{i=1}^{r}\face_{\Delta|_{V_{i}}}(I_{+})
=:\face_{V_{1},\dots,V_{r};\Delta}(I_{+})$.
In contrast to the case of the inner approximation, no repeated iteration is needed.  Thus, the outer
approximation is faster to compute.

The question is now how to choose the subsets~$V_{i}$.  Clearly, the subsets $V_{i}$ should cover~$V$, and, more
precisely, they should cover~$\Delta$, in the sense that for any~$D\in\Delta$ there should be one $V_{i}$
with~$D\in \Delta|_{V_{i}}$.  The larger the sets~$V_{i}$, the better the approximation becomes, but the more difficult it
is to compute $\face_{V_{1},\dots,V_{r};\Delta}(I_{+})$.  One generic strategy is the following:
\begin{enumerate}
\item use all subsets of $V$ of fixed cardinality~$k$ plus all facets $D\in\Delta$ with~$|D|\ge k$.
\label{strat:fixed-cardinality}
\end{enumerate}
This choice of subsets indeed covers~$\Delta$.  The parameter $k$ should be chosen as large as possible such that
computing $\face_{V_{1},\dots,V_{r};\Delta}(I_{+})$ is still feasible.  Note that computing $\face_{\Delta|_{D}}(I_{+})$
for $D\in\Delta$ is trivial, since $\Pbf_{\Delta|_{D}}$ is a simplex.  Another natural strategy, due
to~\cite{MassamWang15:local_approach}, is the following:
\begin{enumerate}[resume]
\item for fixed $k$, use balls $B_{k}(v)=\{w: d(v,w)\le k\}$ around the nodes $v\in V$, where $d(\cdot,\cdot)$ denotes
  the edge distance in the graph.
\label{strat:neighbourhoods}
\end{enumerate}

Our general philosophy is that the subsets $V_{i}$ should be large enough to preserve some of the structure
of~$\Delta$.  For example, for the grid graphs, we suggest to use $3\times3$ sub-grids.  These graphs have two nice
properties: First, they already have the appearance of a small grid.  Second, for any vertex $v\in V$, there is a
$3\times3$ sub-grid that contains $v$ and all neighbours of~$v$.  We will compare two different strategies:
\begin{enumerate}[resume]
\item for a grid, use all $3\times3$ sub-grids;
\label{strat:all-subgrids}
\item cover a grid by $3\times3$ sub-grids.
\label{strat:subgrid-cover}
\end{enumerate}
In Section~\ref{sec:5x10} we compare these two methods, and we observe that, in the example of the $5\times10$ grid, it
suffices to only look at a covering.

In general, it is not enough to look at induced sub-complexes, unless $\Delta$ has a complete separator (see
Section~\ref{sec:reducible-models}).  However, the approximation tends to be good and gives the correct facial set in
many cases.

\subsection{Comparing the two approximations}
\label{sec:comparing-approximations}

Suppose that we have computed two approximations $F_{1},F_{2}$ of $F_{t}$ such that $F_{1}\subseteq F_{t}\subseteq
F_{2}$.  If we are in the lucky case that $F_{1}=F_{2}$, then we know that $F_{t}=F_{1}=F_{2}$.  In general, the
cardinality of $F_{2}\setminus F_{1}$ indicates the quality of our approximations.

The sets $F_{1}$, $F_{2}$ and $F_{t}$ can also be compared by the ranks of the matrices $\tilde A_{F_1}$, $\tilde A_{F_2}$ and
$\tilde A_{F_{t}}$ obtained from $ \tilde{A}$ by keeping only the columns indexed by $F_1$, $F_2$ and $F_{t}$,
respectively.  Clearly, $\rank\tilde A_{F_{1}}\le\rank\tilde A_{F_{t}}\le\rank\tilde A_{F_{2}}$.  Note that
$\rank\tilde A_{F_{2}}-1$ equals the dimension of the corresponding face~$\Fbf_{2}$ of~$\Pbf$, and $\rank \tilde{A}_{F_{t}}-1$
equals the dimension of~$\Fbf_{t}$.
Although $F_{1}$ does not necessarily correspond to a face of~$\Pbf$, we
can bound the codimension of~$\Fbf_{t}$ in~$\Fbf_{2}$ by
\begin{equation*}
  \dim\Fbf_{2}-\dim\Fbf_{t} \le \rank\tilde A_{F_{2}}-\rank\tilde A_{F_{1}}.
\end{equation*}
In particular, if $\rank\tilde A_{F_{2}}=\rank\tilde A_{F_{1}}$, then we know that $F_{t}=F_{2}$.  In this case, our approximations
give us a precise answer, even if $F_{1}\neq F_{2}$ and the lower approximation $F_{1}$ is not tight.

\section{Parameter Estimation when the MLE does not exist}
\label{sec:parameter-estimation}

\subsection{Computing the extended MLE}
\label{sec:computingemle}
If the MLE $\theta^{*}$ exists, then it can be computed by finding the unique maximum of the log-likelihood
function~$l(\theta)$ given in~\eqref{lik-theta}.  As mentioned before, $l(\theta)$ is concave (or even strictly concave,
if the parameters~$\theta$ are identifiable), and thus the maximum is, at least in principle, easy to find (in practice,
for larger models, it may be difficult to evaluate the function~$k(\theta)$, which involves a sum over~$I$; but we will
not discuss this problem here).  In general, the maximum cannot be found symbolically, but there are efficient numerical
algorithms to maximize concave functions.  Any reasonable hill-climbing algorithm should be capable of finding the MLE.
An example of an algorithm commonly used is \emph{iterative proportional fitting} (IPF), which can be thought of as an
algorithm of Gauss-Seidel type~\citep{CsiszarShields04:Information_Theory_and_Statistics}.

When the MLE does not exist but the facial set $F=F_{t}$ of the data is known, then it is straight forward to compute
the extended MLE~$p^{*}$.  In this case, we know that $p^{*}$ lies in~$\Ecal_{F,A}$.  To find $p^{*}$, we need to
optimize the log-likelihood $\tilde l$ over~$\Ecal_{F,A}=\{p_{F,\theta}:\theta\in\R^{h}\}$.  Plugging the
parametrization $p_{F,\theta}$ (see Theorem~\ref{thm:closure-of-expfam}) into $\tilde l$ tells us that we need to
optimize the restricted log-likelihood function
\begin{equation}
  \label{eq:likelihood-on-face}
  l_{F}(\theta) = \log(\prod_{i\in I_{+}}p_{F,\theta}(i)^{n(i)}) = \sum_{j\in J}\theta_{j}t_{j} - N k_{F}(\theta).
\end{equation}
This problem is of a similar type as the problem to maximize $l$ in the case that the MLE exists, and the same
algorithms as discussed above can be used.  The problem here is slightly easier, since $F$ is smaller than~$I$.
However, 
as stated above, the parametrization $\theta\mapsto p_{F,\theta}$ is never identifiable.  Of course, this problem is
easy to solve by selecting a set of independent parameters among the~$\theta_{j}$.
However, depending on the choice of the independent subset, the values of the
parameters change, and in particular, it is meaningless to compare the values of the parameters $\theta_{j}$ with
parameter values of any other distribution in $\Ecal_{A}$ or in the closure~$\ol{\Ecal_{A}}$.

Before explaining how to find better parameters on~$\Ecal_{F,A}$, let us discuss what happens if the facial set $F_{t}$
of the data is not known.  As mentioned before, whether or not the MLE exists, the log-likelihood function
$l(\theta)$ 
is always strictly concave (assuming that the parametrization is identifiable).  When the MLE does not exist, then the
maximum is not at a finite value~$\theta^{*}$, but lies ``at infinity.''  Still, as observed
by~\citet[Section~3.15]{Geyer09:Likelihood_inference_in_exponential_families}, any reasonable numerical
``hill-climbing'' algorithm that tries to maximize the likelihood will tend towards the right direction.  Such a numeric
algorithms generates a sequence of parameter values $\theta^{(1)},\theta^{(2)},\theta^{(3)},\dots$ with increasing
log-likelihood values~$l(\theta^{(1)})\le l(\theta^{(2)})\le\dots$.  Since $l(\theta)$ is concave, our optimization
problem is numerically easy (at least in theory), and for any reasonable such algorithm, the limit
$\lim_{s\to\infty}l(\theta^{(s)})$ will equal $\sup_{\theta}l(\theta) = \max_{p\in\ol{\Ecal_{A}}}\tilde l(p)$.
The algorithm will stop when the difference $l(\theta^{(s+1)})-l(\theta^{(s)})$ becomes negligeably small.  The output,
$\theta^{(s)}$, then gives a good approximation of the EMLE, in the sense that $p^{*}$ and $p_{\theta^{(s)}}$ are close
to each other.
For many applications, such as in machine learning, where it is more important to have good values of the parameters
instead of trying to model the ``true underlying distribution,'' or when doing a likelihood test, where the value of the
likelihood is more important than the parameter values, this may be good enough.

However, in this numerical optimization, some of the parameters $\theta_{j}$ will tend to $\pm\infty$, which may lead to
numerical problems.  For example, it may happen that one parameter goes to~$+\infty$ and a second parameter to~$-\infty$
in such a way that their sum remains finite (see Appendix~\ref{S-sec:two-binaries} for a simple such example with two variables).  This implies that a difference between two large numbers has to computed,
which is numerically unstable.  Also, it is not clear, which parameters tend to infinity numerically.  In fact, this may
depend on the chosen algorithm; i.e. different algorithms may yield approximations of the EMLE that are qualitatively
different in the sense that different parameters diverge. 

To avoid such problems, we propose a change of coordinates that allows us to control which parameters diverge, at least
in the case where we know the facial set~$F_{t}$.  If $F_{t}$ is unknown, but if we know approximations
$F_{1}\subseteq F_{t}\subseteq F_{2}$, we can use this knowledge to identify some parameters that definitely remain
finite, while some parameters definitely diverge.  We cannot control the behaviour of the remaining parameters, but, as will be illustrated in Section~\ref{sec:NLTCS-simulation}, the MLE obtained with the model on $\Fbf_2$ lies closer to the EMLE than the MLE on the original model.
The more information we have about the facial set~$F_{t}$, the better we can control the above
mentioned pathologies.

\subsection{An identifiable parametrization}
\label{sec:param-trafos}

We have seen that when we use the parametrization $\theta\mapsto p_{F_{t},\theta}$ of $\Ecal_{A,F_{t}}$ in the case
where $F_{t}\neq I$, we have to expect the following (interrelated) issues:
\begin{enumerate}
\item The parametrization is not identifiable, i.e.~there are parameters~$\theta,\theta'$ with $p_{F_{t},\theta}=p_{F_{t},\theta'}$.
\item While the parametrization $\theta\mapsto p_{F_{t},\theta}$ of $\Ecal_{F_{t},A}$ looks similar to the parametrization $\theta\mapsto
  p_{\theta}$ of~$\Ecal_{A}$, the values of the parameters in both parametrizations are not related to each other.
\item When $p_{\theta^{(s)}}\to p_{F_{t},\theta}$ as $s\to\infty$ for some parameter values $\theta^{(s)},\theta$, then
  some of the parameter values $\theta^{(s)}$ diverge to~$\pm\infty$.  When computing probabilities, there may be
  linear combinations of these diverging parameters that remain finite.
\end{enumerate}
Next we show that if $F_{t}$ is known, then, with a convenient choice of~$L$, the parameters $\mu_{L}$ (introduced in
Section~\ref{sec:discr-expon-famil}) solve 1 and~2 and improve~3.  Afterwards, we discuss what can be done if $F_{t}$
is not known.  We briefly discuss the general solution towards~3 in Appendix \ref{S-sec:best-parameters}.
In any case, the choice of the parameters will depend on the facial set~$F_{t}$:  it is not possible to define a
single parametrization that works for all facial sets simultaneously.

Suppose that $F_{t}$ is known.  We choose a zero element in~$I_{+}$ and consider the parameters $\mu_{i}$ as in
Section~\ref{sec:mus-0}.  Recall that
\begin{equation*}
  \mu_{i}(\theta) = \<\t, f_{i}-f_{0}\> = \log p(i)/p(0), i\in I.
\end{equation*}
As mentioned in Section~\ref{sec:mus-0}, the parameters $\mu_{i}$ are not independent, and we need to choose an independent subset~$L$.  We will do this in two steps.
\begin{enumerate}
\item Choose a maximal subset~$L_{t}$ of $F_{t}$ such that the parameters~$\mu_{i}$, $i\in L_{t}$ are independent.
\item Then extend $L_{t}$ to a maximal subset $L\subseteq I$ such that the parameters~$\mu_{i}$, $i\in L$ are
  independent by adding elements $i\in I\setminus F_{t}$.
\end{enumerate}
It follows from Theorem~\ref{thm:gmle} that the following holds.
\begin{enumerate}
\item The subset $\mu_{i}$, $i\in L_{t}$, of the parameters $\mu_{L}$ gives an identifiable parametrization
  of~$\Ecal_{F_{t},A}$.
\item Let $\mu^{*}_{i}$, $i\in L_{t}$, be the parameter values that maximize~$l_{F_{t}}$ (and thus give the EMLE).  When
  the likelihood $l(\mu)$ in~\eqref{lik-mu} is maximized numerically on~$I$, then in successive iterations of the
  maximization, the estimates $\mu^{(s)}_i$ are such that
  \begin{equation*}
    \mu^{(s)}_{i} \to
    \begin{cases}
      \mu^{*}_{i}, & i\in L_{t}, \\
      -\infty, & \text{ otherwise.}
    \end{cases}
  \end{equation*}
  In particular, no parameter tends to~$+\infty$.
\end{enumerate}
The last property ensures a consistency of the parameters $\mu_{i}$ on $\Ecal_{A}$ and on~$\Ecal_{F_{t},A}$.  This is
important in those cases where the parameters have an interpretation and where it is of interest to know the value of
some parameters, if it is well-defined.  For example, in hierarchical models, the parameters correspond to
``interactions'' of the random variables, and it may be of interest to know, which of these interactions are important.
Thus, it is of interest to know the size of the corresponding parameter.  Usually, it is not the parameter~$\mu_{i}$,
but the original parameters~$\theta_{i}$ that have an interpretation.  But when we understand the parameters~$\mu_{i}$,
we can also tell which of the paramters~$\theta_{i}$ or which combinations of the parameters $\theta_{i}$ have finite
well-defined values and can be computed, and which parameters diverge:

\begin{lemma}
  \label{lem:mu-properties}
  Suppose that $\theta^{(s)}$, $s\in\N$, are parameter values such that $p_{\theta^{(s)}}\to p^{*}$ as $s\to\infty$.
  For any $i\in L_{t}$, the linear combination
  \begin{equation*}
    \mu_{i}^{(s)} = \<\theta^{(s)},f_{i}\> 
  \end{equation*}
  has a well-defined finite limit as $s\to\infty$.  Any linear combination of the $\theta_{i}^{(s)}$ that has a
  well-defined finite limit (that is, a limit that is independent of the choice of the sequence~$\theta^{(s)}$) is
  a linear-combination of the~$\mu_{i}^{(s)}$ with $i\in L_{t}$.
\end{lemma}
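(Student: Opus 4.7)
The lemma has two claims: (1) $\mu_i^{(s)}$ converges to a finite, sequence-independent limit for every $i \in L_t$; and (2) any linear combination of the $\theta^{(s)}_j$ with a well-defined (sequence-independent) limit is itself a linear combination of the $\mu_i^{(s)}$, $i \in L_t$. My plan is to prove (1) directly from the log-ratio interpretation $\mu_i(\theta) = \langle \theta, f_i - f_0\rangle = \log p_\theta(i)/p_\theta(0)$ recorded in Section~\ref{sec:discr-expon-famil}, using $\supp(p^*) = F_t$ from Theorem~\ref{thm:gmle}. For (2), I will identify the space of linear combinations $\langle c, \theta^{(s)}\rangle$ having a well-defined limit with $W := \mathrm{span}\{f_i - f_0 : i \in F_t\}$, which equals $\mathrm{span}\{f_i - f_0 : i \in L_t\}$ by maximality of~$L_t$. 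The inclusion ``linear combination of $\mu_i$'s $\Rightarrow$ well-defined limit'' is immediate from (1); the reverse inclusion, which I expect to be the main obstacle, requires producing, for every $c \notin W$, a perturbed sequence $\tilde\theta^{(s)}$ converging in distribution to $p^*$ along which $\langle c, \cdot \rangle$ behaves differently.

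\textbf{First part.}
Since $\supp(p^*) = F_t$ by Theorem~\ref{thm:gmle}, and since $0 \in I_+ \subseteq F_t$ and $L_t \subseteq F_t$, we have $p^*(i) > 0$ for every $i \in L_t$ and $p^*(0) > 0$. Componentwise convergence $p_{\theta^{(s)}} \to p^*$ then yields $\mu_i^{(s)} = \log p_{\theta^{(s)}}(i) - \log p_{\theta^{(s)}}(0) \to \log p^*(i)/p^*(0) \in \R$, which is finite and independent of the choice of sequence.

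\textbf{Second part.}
For $c \in W$, write $c = \sum_{i \in L_t} \alpha_i (f_i - f_0)$; then $\langle c, \theta^{(s)}\rangle = \sum_{i \in L_t} \alpha_i \mu_i^{(s)}$ converges by the first part. For the converse, assume $c \notin W$. Since $\Fbf_t$ is a face of $\Pbf_A$ and $0 \in F_t$, its defining supporting hyperplane furnishes $g \in \R^J$ with $\langle g, f_i - f_0\rangle = 0$ for $i \in F_t$ and $\langle g, f_i - f_0\rangle < 0$ for $i \notin F_t$; in particular $g \in W^\perp$. The set $C = \{v \in W^\perp : \langle v, f_i - f_0\rangle < 0 \text{ for all } i \notin F_t\}$ is a nonempty, relatively open convex cone in $W^\perp$ containing $g$. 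Because $c \notin W = (W^\perp)^\perp$, the functional $\langle c, \cdot \rangle|_{W^\perp}$ is nontrivial, and so cannot vanish identically on the nonempty open set $C$; pick $v \in C$ with $\langle c, v\rangle \neq 0$. Define $\tilde\theta^{(s)} = \theta^{(s)} + sv$. For $i \in F_t$, $\mu_i(\tilde\theta^{(s)}) = \mu_i(\theta^{(s)})$, while for $i \notin F_t$, $\mu_i(\tilde\theta^{(s)}) \to -\infty$; a normalization argument using $\sum_i p_{\tilde\theta^{(s)}}(i) = 1$ then forces $p_{\tilde\theta^{(s)}} \to p^*$. But $\langle c, \tilde\theta^{(s)}\rangle = \langle c, \theta^{(s)}\rangle + s\langle c, v\rangle$ diverges as $s \to \infty$, contradicting sequence-independence of the limit. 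Hence $c \in W$, completing the proof.
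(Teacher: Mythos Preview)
Your proof is correct; both parts go through. The first part matches the paper's one-line argument exactly. The second part reaches the same conclusion by a genuinely different route.

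The paper's argument for the second part is more algebraic and arguably shorter. It first passes to the coordinates $\mu_{L}$ via the invertible linear map $\theta\mapsto\mu_{L}(\theta)$, so that every linear combination of the $\theta_{j}$'s is a linear combination of the $\mu_{i}$'s with $i\in L$. If such a combination $\sum_{i}a_{i}\mu_{i}$ involves some $\mu_{j}$ with $j\in L\setminus L_{t}$, the paper perturbs by a \emph{constant}: set $\mu_{j}^{\prime(s)}=\mu_{j}^{(s)}+1$ and leave the other free parameters unchanged. Because $L_{t}$ is maximal in $F_{t}$, every $\mu_{i}$ with $i\in F_{t}$ is a linear function of $\mu_{L_{t}}$ alone and is therefore untouched, while every $\mu_{i}$ with $i\notin F_{t}$ is shifted by at most a constant and still tends to $-\infty$; hence $p_{\mu^{\prime(s)}}\to p^{*}$, but the linear combination differs by $a_{j}\neq 0$.

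Your argument is geometric: you stay in $\theta$-coordinates, identify the relevant subspace $W=\mathrm{span}\{f_{i}-f_{0}:i\in F_{t}\}$, and use a supporting hyperplane of $\Fbf_{t}$ to build a relatively open cone $C\subset W^{\perp}$ of directions that push all $\mu_{i}$ with $i\notin F_{t}$ toward $-\infty$ while fixing those with $i\in F_{t}$. You then perturb by a \emph{divergent} amount $sv$ to force the combination to blow up. What the paper buys with its approach is brevity and no appeal to face geometry beyond what is already encoded in $L_{t}\subset L$; what your approach buys is a cleaner, coordinate-free picture of why the well-defined combinations are exactly those coming from the affine span of~$\Fbf_{t}$, and it does not rely on the invertibility of $\theta\mapsto\mu_{L}$ (so it is slightly more robust to non-identifiable~$\theta$). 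Either argument is fine here.
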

\begin{proof}
  The first statement follows from
  \begin{equation*}
    \mu_{i}^{(s)}=\log p_{\theta^{(s)}}(i)/p_{\theta^{(s)}}(0)\to\log
    p^{*}(i)/p^{*}(0).
\end{equation*}
  For the second statement, note that any linear combination of the $\theta$ is also a linear
  combination of the~$\mu$, since the linear map $\theta\mapsto\mu(\theta)$ is invertible.  We now show that if a linear
  combination $\sum_{i}a_{i}\mu_{i}$ involves some $\mu_{j}$ with $j\notin L_{t}$, then there exist sequences
  $\mu^{(s)}$, $\mu^{\prime(s)}$ of parameters with
  \begin{equation*}
    \lim_{s\to\infty}p_{\mu^{(s)}}=\lim_{s\to\infty}p_{\mu^{\prime(s)}}
    \quad\text{and}\quad
    \lim_{s\to\infty}\sum_{i}a_{i}\mu_{i}^{(s)}\neq\lim_{s\to\infty}\sum_{i}a_{i}\mu_{i}^{\prime(s)}.
  \end{equation*}
  So suppose that $\mu^{(s)}$ is a sequence of parameters such that $\lim_{s\to\infty}p_{\mu^{(s)}}$ exists and such
  that $\lim_{s\to\infty}\sum_{i}a_{i}\mu_{i}^{(s)}$ is finite.  Define
  \begin{equation*}
    \mu_{i}^{\prime(s)} =
    \begin{cases}
      \mu_{j}^{(s)} + 1, & \text{ if i=j}, \\
      \mu_{i}^{(s)}, & \text{ otherwise.}
    \end{cases}
  \end{equation*}
  An easy computation shows that
  \begin{equation*}
    \lim_{s\to\infty}p_{\mu^{\prime(s)}}=\lim_{s\to\infty}p_{\mu^{(s)}}
    \quad\text{and}\quad
    \lim_{s\to\infty}\sum_{i}a_{i}\mu_{i}^{\prime(s)}=\lim_{s\to\infty}\sum_{i}a_{i}\mu_{i}^{(s)} + a_{j}.
    \qedhere
  \end{equation*}
\end{proof}

Suppose now that we do not know~$F_{t}$, but that instead we have  approximations $F_{1}$, $F_{2}$ that satisfy
\begin{equation*}
  I_{+}\subseteq F_{1} \subseteq F_{t} \subseteq F_{2} \subseteq I.
\end{equation*}
In this case, we proceed as follows to obtain an independent subset~$L$ among the parameters~$\mu_{i}$:
\begin{enumerate}
\item Choose a maximal subset~$L_{1}$ of $F_{1}$ such that the parameters~$\mu_{i}$, $i\in L_{1}$ are independent.
\item Then extend $L_{1}$ to a maximal subset $L_{2}\subseteq F_{2}$ such that the parameters~$\mu_{i}$, $i\in L_{2}$
  are independent by adding elements $i\in F_{2}\setminus F_{1}$.
\item Finally, extend $L_{2}$ to a maximal subset $L\subseteq I$ such that the parameters~$\mu_{i}$, $i\in L$ are
  independent by adding elements $i\in I\setminus F_{2}$.
\end{enumerate}
These parameters have the following properties that follow directly from Lemma~\ref{lem:mu-properties}:
\begin{lemma}
  Suppose that $\theta^{(s)}$, $s\in\N$, are parameter values such that $p_{\theta^{(s)}}\to p^{*}$ as $s\to\infty$, and
  let $\mu_{i}^{(s)} = \<\theta^{(s)},f_{i}\>$.
  \begin{enumerate}
  \item For any $i\in L_{1}$, the linear combination
    \begin{equation*}
      \mu_{i}^{(s)} = \<\theta,f_{i}\>
    \end{equation*}
    has a well-defined finite limit as $s\to\infty$.  Thus, any linear combination of the $\mu_{i}^{(s)}$ with $i\in
    L_{1}$ has a well-defined limit as $s\to\infty$.
  \item Any linear combination $\sum_{i}a_{i}\mu_{i}^{(s)}$ that has a well-defined limit as $s\to\infty$ is in fact a
    linear combination of the $\mu_{i}^{(s)}$ with~$i\in L_{2}$.  Thus, a linear combination that involves at
    least one $\mu_{j}^{(s)}$ with $j\in L\setminus L_{2}$ does not have a well-defined limit.
  \end{enumerate}
\end{lemma}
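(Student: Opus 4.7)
The plan is to reduce both claims to Lemma~\ref{lem:mu-properties} applied to the (unknown but well-defined) facial set $F_{t}$, and then to translate the conclusions phrased in terms of some $L_{t}$ into statements about the given sets $L_{1}$ and $L_{2}$, using only the inclusions $L_{1}\subseteq F_{1}\subseteq F_{t}\subseteq F_{2}$ and elementary linear algebra of maximal independent subsets.

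First, I will fix a maximal independent subset $L_{t}\subseteq F_{t}$ among the $\mu_{i}$, chosen so that $L_{1}\subseteq L_{t}$. Such a choice is available: by construction the parameters $\mu_{i}$, $i\in L_{1}$, are independent, and $L_{1}\subseteq F_{1}\subseteq F_{t}$, so any maximal independent extension inside~$F_{t}$ may be taken to contain~$L_{1}$. Part~(1) of Lemma~\ref{lem:mu-properties} then immediately gives that $\mu_{i}^{(s)}$ has a finite limit as $s\to\infty$ for every $i\in L_{t}$, and in particular for every $i\in L_{1}$. The ``thus'' statement, namely that any linear combination indexed by $L_{1}$ also admits a finite limit, follows at once from linearity of the limit. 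This settles the first assertion.

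For the second assertion, suppose the combination $\sum_{i\in L}a_{i}\mu_{i}^{(s)}$ has a well-defined finite limit. Part~(2) of Lemma~\ref{lem:mu-properties}, applied to the chosen $L_{t}$, allows me to rewrite it as $\sum_{i\in L_{t}}c_{i}\mu_{i}^{(s)}$ for suitable constants~$c_{i}$. Now I use the inclusion $L_{t}\subseteq F_{t}\subseteq F_{2}$ together with the fact that $L_{2}$ is a maximal independent subset of $\{\mu_{i}:i\in F_{2}\}$: every $\mu_{i}$ with $i\in F_{2}$, and in particular with $i\in L_{t}$, is a linear combination of the $\mu_{j}$, $j\in L_{2}$. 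Substituting yields a representation of the original expression purely in terms of $\{\mu_{j}^{(s)}:j\in L_{2}\}$, which is the first assertion of part~(2). The last sentence of the lemma then follows from uniqueness of representation in the independent family $\{\mu_{i}:i\in L\}$: if the combination lies in the span of $\{\mu_{i}:i\in L_{2}\}$, then the coefficients $a_{j}$ for $j\in L\setminus L_{2}$ must vanish.

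The main obstacle is not substantive but organizational: one must simultaneously juggle three maximal independent sets $L_{1}$, $L_{t}$, $L_{2}$ defined with respect to different subsets of $I$ (namely $F_{1}$, $F_{t}$, $F_{2}$). The two key compatibility facts, that $L_{1}$ can be extended to a maximal independent set within $F_{t}$ and that every $\mu_{i}$ with $i\in L_{t}$ lies in the span of $\{\mu_{j}:j\in L_{2}\}$, are instances of the standard matroid-exchange property applied to linear independence of the $\mu_{i}$. Once these compatibilities are in place, both assertions reduce directly to Lemma~\ref{lem:mu-properties}.
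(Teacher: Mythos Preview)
Your proposal is correct and follows exactly the route the paper intends: the paper states only that the lemma ``follows directly from Lemma~\ref{lem:mu-properties},'' and you have supplied precisely the missing details, namely extending $L_{1}$ to a maximal independent $L_{t}\subseteq F_{t}$ and then using that every $\mu_{i}$ with $i\in L_{t}\subseteq F_{2}$ lies in the span of $\{\mu_{j}:j\in L_{2}\}$.
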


\section{Simulation study and applications to real data}
\label{sec:experiments}

In this section, we illustrate our methodology.  In~\ref{sec:4x4}, we simulate data for the graphical model of the
$4\times4$ grid and show how to exploit the various types of separators in order to obtain good inner and outer
approximations.  We find that our method gives very accurate result in this model of modest size.
In~\ref{sec:NLTCS-simulation}, we work with the NLTCS data set, a real-world data set.
We compare different inner approximations~$F_{1}$ and find that most of the time, $F_{1}$ and $F_{2}$ are equal, and
thus they are both equal to~$F_{t}$.  We also compute the EMLE and compare these exact estimates to those obtained  when maximizing
the likelihood functions~$l$ and~$l_{F_{2}}$. We find the results given by $l_{F_{2}}$ better than those given by $l$, and extremely close to the finite components of the EMLE.

\subsection{\texorpdfstring{$4 \times 4$}{4x4} grid graph}
\label{sec:4x4}

We generated random samples of varying sizes for the graphical model of the $4\times4$ grid graph with binary variables (Fig.~\ref{4by4}).
For each sample, we compute inner and outer approximations $F_{1}$ and~$F_{2}$, and we compare them to the true facial
set~$F_{t}$, which we can obtain using linear programming.
To obtain an inner approximation, we use two strategies. Either, we iterate over all possible separators, of which there
are 106 (Strategy~\eqref{strat:all-separators} in Section~\ref{sec:inner-approximations}), or we iterate over the 3
horizontal, 3 vertical and 8 diagonal separators only (Strategy~\eqref{strat:regular-separators} in
Section~\ref{sec:inner-approximations}).  We obtain the same result with either strategy.  Clearly,
Strategy~\eqref{strat:regular-separators} is much faster.  To compute the outer approximation, we cover the $4\times4$
grid by four $3\times3$ grids (Strategy~\eqref{strat:all-subgrids} in Section~\ref{sec:outer-approximations}).

We generate samples from the hierarchical model $P_{\theta}$, where the vector of parameters
$\theta$ is drawn from a multivariate standard normal distribution (for each sample, new parameters were drawn).  The
results are given in Table~\ref{tab:4by4_std}.  For each sample size, \num{1000} samples were obtained.   Observe that the squared length
of the parameter vector $\theta$ is $\chi^2$-distributed with 40 degrees of freedom (since the number of parameters
is~40).  Thus, the expected length of $\theta$ is~40, which is large enough to move the distribution $p_{\theta}$ close
to the boundary of the model.  Indeed, we observed that when the MLE does not exist, the length of the numerical
estimate of the MLE vector is of the order of magnitude of~40 (see also the next example in
Section~\ref{sec:NLTCS-simulation}).
In all samples that we generated, $F_{t}=F_{2}$, and $F_{1}=F_{2}$ in the vast majority of cases.  Thus, for
this graph of relatively modest size, our approximations are very good.
We present additional simulation results in Appendix~\ref{S-sec:uniform-4x4}.

\begin{table} 
  \centering
  \caption{Facial set approximation of $4 \times 4$ grid graph(hierarchical log-linear model with parameters from standard normal distribution)}
  \begin{tabular}{dddd}
    \toprule
    \multicolumn{1}{c}{sample size} & \multicolumn{1}{c}{MLE does not exist} & \multicolumn{1}{c}{$F_1=F_t$} & \multicolumn{1}{c}{$F_2=F_t$} \\
    \midrule
    10 & 100.0\% & 97.7\% & 100.0\%\\
     50 & 89.5\% & 100.0\% & 100.0\% \\
    100 & 71.0\% & 100.0\% & 100.0\% \\
    150 & 52.0\% & 100.0\% & 100.0\% \\
    \bottomrule
  \end{tabular}
  \label{tab:4by4_std}
\end{table}

\subsection{NLTCS data set}
\label{sec:NLTCS-simulation}

To illustrate how approximate knowledge of the facial set allows us to say which parameters can be estimated (as
explained in Section~\ref{sec:parameter-estimation}), we study the NLTCS data set, which consists of \num{21574}
observations on 16 binary variables, called ADL1, \dots, ADL6, IADL1, \dots, IADL10.  The reader is referred to
\citet{DobraLenkoski11:Copula_Gaussian_graphical_models} for a detailed description of the data set.
To associate a hierarchical model to this data, we rely on the results of
\citet{DobraLenkoski11:Copula_Gaussian_graphical_models} who use a Bayesian approach to estimate the posterior inclusion
probabilities of edges.  We construct a graph by saying that $(x,y)$ is an edge if and only if the posterior inclusion
probability of $(x,y)$ is at least~\num{0.40}: we obtain Figure~\ref{NLTCS}.  Then we take the corresponding clique complex of this graph so that
our hierarchical model is a graphical model.
There are 314 parameters in this model, including up to 6-way interactions.
In total, the graph has 40 separators.
\begin{figure}[t]
  \centering
  \subfloat[\label{4by4}]{
    \begin{minipage}[c]{0.35\linewidth}
    \begin{tikzpicture}[scale=1.3] \grid{4}{4}  \end{tikzpicture}      
    \end{minipage}
  }
  \hfill
  \subfloat[\label{NLTCS}]{
    \begin{minipage}[c]{0.56\linewidth}
    \includegraphics[width=\textwidth]{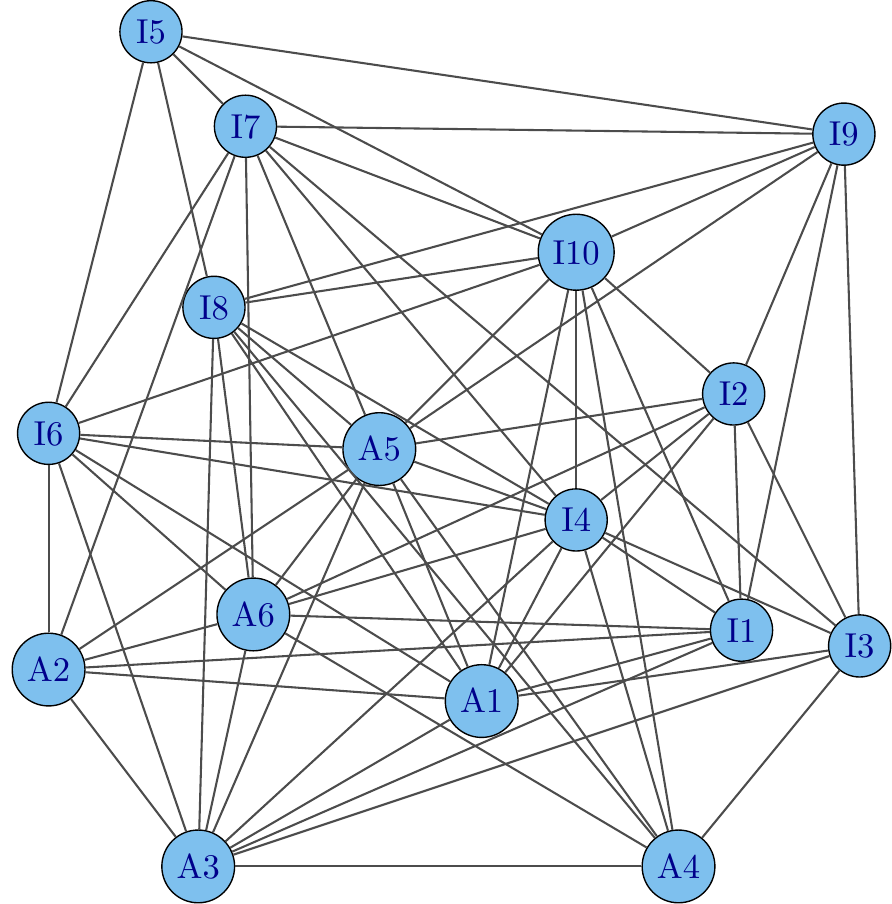}
  \end{minipage}
}
  \caption{\textup{(\protect\subref*{4by4})} $4 \times 4$ grid graph. \textup{(\protect\subref*{NLTCS})} Graphical model for NLTCS data set.  The label ``\textup{A}$n$'' abbreviates ADL$n$, ``\textup{I}$n$''
    abbreviates IADL$n$.}
\end{figure}

In order to compare the estimates obtained with or without worrying about the existence of the MLE and with or
without an approximation to~$F_t$, we maximize the loglikelihood given in terms of $\mu$, rather than $\theta$, as in
\eqref{lik-mu}. 
First we ignore the fact that the MLE might not exist and  numerically optimize the likelihood directly: we call this estimate $\hat{\mu}^{\MLE}$. Second, we find $F_t$ and compute the EMLE with
parameters denoted $\hat{\mu}^{\EMLE}$. Third, we obtain an inner and outer approximation to $F_t$ and consider the
resulting information on likelihood maximization. We call the resulting estimate~$\hmuFF$.
All estimates are computed using the Matlab function minFunc \citep{minFunc}.

To compute $\hat{\mu}^{\EMLE}$, we first compute the inner approximation $F_{1}$ that makes use of all the separators in
the graph (Strategy \ref{strat:all-separators} in Section~\ref{sec:inner-approximations}).  We also compute an outer
approximation $F_{2}$ from all $\binom{16}{5}=\num{4368}$ size five local models and the cliques of size six
(Strategy~\ref{strat:fixed-cardinality} in Section~\ref{sec:outer-approximations}).  We obtain $F_1=F_2$ and thus deduce
that $F_{t}=F_{1}=F_{2}$. We find $|F_t|=\num{49536}$, and so $|F_t^c|=2^{16}-\num{49536}=\num{16000}$. Therefore,
\num{16000} cell probabilities are zero in the EMLE, a precise estimate of those cells that we could not obtain from the MLE.  We obtain the EMLE by maximizing the loglikelihood function
$l_{F_{t}}$ as in~\eqref{eq:likelihood-on-face}.  Since $\rank(\tilde A_{F_t})=303$, the dimension of $\Fbf_t$ is 302, and
there are only 302 parameters in~$l_F$. This information is most important when testing the present model against another model ${\cal M}_2$ of smaller dimension. As pointed out by \cite{Geyer09:Likelihood_inference_in_exponential_families} and \cite{FienbergRinaldo12:MLE_in_loglinear_models}, the test statistic,  chi-square or  loglikelihood, has to be compared to the chi-square distribution with $302-d_2$ degrees of freedom, not $314-d_2$. Of course, for ${\cal M}_2$ also, $d_2$ is the dimension of the smallest face of the corresponding polytope containing the data.

To show how to use the inner and outer approximations when $F_t$ is not known, we construct coarser inner and outer
approximations to~$F_t$, respectively denoted $F_1'$ and~$F_2'$, and use them to compute another approximation
$\hat{\mu}^{F_2'\setminus F_1'}$ to the EMLE.  To compute $F'_1$, we just use 10 random separators.  We find $|F'_1|=\num{36954}$
and $\dim\Fbf'_{1}=\rank\tilde A_{F'_{1}}-1=300$.  To compute the outer approximation $F'_2$, we consider the $4368$ local size-five induced models and select among them the 1000 with the facial sets of smallest cardinality, which we glue together.
We find $|F'_2|=\num{50688}$ and $\dim\Fbf'_{2}=\rank\tilde A_{F'_{2}}-1=310$.  Thus, we know that at least $|I\setminus
F'_2|=2^{16}-\num{50688}=\num{14848}$ cell probabilities vanish in the EMLE. Since we pretend not to know $F_t$,
we replace $l_{F_t}$ by
\begin{equation}
  \label{eq:like4}
  l_{F'_2}(\mu)=\sum_{i \in I_{+}}\mu_i n(i) -N\sum_{i \in F'_2} \exp(\mu_i).
\end{equation}
We know that $\mu_i$ is estimable for $i\in F'_1$, that $\mu_i$ goes to negative infinity for $i\in F_2^{'c}$, and we cannot
say anything for $\mu_i$ with $i\in F'_2 \setminus F'_1$.

As explained in Section \ref{sec:param-trafos}, the components of $\mu$ are not functionally independent.  We choose
$L_{1}\subseteq F_{1}'$, $L_{2}\subseteq F_{2}'$ and $L\subseteq I$ as in Section~\ref{sec:param-trafos} (we note that
the zero cell belongs to~$I_{+}$).
Then any $\mu_{i}$, $i\in F'_{2}$, can be written as a linear combination of $\mu_{L_{2}}=(\mu_{i},i\in L_{2})$, and we
can write $\mu_i=\< b_i, \mu_{L_2}\>$ for an appropriate vector~$b_i$.  Thus, $l_{F'_{2}}(\mu)$ only depends on
$\mu_{L_{2}}=(\mu_{i},i\in L_{2})$, and~\eqref{eq:like4} can be rewritten as
\begin{equation}
  l_{F'_2}(\mu_{L_2})=\sum_{i \in I_{+}}\<b_i,\mu_{L_2}\> n(i) -N\sum_{i \in F'_2} \exp\<b_i,\mu_{L_2}\>.
  \label{eq:like5}
\end{equation}
 
Of course, the maximum of $l_{F_2'}$ does not exist but, as for the maximization of $l$, the computer still gives us a
numerical approximation, $\hat{\mu}_{L_2}$, and thus also a numerical estimate $\hat{\mu}_{i}=\langle
b_i,\hat{\mu}_{L_2}\rangle, i\in F_2'$.
In total, there are $|L_{2}|=\rank(\tilde A_{F'_2})-1=310$
independent parameters in the loglikelihood function~\eqref{eq:like5}.
Among them, there are $|L_{1}|=\rank(\tilde A_{F'_1})-1=300$ estimable parameters~$\mu_{i},i\in L_{1}$.  We cannot say anything
about the 10 parameters indexed by $L_2 \setminus L_1$.  If we know $F_t$, we can identify two more estimable
parameters. 

Table \ref{table: mle} gives the three estimates $\hat{\mu}_i^{\MLE},
\hat{\mu}_i^{\EMLE}$ and $\hmuFF_{i}$.  The naive estimator $\log \frac{n_i}{n_0}$ is also listed.
Estimates are listed for 19 arbitrarily chosen parameters among the 310 possible ones.
The first column of the table indicates whether the index $i$ belongs to $F_1'$, $F_{t}$ or~$F_{2}'$.
The second column lists the particular parameters considered.  By Theorem \ref{thm:gmle}, the only parameters $\mu_i$ with a finite estimate are those for $i\in F_t$.
This is illustrated in the  $\hat{\mu}^{\EMLE}$ column of Table \ref{table: mle}, with finite values for
$\hat{\mu}_i^{\EMLE}$, $i\in  F_t$ (green and pink rows), and infinite values for $\hat{\mu}_i^{\EMLE}$, $i\in I\setminus F_t$ (yellow and blue rows).  When we choose the coarser approximations, $F_1'$ and $F_2'$, to~$F_t$, we compute the estimate $\hmuFF$  using \eqref{eq:like5}. The components $\hmuFF_i$ indexed by $i\in F_t$ are excellent. They are finite and close to the corresponding components of~$\hat{\mu}^{\EMLE}$.
This can be seen by verifying numerically that the square length of  the projection on $F_t$ of the difference between $\hat{\mu}^{\MLE}$ and $\hat{\mu}^{\EMLE}$ is greater than that between
$\hmuFF$ and $\hat{\mu}^{\EMLE}$. Indeed, we have
\begin{equation*}
\lVert  \hmuFF_{F_t}-\hat{\mu}^{EMLE}_{F_t}\rVert^2 \approx 6.49 < 
\lVert \hat{\mu}^{MLE}_{F_t}-\hat{\mu}^{EMLE}_{F_t}\rVert^2 \approx 8.52\;. 
\end{equation*}
 The components  $\hmuFF_i$ indexed by $i\in F'_2\setminus F_t$   are finite while the corresponding components of $\hat{\mu}^{EMLE}$ are infinite but they are better than those of $\hat{\mu}^{MLE}$: numerically, we have
 \begin{equation*}
  \sum_{i\in\F'_{2}\setminus F_{t}}\left(\hmuFF_{i}\right)^{2} \approx \num{5184}
  > 
  \sum_{i\in\F'_{2}\setminus F_{t}}\left(\hat{\mu}^{\MLE}_{i}\right)^{2} \approx \num{4752}\;.
\end{equation*}
 The estimates $\hmuFF_i, i\in F'_2\setminus F_t$  are better than the corresponding $\hat{\mu}^{\MLE}_{i}$ since they are larger and thus ``closer to the truth''.
 For $i\in I\setminus F'_2$, corresponding to the blue rows of Table \ref{table: mle}, the components
 $\hmuFF_{i}$ are better than the $\mu^{\MLE}_{i}$ since, by construction, the $\hmuFF_{i}$ are infinite.

\begin{table}[t]
  \centering
  \caption{Parameter estimates from 3 methods compared with the relative frequency in the NLTCS data.  Here, each $i=(i_{1},\dots,i_{16})\in I=\{0,1\}^{16}$ is represented by the natural number~$\sum_{j=1}^{16}i_{j}2^{j-1}\in\{0,\dots,2^{16}-1\}$.}
  \begin{tabular}{cccccc} 
    \toprule 
    &           & naive estimate & \multicolumn{3}{c}{maximum likelihood estimates} \\ 
     \multicolumn{2}{r}{Parameter}
    &  $\log n_{i}/n_{0}$ &  $\hat{\mu}^{\MLE}_{i}$ & $\hat{\mu}^{\EMLE}_{i}$ & $\hmuFF_{i}$ \\
    \hline
    \rowcolor{green} $i\in F_{1}'$ &  $\mu_{512}$ 
    &  $-1.2472$  &  $-1.2482$ &  $-1.2482$  &  $-1.2482$ \\
    \rowcolor{green}               & $\mu_{65536}$  &  $-1.7644$  &  $-1.7976$ &  $-1.7975$  &  $-1.7975$ \\ 
    \rowcolor{green}               & $\mu_{16}$    &  $-2.3958$  &  $-2.3844$ &  $-2.3846$  &  $-2.3846$ \\ 
    \rowcolor{green}               & $\mu_{528}$   &  $-2.5429$  &  $-2.6504$ &  $-2.6504$  &  $-2.6504$ \\ 
    \rowcolor{green}               & $\mu_{2048}$  &  $-2.8813$  &  $-2.7246$ &  $-2.7243$  &  $-2.7243$ \\ 
    \hline
    \rowcolor{LRed} $i\in F_{t}\setminus F_{1}'$ &  $\mu_{32960}$ &  $-\infty$ & $-13.8205$ & $-13.8207$  & $-13.8205$  \\ 
    \rowcolor{LRed}                & $\mu_{34881}$ & $-\infty$ & $-14.3693$ & $-14.3693$ &  $-14.3692$  \\ 
    \hline
    \rowcolor{yellow} $i\in F_{2}'\setminus F_{t}$ & $\mu_{36864}$ & $-\infty$  & $-30.8729$  &    $-\infty$ & $-34.9805$ \\ 
    \rowcolor{yellow} & $\mu_{36880}$ & $-\infty$ & $-39.6536$ &   $-\infty$  & $-45.2229$ \\  
    \rowcolor{yellow} & $\mu_{388}$   & $-\infty$ & $-28.9090$ &    $-\infty$ &  $-29.4525$ \\ 
    \rowcolor{yellow} & $\mu_{32769}$ & $-\infty$ & $-32.3799$ &    $-\infty$ & $-36.9537$ \\  
    \rowcolor{yellow} & $\mu_{385}$   & $-\infty$ & $-37.1365$ &   $-\infty$ & $-35.9399$  \\  
    \rowcolor{yellow} & $\mu_{449}$   & $-\infty$ & $-38.9673$ &    $-\infty$ &  $-44.9405$ \\ 
    \rowcolor{yellow} & $\mu_{32785}$ & $-\infty$ & $-40.1221$ &     $-\infty$ & $-45.8318$ \\ 
    \rowcolor{yellow} & $\mu_{389}$   & $-\infty$ & $-43.7297$ &     $-\infty$ &  $-40.0158$ \\
    \hline
    \rowcolor{cyan} $i\in I\setminus F_{2}'$ & $\mu_{256}$ &  $-\infty$ &  $-35.5482$    &  $-\infty$ &      $-\infty$ \\ 
    \rowcolor{cyan} & $\mu_{320}$ & $-\infty$ &  $-42.5454$    &  $-\infty$ &      $-\infty$ \\ 
    \rowcolor{cyan} & $\mu_{257}$ & $-\infty$ &  $-52.9224$    &  $-\infty$ &      $-\infty$ \\ 
    \rowcolor{cyan} & $\mu_{321}$ & $-\infty$ &  $-60.2208$   &  $-\infty$ &      $-\infty$ \\ 
    \hline
  \end{tabular}
  \label{table: mle}
\end{table}

For reference, we list the estimates of the top five cell counts obtained using our method and compare them with those
obtained by other methods in \citet{DobraLenkoski11:Copula_Gaussian_graphical_models} in
Appendix~\ref{S-sec:NLTCS-frequencies}.

\section{Computing faces for large complexes}
\label{sec:large-graphs}

If our statistical model contains many variables and is not reducible,
the problem of determining $\Fbf_{t}$ quickly becomes infeasible.  Not only does the marginal polytope become very
complicated, but also the size of the objects that one has to store or compute grows exponentially.
Consider for example a $10\times10$ grid of binary random variables.  This hierarchical model
has~280 parameters, 
and the total sample space has cardinality $|I|=2^{100}\approx\num{1.27E30}$.
If $\face_{t}$ is close to~$I$, we cannot even list the elements of~$\face_{t}$, which consists of approximately
$10^{30}$ elements.  Therefore, we take a local approach and look for separators.

If the simplicial complex $\Delta$ contains a complete separator separating $V$ into $V_1$ and $V_2$, we can identify a
facial set $F$ implicitly without listing it explicitly.  We only need the two projections $F_{V_{1}}=\pi_{V_{1}}(F)$
and $F_{V_{2}}=\pi_{V_{2}}(F)$.  Since $F=\pi_{V_{1}}^{-1}(F_{V_{1}})\cap\pi_{V_{2}}^{-1}(F_{V_{2}})$ (by
Lemma~\ref{lem:reducible-facial}), these two projections identify~$F$, and they allow us to do most of the operations
that we would want to do with~$F$.  For example, for any $i\in I$, we can check whether $i\in F$ by checking whether
$\pi_{V_{1}}(i)\in F_{V_{1}}$ and $\pi_{V_{2}}(i)\in F_{V_{2}}$, and we can check whether $F\supseteq I$ by checking whether
$F_{V_{1}}\supseteq I_{V_{1}}$ and~$F_{V_{2}}\supseteq I_{V_{2}}$.  In particular, we can check whether the MLE exists by looking
only at the two subsets $V_{1}$ and~$V_{2}$.

Similar ideas apply if $\Delta$ contains a separator that is not complete.
Suppose that $S$ separates $V_{1}$ from $V_{2}$ in~$\Delta$.  We want to use
$F_{2}:=\face_{\Delta|_{V_{1}}}(I_{+})\cap\face_{\Delta|_{V_{2}}}(I_{+})$ as an outer approximation and
$F_{1}:=\face_{\Delta_{S}}(I_{+})$ as an inner approximation to~$F_{t}$.  Due to the
problems mentioned above, we do not directly compute $F_1$ and~$F_2$, but we compute their projections on $V_1$ and~$V_2$.
Instead of~$F_{2}$, we compute the facial set $F_{2,V_{1}}:=\face_{\Delta|_{V_{1}}}(\pi_{V_{1}}(I_{+}))$ of the
$V_{1}$-marginal $\pi_{V_{1}}(I_{+})$ with respect to~$\Delta|_{V_{1}}$.  Similarly we
compute~$F_{2,V_{2}}:=\face_{\Delta|_{V_{2}}}(\pi_{V_{2}}(I_{+}))$.  Instead of~$F_{1}$, we compute
$F_{1,V_{1}}:=\face_{\Delta_{S}|_{V_{1}}}(\pi_{V_{1}}(I_{+}))$
and~$F_{1,V_{2}}:=\face_{\Delta_{S}|_{V_{2}}}(\pi_{V_{2}}(I_{+}))$.  Then we could recover $F_1$ and $F_2$ from the
equations
\begin{equation*}
  F_{2} = \pi_{V_{1}}^{-1}(F_{2,V_{1}})\cap\pi_{V_{2}}^{-1}(F_{2,V_{2}})
  \quad\text{ and }\quad
  F_{1} = \pi_{V_{1}}^{-1}(F_{1,V_{1}})\cap\pi_{V_{2}}^{-1}(F_{1,V_{2}}).
\end{equation*}
For any $x\in I$, we can check whether $x\in F_{1}$ by checking whether $\pi_{V_{1}}(x)\in F_{1,V_{1}}$ and
$\pi_{V_{2}}(x)\in F_{1,V_{2}}$.  More importantly, we can check whether $F_{1}=F_{2}$ by checking whether
$F_{1,V_{1}}=F_{2,V_{1}}$ and~$F_{1,V_{2}}=F_{2,V_{2}}$.
This idea can be applied iteratively when either $\Delta|_{V_{1}}$ or~$\Delta|_{V_{2}}$ has a separator.

The next two subsections illustrate these ideas.  In Section \ref{sec:US-senate}, we consider a graph with no particular
regularity pattern on 50 nodes and identify convenient separators.
In Section \ref{sec:5x10}, we consider a grid graph and work with two families of ``parallel'' separators that can be
used to iteratively improve the inner approximation.

\subsection{US Senate Voting Records Data}
\label{sec:US-senate}

\begin{figure}[th]
  \centering
  \includegraphics[scale=0.8]{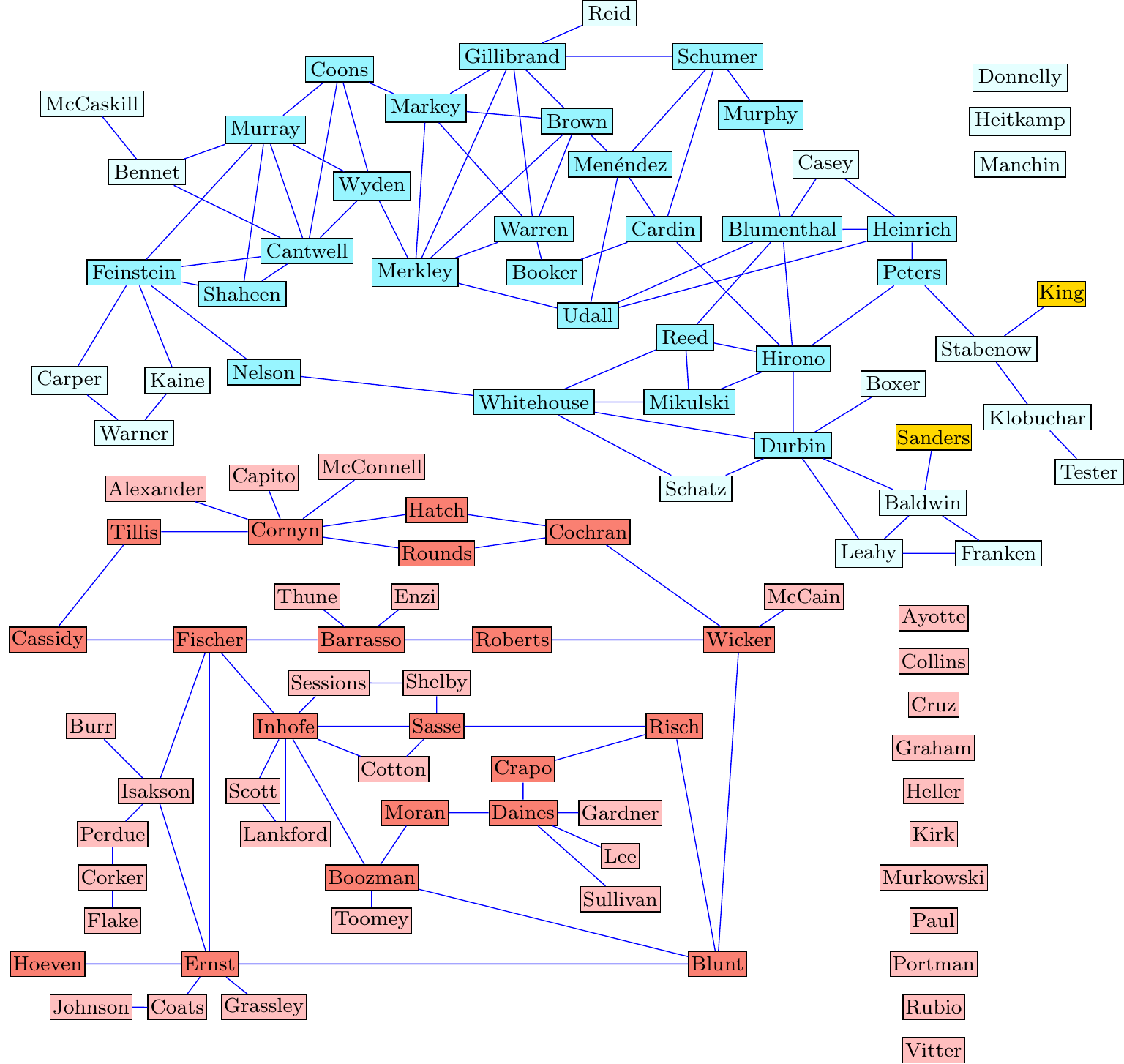}
  \caption{The graph for the US senate voting records data.  Golden nodes are independent senators, blue nodes are Democratic, and red nodes are Republican.}
  \label{voting}
\end{figure}

We consider the voting record of all 100 US senators on 309 bills from January 1 to November 19 2015.  Similar data for
the years 2004--2006 was analyzed by \citet{Banerjee08:model_selection}. The votes are recorded as ``yea,'' ``nay'' or
``not voting.''  We transformed the ``not voting'' into ``nay'' and consequently have a 100-dimensional binary data
set. To fit a hierarchical model to this data set, we use the $\ell_1$-regularized logistic regression method proposed
by \citet{Ravikumar10:logistic_regression} to identify the neighbours of each variable and construct an Ising model.
We set the regularization  parameter to $\lambda=32\sqrt{{\log p}/{n}} \approx 0.35$. 
The underlying graph of the Ising model is given in Figure~\ref{voting}.
This figure should not be interpreted as the graph of a graphical model.  Rather, the edges in the graph indicate where the two-way interactions lie.
There are 277 parameters in this model (the number of vertices plus the number of edges). The graph consists of two separate 
large connected components and 14 independent nodes.

There are 309 sample points, and $|I_+|=278$.  We want to characterize the face $\Fbf_{t}$ of the data on the marginal polytope.
The graph in Figure~\ref{voting} has many complete separators, and it decomposes as a union of several small
  irreducible simplicial subgraphs and two large irreducible subgraphs, one in each of the large connected components,
  as shown in Figure~\ref{fig:two-parties}.  By Lemma~\ref{lem:reducible-facial}, we can restrict attention to these
  irreducible subgraphs.  For the small irreducible subgraphs, one easily verifies that the data does not lie on a
  proper face of their corresponding marginal polytopes.  We are left with the two large irreducible prime components in
  Figure~\ref{fig:two-parties}.

\begin{figure}[thp] 
  \subfloat[\label{fig:voting-r}]{\includegraphics[ width=.5\linewidth]{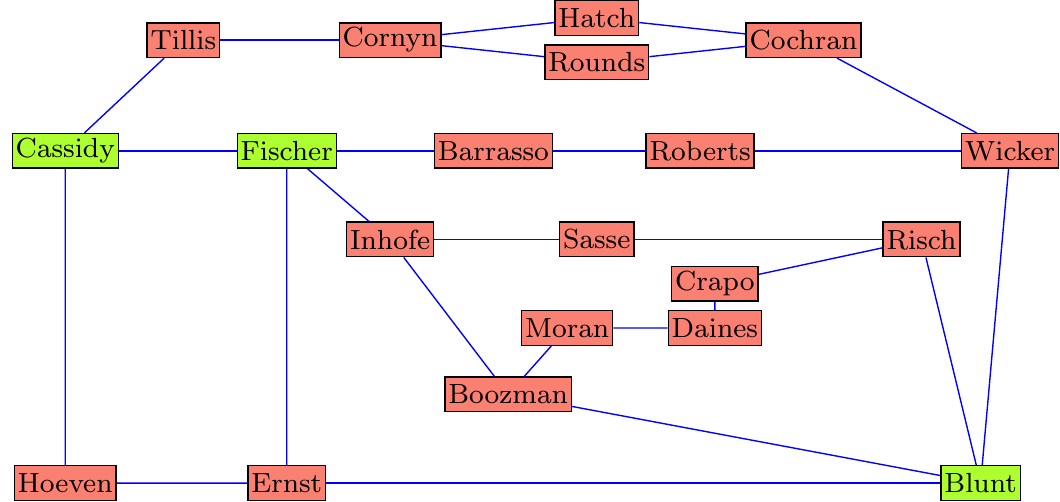}}\hfill
  \subfloat[\label{fig:voting-d}]{\includegraphics[ width=.5\linewidth]{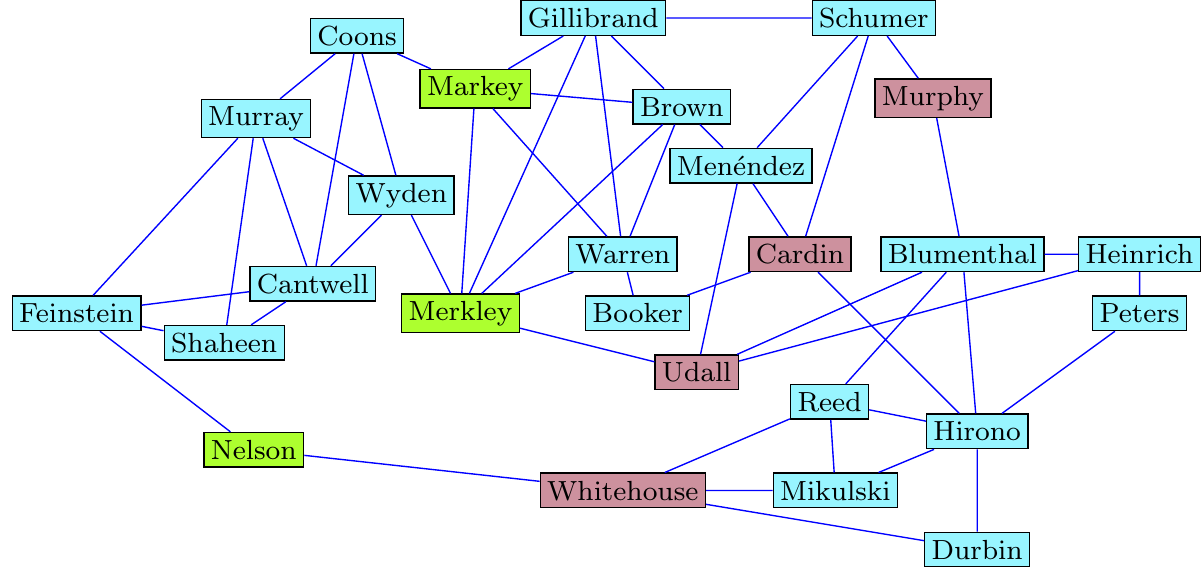}}
\caption{The simplicial complexes after cutting off the small prime components: \textup{(\protect\subref*{fig:voting-r})}~the Republican party prime component~$\Delta_r$.  \textup{(\protect\subref*{fig:voting-d})}~the Democratic party prime component~$\Delta_d$. The yellow and pink nodes are the two separator sets we used to compute the facial set.}
\label{fig:two-parties}
\end{figure}
 
The Democratic party simplicial complex $\Delta_d$ consists of 26 variables, and the model induced from $\Delta_d$
contains 77 parameters, which is too large to use linear programming to compute the face of
$\Pbf_{\Delta_{d}}$ containing the vector~$t_d$.
Therefore, we look for separators in order to obtain good inner and outer approximations.
Figure \ref{fig:voting-d} indicates (in yellow and pink) two separators that separate $\Delta_d$ into three simplicial complexes denoted, from left to right, by $\Delta_{\alpha}$, $\Delta_{\beta}$ and $\Delta_{\gamma}$ that are small enough for the linear programming method.

$\Delta_{\alpha}$ has 9 nodes. 
The corresponding vector $t_{\alpha}$ lies in the relative interior of~$\Pbf_{\Delta_{\alpha}}$.

\begin{table}
\centering
\begin{tabular}{cl@{\hspace{3\tabcolsep}}cl@{\hspace{3\tabcolsep}}cl@{\hspace{3\tabcolsep}}cl} 
 \toprule
 ID & Senator & ID & Senator & ID & Senator & ID & Senator \\
 \midrule
 22 & Nelson  &   37 & Cardin &   52 & Murphy    & 61 & Whitehouse \\
 23 & Reed    &   41 & Markey &   53 & Hirono    & 87 & Warren \\
 26 & Schumer &   47 & Udall  &   56 & Gillibrand  \\
 \bottomrule
\end{tabular}
\caption{Numbering of some senators}\label{tab:senator-ID}
\end{table}

$\Delta_{\beta}$ has 13 nodes, 
and $t_{\beta}$ lies on a facet $\Fbf_{t_{\beta}}$ of~$\Pbf_{\Delta_{\beta}}$.
To simplify the notation, we denote the 100 senators not by their name but by an integer between 1 and~100. We
only need to identify a few and their numbers are given in Table~\ref{tab:senator-ID}.
The inequality of $\Fbf_{t_{\beta}}$ is
\begin{equation}
  \label{warren}
  t_{87}-t_{56,87} \ge 0,
\end{equation}
where $t_{\text{87}}$ denotes the marginal count of senator Warren voting ``yea'' and $t_{\text{56},\text{87}}$ denotes the marginal counts of both senators Gillibrand and Warren voting ``yea.''

$\Delta_\gamma$ has 11 nodes.
The data vector $t_{\gamma}$ lies on the facet of $\Pbf_{\Delta_{\gamma}}$ with inequality
\begin{equation}
  \label{reed}
  t_{23}-t_{23,53} \ge 0.
\end{equation}
The intersection of the two facets \eqref{warren} and \eqref{reed} gives the outer aproximation $\Fbf_{2,d}$ to $F_{t_{d}}$.

To get an inner approximation, we complete each separator, i.e.\ the yellow vertices are completed and the pink vertices are completed in Figure~\ref{fig:voting-d}.  Denote the three simplicial complexes with complete separators as $\Delta_{\tilde{\alpha}},\ \Delta_{\tilde{\beta}},\ \Delta_{\tilde{\gamma}}$ respectively.  Then $\Delta_{\tilde{d}}=\Delta_{\tilde{\alpha}} \cup \Delta_{\tilde{\beta}} \cup \Delta_{\tilde{\gamma}}$ is a simplicial complex with two complete separators. The smallest face $\Fbf_{t_{\tilde{d}}}$ of the marginal polytope $\Pbf_{\Delta_{\tilde{d}}}$ containing the data vector $t_{\tilde{d}}$ is our inner approximation.  The models of $\Delta_{\tilde{\alpha}}$, $\Delta_{\tilde{\beta}}$, $\Delta_{\tilde{\gamma}}$ and $\Delta_{\tilde{d}}$ include main effects,  two-, three- and four-way interactions.  The dimension of the model induced by $\Delta_{\tilde{d}}$ is  91 (completing the two separators adds 14 parameters to the original model).  We apply the linear programming method to  $\Pbf_{\Delta_{\tilde{\alpha}}}$, $\Pbf_{\Delta_{\tilde{\beta}}}$ and $\Pbf_{\Delta_{\tilde{\gamma}}}$.

The dimension of the model of $\Delta_{\tilde{\alpha}}$ is 27,
and $\Fbf_{t_{\tilde{\alpha}}}$ is a facet with equation
\begin{equation} 
  \label{eq:face1}
  \<g_1,\  t_{\tilde{\alpha}} \> =t_{41}-t_{22,41}-t_{41,70}+t_{22,41,70} = 0.
\end{equation}
It follows that $\{g_1\}$ is a basis of the kernel of~$A_{F_{\tilde{\alpha}}}^{t}$.

The dimension of the model for $\Delta_{\tilde{\beta}}$ is 48.
The face $\Fbf_{t_{\tilde{\beta}}}$ has codimension~5, with defining equations
\begin{equation} 
  \label{eq:face2}
  \begin{cases}
    \<g_2,\  t_{\tilde{\beta}} \> =t_{87}-t_{56,87}=0 \\
    \<g_3,\  t_{\tilde{\beta}} \> = t_{47,52,61}+t_{37,52}-t_{37,52,61}-t_{37,47,52}=0\\
    \<g_4,\  t_{\tilde{\beta}} \> =t_{37, 47 ,52,61}-t_{ 47 ,52,61}=0  \\
    \<g_5,\  t_{\tilde{\beta}} \> =t_{37,52}+t_{26}-t_{26,52}-t_{26,37}=0 \\
    \<g_6,\  t_{\tilde{\beta}} \> =t_{41}-t_{22,41}-t_{41,70}+t_{22,41,70}=0 
  \end{cases}.
\end{equation}
Again, $\{g_2,g_3,g_4,g_5,g_6\}$ is a basis of the kernel of $A_{F_{\tilde{\beta}}}$.

The dimension of the model for $\Delta_{\tilde{\gamma}}$ is 38.
The face $\Fbf_{t_{\tilde{\gamma}}}$ has codimension~3.  It is defined by the equations
\begin{equation} 
  \label{eq:face3}
  \begin{cases}
    \<g_7,\  t_{\tilde{\gamma}} \> = t_{47,52,61}+t_{37,52}-t_{37,52,61}-t_{37,47,52}=0 \\
    \<g_8,\  t_{\tilde{\gamma}} \>  =t_{37, 47 ,52,61}-t_{ 47 ,52,61}=0 \\
    \<g_9,\  t_{\tilde{\gamma}} \> =t_{23}-t_{23,53}=0\;. 
  \end{cases}.
\end{equation}
Again, $\{g_7,g_8,g_9\}$ is a basis of the kernel of $A_{F_{\tilde{\gamma}}}$.

From Lemma~\ref{lem:reducible-facial},  $\Fbf_{t_{\tilde{d}}}=\Fbf_{\tilde{\alpha}} \cap 
\Fbf_{\tilde{\beta}} \cap \Fbf_{\tilde{\gamma}}$, and the equations for $\Fbf_{t_{\tilde{d}}}$ are
\begin{equation} 
  \label{eq:face4}
  \begin{cases}
    \<g'_1,\  t_{\tilde{d}} \> =t_{41}-t_{22,41}-t_{41,70}+t_{22,41,70}=0 \\
    \<g'_2,\  t_{\tilde{d}} \> =t_{87}-t_{56,87}=0 \\
    \<g'_3,\  t_{\tilde{d}} \> = t_{47,52,61}+t_{37,52}-t_{37,52,61}-t_{37,47,52}=0\\
    \<g'_4,\  t_{\tilde{d}} \> =t_{37, 47 ,52,61}-t_{ 47 ,52,61}=0  \\
    \<g'_5,\  t_{\tilde{d}} \> =t_{37,52}+t_{26}-t_{26,52}-t_{26,37}=0 \\
    \<g'_9,\  t_{\tilde{d}} \> =t_{23}-t_{23,53}=0,
  \end{cases}
\end{equation}
where the vectors $g'_1,\dots,g'_9$ are the vectors $g_{1},\dots,g_{9}$ extended to $\R^{91}$ by adding zeros on the
corresponding complementary coordinates (since $g'_1=g'_6$, $g'_3=g'_7$, $g'_4=g'_8$, only six of the
nine equations are needed).  Thus, $\Fbf_{1,d}:=\Fbf_{t_{\tilde d}}$, defined by~\eqref{eq:face4}, is a strict subset of the face~$\Fbf_{2, d}$ defined by \eqref{warren} and~\eqref{reed}.  Next, we 
refine our argument and show that indeed~$\Fbf_{t_{d}}=\Fbf_{2, d}$.

From what we know, it follows that the orthogonal complement of the subspace generated by $\Fbf_{t_{\tilde{d}}}$ is
\begin{equation*}
  G=\Big\{g' \in\R^{91}\Big|g'=k_1g'_1+k_2g'_2+k_3g'_3+k_4g'_4+k_5g'_5+k_9g'_9\Big\}.
\end{equation*}
To describe~$\Fbf_{t_{d}}$, we note that each defining equation of~$\Fbf_{t_{d}}$ is of the form $\<g,t_d\>=0$, where $g$ is orthogonal to~$\Fbf_{t_{d}}$.  For any such $g$, let $g'$ be its extension to a vector in $\R^{91}$ by adding zero components.  Then $g'\perp\Fbf_{t_{\tilde d}}$, which implies that $g'\in G$.
Therefore, we can find  $g$ by finding all vectors $g'\in G$ that vanish on all added components.  This yields a system of linear equations in~$k_{1},\dots,k_{5},k_{9}$.  We claim that all solutions must satisfy
$k_1=k_3=k_4=k_5=0$. Indeed, the coefficient of any triple or quadruple interaction must vanish (since these do not
belong to the original Ising model), which implies $k_1=k_3=k_4=0$, and also the coefficient of $t_{37,52}$ must vanish,
which implies~$k_{5}=0$.  On the other hand, the vectors $g'_{2}$ and~$g'_{9}$ only contain interactions that are
already present in~$\Delta$, and so the coefficients $k_{2}$ and~$k_{9}$ are free.  Thus the equations for $\Fbf_{t_d}$
are
\begin{equation} 
  \label{eq:face5}
  \begin{cases}
    \<g_2,\  t_{\tilde{\beta}} \> =t_{87}-t_{56,87}=0, \\
    \<g_9,\  t_{\tilde{\gamma}} \> =t_{23}-t_{23,53}=0.
  \end{cases}
\end{equation}
This is the same as the outer approximation $\Fbf_{2,d}$.

The Republican simplicial complex $\Delta_r$ consists of 20 variables, and the model induced from $\Delta_r$ contains 46
parameters, 
which is also too large to directly
compute~$F_{t_{r}}$.  The yellow nodes in Figure~\ref{fig:voting-r} separate $\Delta_r$ into two simplicial complexes
denoted (from left to right) by $\Delta_a$ and $\Delta_b$.
To compute the inner approximation, we complete the yellow separator and obtain two new simplicial complexes
$\Delta_{\tilde{a}}$ and $\Delta_{\tilde{b}}$.  With linear programming, we find that the corresponding
vectors $t_{\tilde{a}}$ and $t_{\tilde{b}}$ lie in the relative interior of the polytopes $\Pbf_{\Delta_{\tilde{a}}}$ and
$\Pbf_{\Delta_{\tilde{b}}}$, respectively.  Therefore, $\Fbf_1=\Pbf_{\Delta_r}$, from which
we conclude that the corresponding vector $t_r$ lies in the relative interior of~$\Pbf_{\Delta_r}$.

Thus, the face $\Fbf_{t}$ is now determined: it is characterized by the equalities~\eqref{eq:face5}.
  What insight is there in the knowledge (i) of the non-existence of the MLE and (ii) of the exact
  face $\Fbf_{t}$?  While we have given general remarks in the introduction, let us illustrate here  how knowledge about $\Fbf_{t}$ points to some issues with the statistical analysis that would
  possibly be overlooked if $\Fbf_{t}$ was not known.
  
First, knowing $\Fbf_{t}$, and its defining inequalities, for one model also gives information about other models.
It follows from~\eqref{eq:face5} that the MLE does not exist for any hierarchical model that includes one of the edges
$(23,53)$ or $(56,87)$ (to see this, note that the inequality~\eqref{reed} 
defines a proper face for any model containing the edge~$(23,53)$, since the corresponding sufficient statistics vector
satisfies the equality in ~\eqref{reed}).
Thus, if one wants to find a smaller model,  within
  the realm of hierarchical models, for which the MLE exists, both edges have to be dropped. However, from the data, here, evidence for both
  edges is quite strong, and thus the edges should not be dropped.

Second, let us consider the computation of the EMLE.
  As we know $\Fbf_{t}$, instead of running
an MLE computation for a model with 277 parameters and $2^{100}$ outcomes, we are left with an MLE computation for a model  with $277-2=275$ parameters and
 $|F_{t}| = \frac{9}{16}\cdot 2^{100}$ outcomes, those without the configurations $(X_{23}, X_{53})=(1,0)$ or $(X_{56}, X_{87})=(1,0)$ that all have counts zero.  These numbers are still too large for a direct
computation, even
when taking into account that the EMLE can be computed by restricting to each of the irreducible components. 
So, we turn to an approximate method and compute the  maximum composite likelihood estimate. The maximum composite likelihood estimate  can be obtained by combining estimates from the local conditional likelihoods derived from the distribution of each variable given its neighbours: see for example \cite{liu-ihler2012} or \cite{MassamWang18:Local_parameter_estimation}. Thus the reliability of the maximum composite likelihood estimates depends upon the existence of the maximum in each of the local conditional likelihood. These local conditional likelihoods are derived from the global model built on the entire cell set $I$ and, certainly in practice, without worrying about the existence of the global MLE. Let us consider, for example, the  likelihood obtained from the conditional distribution of $X_{23}$ given its neighbours $X_{19,53,61,78}$.  For convenience, let  $19,23, 53,61,78$ be denoted as $a, b, c, d, e$. This likelihood is the product over all configurations
of $i_{acde}$ in the data set of conditional binomial distributions for the variable $X_b$ and can be written  as
  \begin{equation}
  \label{lcp}
  \prod_{i_{a},i_{b},i_{c},i_{d},i_{e}} p\big(X_{b}=i_{b}\big| X_{acde} = (i_{a},i_{c},i_{d},i_{e})\big)^{n(i_{a},i_{b},i_{c},i_{d},i_{e})},
\end{equation}
where $n(i_{a},i_{b},i_{c},i_{d},i_{e})$ denotes the corresponding marginal cell count.
It is easy to show that the MLE of each $p(X_b=1\mid i_a,i_c, i_{d}, i_{e})$
is the empirical estimate $n(i_a, i_b=1, i_c, i_{d}, i_{e})/n(i_a, i_c,i_{d}, i_{e})$.
In the data set, $n(i_{a},i_b=1,i_c=0, i_{d}, i_{e})=0$ for all $i_{a},i_{d},i_{e}\in\{0,1\}$.
Thus,
$$\hat{p}(X_b=1\mid i_a=1, i_c=0, i_d=1, i_e=1)=\frac{\exp(\hat{\theta}_b+\hat{\theta}_{ab}+\hat{\theta}_{bd}+\hat{\theta}_{be})}{1+\exp(\hat{\theta_b}+\hat{\theta}_{ab}+\hat{\theta}_{bd}+\hat{\theta}_{be})}=0,$$
so that  $\hat{\theta}_b+\hat{\theta}_{ab}+\hat{\theta}_{bd}+\hat{\theta}_{be}=-\infty$, and the MLE of at least some of these parameters, which are the corresponding parameters of the global model,  does not exist.
Now the composite maximum likelihood estimate is obtained by averaging the estimates obtained from various local conditional likelihoods. From the $b$-local conditional model, we also obtain $\hat{p}(X_b=1\mid i_a=1,  i_c=1, i_d=1, i_e=0)=1/2$ and $\hat{p}(X_b=1\mid i_a=0, i_c=1, i_d=0, i_e=1)=4/5$, which yield the linear combinations
\begin{equation}
\label{combi}
\hat{\theta}_b+\hat{\theta}_{bc}+\hat{\theta}_{ab}+\hat{\theta}_{bd}=0,\;\;\hat{\theta}_b+\hat{\theta}_{bc}+\hat{\theta}_{be}=1.4.
\end{equation}

The remarks above are verified numerically. Let $\theta=(\theta_b, \theta_{ab}, \theta_{bc},\theta_{bd},\theta_{be})$, and denote by $l_{\text{local}}(\theta)$ the local conditional likelihood.
  Starting at $\theta_0=(0,0,0,0,0)$  and optimizing \eqref{lcp} in terms of $\theta$ in Matlab, we obtain $l_{\text{local}}(\hat{\theta})=3.88830675$ 
  for the maximum 
  $\hat{\theta}=\hat{\theta}_1\approx(-62.3 ,\  -16.8 ,\    35.2,\   43.9 ,\   28.5)$.  If we change the starting point to $\theta_0=(100,100,100,100,100)$, we obtain $\hat{\theta}_2=(-162.2, -26.1, 91.2,  97.1,  72.4)$ and $l_{\text{local}}(\hat{\theta}_2)=3.88830648$. 
  Clearly, the values for $\hat{\theta}$ are unreliable since the MLE in the local conditional model does not exist. However, both $\hat{\theta}_1$ and $\hat{\theta}_2$ satisfy equations \eqref{combi}.
  One can, of course, obtain estimates of $\theta_{bc}, \theta_{ab}, \theta_{bd},\theta_{be}$ from the local conditional models centered at $c,a,d$ and $e$ respectively but these estimates will not have been obtained through the method of composite likelihood and it remains to study their properties.

This example shows that our method scales well and makes it possible to obtain the face $\Fbf_t$ for very large examples.  It also illustrates how knowing the face $\Fbf_t$ gives us precious information on the reliability of the maximum composite likelihood estimate.

\subsection{The \texorpdfstring{$5\times10$}{5x10}-grid}
\label{sec:5x10}

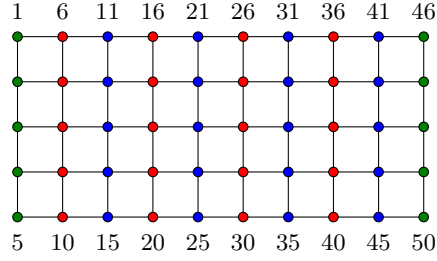
\begin{figure} 
  \centering
  \begin{tikzpicture}[scale=0.6]
    \griddefinenodes{5}{10}
    \griddrawedges{5}{10}
    \foreach \j in {1,10} \foreach \i in {1,...,5} {
      \filldraw[fill=green!50!black] (X\i\j) circle (3pt); }
    \foreach \j in {2,4,...,8} \foreach \i in {1,...,5} {
      \filldraw[fill=red] (X\i\j) circle (3pt);  }
    \foreach \j in {3,5,...,9} \foreach \i in {1,...,5} {
      \filldraw[fill=blue] (X\i\j) circle (3pt); }
    \gridaddlabels{5}{10}
  \end{tikzpicture}
  \caption{$5 \times 10$ grid graph, the red and blue nodes are the set of separators we use to compute~$F_1$, they are used iteratively to get a better lower approximation}
  \label{5by10}
\end{figure}

Let $\Delta$ be the simplicial complex of the $5\times 10$ grid graph.  We exploit the regularity of this
graph and make use of the vertical separators in the grid to obtain inner and outer approximations of the facial
sets.  The graph has 50 nodes, which is too many to directly compute a facial set or even to store it.  However, the
$5\times10$ grid has 8 vertical separators marked in red and blue in Figure~\ref{5by10}, and we can use these to
approximate~$F_t$.  Since facial sets for $5\times 3$ grids can be computed reasonably fast (3 to 4 seconds on a laptop
with \SI{2.50}{GHz} processor 
and \SI{12}{GB} memory), we only use three of these vertical separators at a time, say the blue separators
\begin{equation*}
  S_{2}=\{11,\dots,15\},\;S_{4}=\{21,\dots,25\},\;S_{6}=\{31,\dots,35\},\;S_{8}=\{41,\dots,45\}.
\end{equation*}
These  separate the vertex sets
\begin{gather*}
  V_{1}=\{1,\dots,15\},\; V_{3}=\{11,\dots,25\},\; V_{5}=\{21,\dots,35\},\\
  V_{7}=\{31,\dots,45\},\; V_{9}=\{41,\dots,50\}.
\end{gather*}

\begin{figure} 
  \centering
  \begin{tikzpicture}[scale=0.6]
    \foreach \i in {0,...,3} {
      \begin{scope}[shift={(3*\i,0)}]
        \labelledgrid[10*\i+1]{5}{3}
      \end{scope}}
    \begin{scope}[shift={(12,0)}]
      \labelledgrid[41]{5}{2}
    \end{scope}
  \end{tikzpicture}

  \caption{Five induced sub-grids}
  \label{fig:local5}
\end{figure}
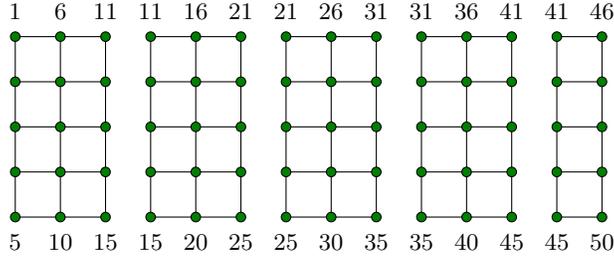

Adding the blue separators to $\Delta$ gives a simplicial complex
\begin{equation*}
\Delta_{S_{2};S_{4};S_{6};S_{8}}:=\Delta\bigcup_{j=2,4,6,8}\{F:F\subseteq S_{j}\}
\end{equation*}
with five irreducible components supported on the vertex sets $V_{1},V_{3},V_{5},V_{7}$ and $V_{9}$
(Figure~\ref{fig:5x10+seps-both}).  To compute a facial set with respect to~$\Delta_{S_{2};S_{4};S_{6};S_{8}}$,
according to Lemma~\ref{lem:reducible-facial} applied four times, we need to compute
\begin{gather*}
  G_{1,V_{1}}:=\face_{\Delta_{S_{2}}|_{V_{1}}}(\pi_{V_{1}}(I_{+})), \quad
  G_{1,V_{3}}:=\face_{\Delta_{S_{2};S_{4}}|_{V_{3}}}(\pi_{V_{3}}(I_{+})), \\
  G_{1,V_{5}}:=\face_{\Delta_{S_{4};S_{6}}|_{V_{5}}}(\pi_{V_{5}}(I_{+})), \quad
  G_{1,V_{7}}:=\face_{\Delta_{S_{6};S_{8}}|_{V_{7}}}(\pi_{V_{7}}(I_{+})), \\
  G_{1,V_{9}}:=\face_{\Delta_{S_{8}}|_{V_{9}}}(\pi_{V_{9}}(I_{+})).
\end{gather*}
Then $G_{1}:=\bigcap_{i}\pi_{V_{i}}^{-1}(G_{1,V_{i}})$ is equal to~$\face_{\Delta_{S_{2};S_{4};S_{6};S_{8}}}(I_{+})$,
and thus an inner approximation of~$F_{t}$. As stated before, we do not need to compute $G_{1}$ explicitly, but we represent it by
means of the~$G_{1,V_{i}}$.  

We can improve the approximations by also considering the red separators 
\begin{equation*}
  S_{1}=\{6,\dots,10\},\;S_{3}=\{16,\dots,20\},\;S_{5}=\{26,\dots,30\},\;S_{7}=\{36,\dots,40\},
\end{equation*}
that separate
\begin{gather*}
  V_{0}=\{1,\dots,10\},\; V_{2}=\{6,\dots,20\},\; V_{4}=\{16,\dots,30\}, \\ 
  V_{6}=\{26,\dots,40\},\; V_{8}=\{36,\dots,50\}.
\end{gather*}
As explained in Section~\ref{sec:inner-approximations},
we want to compute $G^{(2)}_{1} := \face_{\Delta_{S_{1};S_{3};S_{5};S_{7}}}(G_{1})$.  Again, instead of computing $G^{(2)}_{1}$
directly, we need only compute  the much smaller sets
$G^{(2)}_{1,V_{0}}:=\pi_{V_{0}}(G^{(2)}_{1}),G^{(2)}_{1,V_{2}}:=\pi_{V_{2}}(G^{(2)}_{1}),\dots,G^{(2)}_{1,V_{8}}:=\pi_{V_{8}}(G^{(2)}_{1})$.
So the question is: is it possible to compute $G^{(2)}_{1,V_{0}}$, $G^{(2)}_{1,V_{2}}$, \dots, $G^{(2)}_{1,V_{8}}$ from
$G_{1,V_{1}},G_{1,V_{3}},\dots,G_{1,V_{9}}$, without computing $G_{1}$ in between?

It turns out that this is indeed possible: By Lemma~\ref{lem:reducible-facial}, all we need to compute~$G^{(2)}_{1,V_{i}}$
is~$G_{1,V_{j}}:=\pi_{V_{j}}(G_{1}),\;j=i-1,i+1$.  For $i=0$, since $V_{0}\subset V_{1}$, we can compute $G_{1,V_{0}}$ from
$\pi_{V_{1}}(G_{1})=G_{1,V_{1}}$.  For $i=2,4,6,8$, since $V_{i}\subset V_{i-1}\cup V_{i+1}$, we can compute
$G_{1,V_{i}}$ from~$\pi_{V_{i-1}\cup V_{i+1}}(G_{1})$, which itself can be obtained by ``gluing''
$\pi_{V_{i-1}}(G_{1})=G_{1,V_{i-1}}$ and $\pi_{V_{i+1}}(G_{1})=G_{1,V_{i+1}}$:
\begin{equation*}
  \pi_{V_{i-1}\cup V_{i+1}}(G_{1}) = \left(\pi^{V_{i-1}\cup V_{i+1}}_{V_{i-1}}\right)^{-1}(G_{1,V_{i-1}}) \cap \left(\pi^{V_{i-1}\cup V_{i+1}}_{V_{i+1}}\right)^{-1}(G_{1,V_{i+1}}),
\end{equation*}
where $\pi^{V'}_{V''}$ for $V''\subseteq V'$ denotes the marginalization map from $I_{V'}$ to~$I_{V''}$ and where $\Big(\pi^{V'}_{V''}\Big)^{-1}$ denotes the lifting from $I_{V''}$ to $I_{V'}$.

As explained in Section~\ref{sec:inner-approximations}, we have to iterate this procedure: From $G^{(2)}_{1}$ we want to
compute $G^{(3)}_{1}:=\face_{\Delta_{S_{2};S_{4};S_{6};S_{8}}}(G_{1}')$ or, more precisely, we want to compute
$G^{(3)}_{1,V_{i}}=\pi_{V_{i}}(G^{(3)}_{1})$ for~$i=1,3,\dots,9$.  Again, we do this without looking at~$G^{(2)}_{1}$
directly by just using the information available through the~$G^{(3)}_{1,V_{i}}$.  Iterating this procedure, we obtain a
sequence of sets $G^{(k)}_{1,V_{i}}, G^{(k)}_{1,V_{j}}$ (with odd $i$ and even~$j$), which stabilizes after a finite
number of steps.  Let
\begin{equation*}
  F_{1,V_{i}} := 
    \bigcup G^{(k)}_{1,V_{i}},
\end{equation*}
Our best inner approximation is then $F_{1} = \bigcap_{i=0}^{9}\pi_{V_{i}}^{-1}(F_{1, V_i})$.  Again, we do not compute
$F_{1}$ explicitly, but we represent it in terms of the~$F_{1,V_{i}}$.
The process is visualized in Figure~\ref{fig:flow-chart}.

\newcommand{\griddeformtoleft}[1]{  
  \foreach \i in {2,4} {
    \path (#1-0.33,-\i) coordinate (X\i#1);}
  \path (#1-0.5,-3) coordinate (X3#1);}
\newcommand{\griddeformtoright}[1]{  
  \foreach \i in {2,4} {
    \path (#1+0.33,-\i) coordinate (X\i#1);}
  \path (#1+0.5,-3) coordinate (X3#1);}
\newcommand{\gridfillseparator}[2][fill=lightgray]{  
      \fill[#1] (X1#2) -- (X2#2) -- (X3#2) -- (X4#2) -- (X5#2) -- cycle;
      \draw (X1#2) -- (X3#2) -- (X5#2) -- (X2#2) -- (X4#2) -- (X1#2) -- (X5#2);}
\newcommand{\subgridwseparatorsleft}[3][fill=lightgray]{ 
  \griddefinenodes{5}{#2}
  \foreach \j in {#3} {
    \griddeformtoleft{\j}
    \gridfillseparator[#1]{\j}
  }
  \griddrawedges{5}{#2}
  \griddrawnodes{5}{#2}}
\newcommand{\subgridwseparatorsright}[3][fill=lightgray]{ 
  \griddefinenodes{5}{#2}
  \foreach \j in {#3} {
    \griddeformtoright{\j}
    \gridfillseparator[#1]{\j}
  }
  \griddrawedges{5}{#2}
  \griddrawnodes{5}{#2}}
\newcommand{\subgridseparatorlr}[1][fill=lightgray]{ %
  \foreach \i in {1,5} \foreach \j in {1,3} {
    \path (\j,-\i) coordinate (X\i\j);}
  \griddeformtoright{1}
  \foreach \i in {1,...,5} {
    \path (2,-\i) coordinate (X\i2);}
  \griddeformtoleft{3}
  \gridfillseparator[#1]{1}
  \gridfillseparator[#1]{3}
  \griddrawedges{5}{3}
  \griddrawnodes{5}{3}
}

\begin{figure}[ht] 
  \centering
  \subfloat[\label{fig:5x10+sep}]{%
  \begin{tikzpicture}[scale=0.6,font=\small]
    \subgridwseparatorsleft[fill=blue]{10}{3,5,7,9}
    \gridaddlabels{5}{10}
  \end{tikzpicture}}
  \hfil
  \subfloat[\label{fig:5x10-subgraphs+sep}]{%
  \begin{tikzpicture}[scale=0.6,font=\small]
    \subgridwseparatorsleft[fill=blue]{3}{3}
    \gridaddlabels{5}{3}
    \foreach \i in {1,...,3} {
      \begin{scope}[shift={(3*\i,0)}]
        \subgridseparatorlr[fill=blue]
        \gridaddlabels[10*\i+1]{5}{3}
      \end{scope}}
    \begin{scope}[shift={(12,0)}]
      \subgridwseparatorsright[fill=blue]{2}{1}
      \gridaddlabels[41]{5}{2}
    \end{scope}
  \end{tikzpicture}}

  \caption{\textup{(\protect\subref*{fig:5x10+sep})}~The $5\times10$-grid with the blue separators completed.  \textup{(\protect\subref*{fig:5x10-subgraphs+sep})}~The five irreducible subcomplexes after completing the separators.}
  \label{fig:5x10+seps-both}
\end{figure}
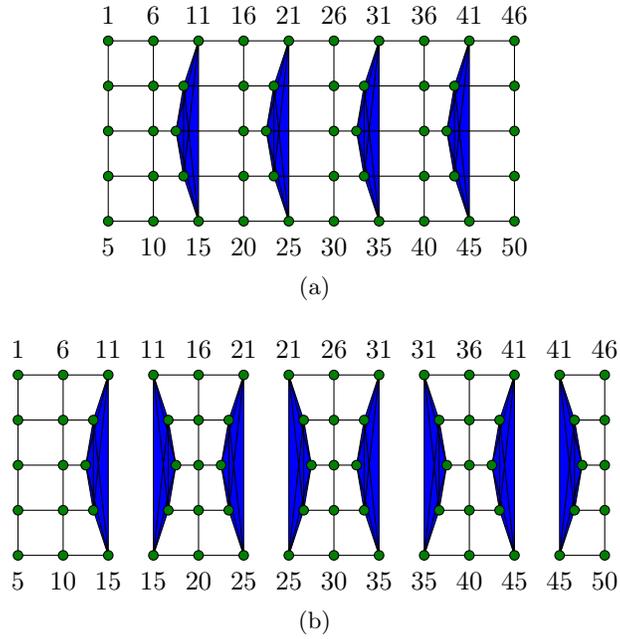

Let us now consider the outer approximation $F_2$.  We adapt Strategy~\ref{strat:all-subgrids} of Section~\ref{sec:outer-approximations} and cover the graph with $5\times 3$ grid subgraphs, since the facial sets for such graphs can easily be computed.  
These subgrids are supported on the same vertex subsets $V_i, i=1,\ldots,8$ as used when computing~$F_{1}$.  This makes
it possible to compare $F_{1}$ and~$F_{2}$.
 For $i=1,3,\dots,8$ we compute
$F_{2,V_{i}}=\face_{\Delta|_{V_{i}}}(\pi_{V_{i}}(I_{+}))$.  The outer approximation is then
$F_{2}=\bigcap_{i}\pi_{V_{i}}^{-1}(F_{2,V_i})$. Again, we don't compute $F_{2}$ explicitly, but
we only store $F_{2,V_{i}}$ in a computer as a representation of~$F_{2}$.
To compare the two approximations $F_1$ and $F_2$, we need only compare their projections $F_{1,V_i}$ and $F_{2,V_i}$ pairwise, $i=1,\ldots, 8$.

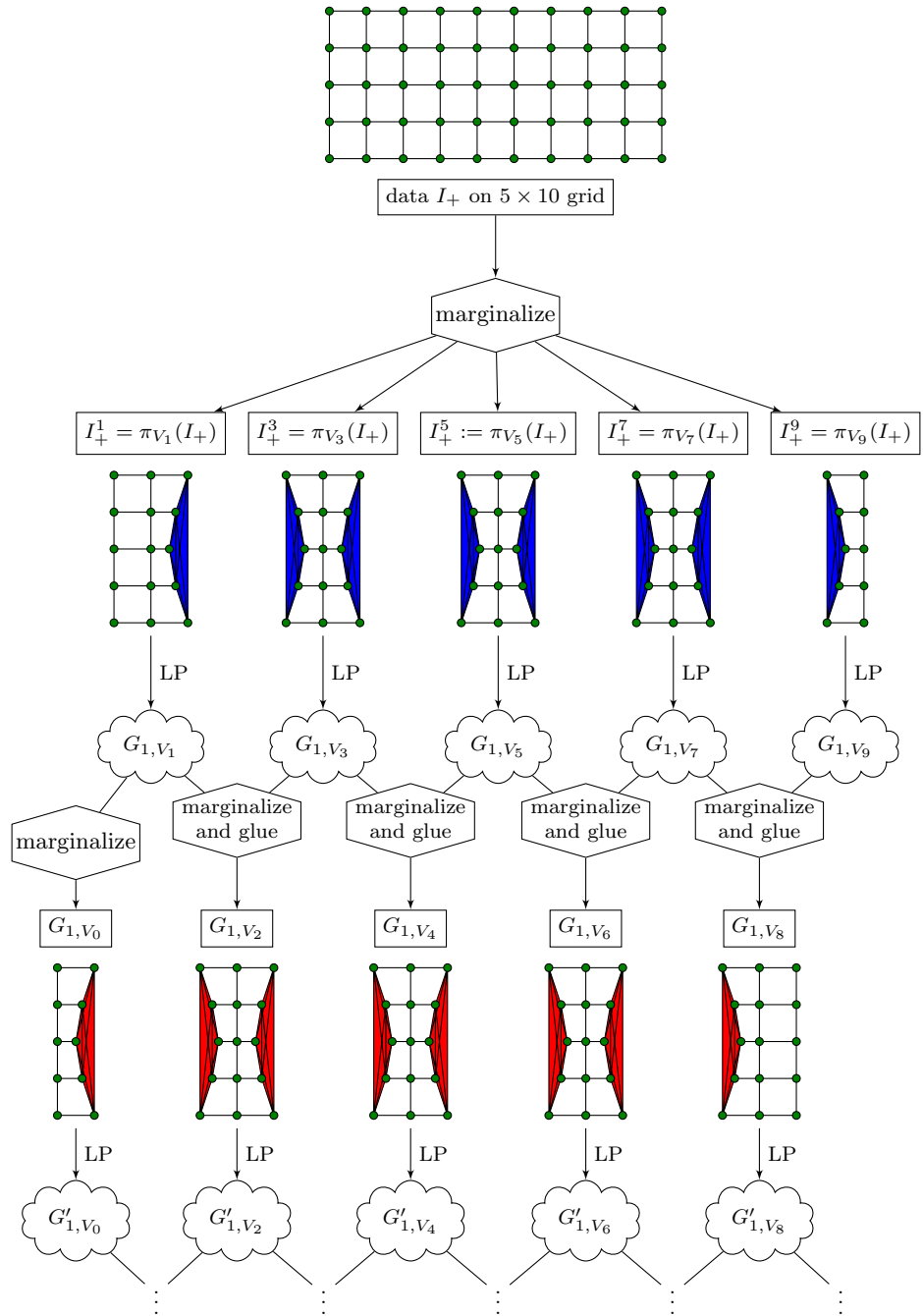
\begin{figure}
  \centering
  \tikzstyle{block} = [rectangle, draw]
  \tikzstyle{cloudy} = [cloud,cloud ignores aspect, draw]
  \tikzstyle{decision} = [regular polygon,regular polygon sides=6,xscale=2,shape border rotate=30,draw,inner sep=1pt]
  \begin{tikzpicture}[node distance=1.6cm,font=\scriptsize]
    \node [block] (start) {data $I_{+}$ on $5\times10$ grid};
    \node [above=0.1 of start] {\tikz[scale=0.5]{\grid{5}{10}}};
    \node [decision, below of=start,minimum width=1cm,label=center:{\footnotesize marginalize}] (marginalization) {}; 
    \node [block, below of=marginalization, xshift=1] (V5) {$I_{+}^{5}:=\pi_{V_{5}}(I_{+})$};
    \node [block, left=0.3 of V5] (V3) {$I_{+}^{3}=\pi_{V_{3}}(I_{+})$};
    \node [block, left=0.3 of V3] (V1) {$I_{+}^{1}=\pi_{V_{1}}(I_{+})$};
    \node [block,right=0.3 of V5] (V7) {$I_{+}^{7}=\pi_{V_{7}}(I_{+})$};
    \node [block,right=0.3 of V7] (V9) {$I_{+}^{9}=\pi_{V_{9}}(I_{+})$};
    \node [below=0.1 of V1] (g1) {\tikz[scale=0.5]{\subgridwseparatorsleft[fill=blue]{3}{3}}};
    \node [below=0.1 of V3] (g3) {\tikz[scale=0.5]{\subgridseparatorlr[fill=blue]}};
    \node [below=0.1 of V5] (g5) {\tikz[scale=0.5]{\subgridseparatorlr[fill=blue]}};
    \node [below=0.1 of V7] (g7) {\tikz[scale=0.5]{\subgridseparatorlr[fill=blue]}};
    \node [below=0.1 of V9] (g9) {\tikz[scale=0.5]{\subgridwseparatorsright[fill=blue]{2}{1}}};
    \node [cloudy,below=1 of g1] (f1) {$G_{1,V_{1}}$};
    \node [cloudy,below=1 of g3] (f3) {$G_{1,V_{3}}$};
    \node [cloudy,below=1 of g5] (f5) {$G_{1,V_{5}}$};
    \node [cloudy,below=1 of g7] (f7) {$G_{1,V_{7}}$};
    \node [cloudy,below=1 of g9] (f9) {$G_{1,V_{9}}$};
    \foreach \i/\j in {1/3,3/5,5/7,7/9} {
      \node [decision,minimum width=1cm,label=center:{\scriptsize\begin{tabular}{c}marginalize\\and glue\end{tabular}}] (M\i\j) at ($(f\i)!0.5!(f\j)-(0,1)$) {};}
    \node [block,below=0.7 of M13] (V2) {$G_{1,V_{2}}$};
    \node [block,below=0.7 of M35] (V4) {$G_{1,V_{4}}$};
    \node [block,below=0.7 of M57] (V6) {$G_{1,V_{6}}$};
    \node [block,below=0.7 of M79] (V8) {$G_{1,V_{8}}$};
    \node [block, left=1.2 of V2] (V0) {$G_{1,V_{0}}$};
    \node [decision,above=0.4 of V0,minimum width=1cm,label=center:{\footnotesize marginalize}] (M1) {}; 
    \node [below=0.1 of V0] (g0) {\tikz[scale=0.5]{\subgridwseparatorsleft[fill=red]{2}{2}}};
    \node [below=0.1 of V2] (g2) {\tikz[scale=0.5]{\subgridseparatorlr[fill=red]}};
    \node [below=0.1 of V4] (g4) {\tikz[scale=0.5]{\subgridseparatorlr[fill=red]}};
    \node [below=0.1 of V6] (g6) {\tikz[scale=0.5]{\subgridseparatorlr[fill=red]}};
    \node [below=0.1 of V8] (g8) {\tikz[scale=0.5]{\subgridwseparatorsright[fill=red]{3}{1}}};
    \node [cloudy,below=0.7 of g0] (f0) {$G'_{1,V_{0}}$};
    \node [cloudy,below=0.7 of g2] (f2) {$G'_{1,V_{2}}$};
    \node [cloudy,below=0.7 of g4] (f4) {$G'_{1,V_{4}}$};
    \node [cloudy,below=0.7 of g6] (f6) {$G'_{1,V_{6}}$};
    \node [cloudy,below=0.7 of g8] (f8) {$G'_{1,V_{8}}$};
    \node (M02) at ($(f0)!0.5!(f2)-(0,1)$) {$\vdots$};
    \node (M24) at ($(f2)!0.5!(f4)-(0,1)$) {$\vdots$};
    \node (M46) at ($(f4)!0.5!(f6)-(0,1)$) {$\vdots$};
    \node (M68) at ($(f6)!0.5!(f8)-(0,1)$) {$\vdots$};
    \node (M8)  at ($(f8)+(1.1,-1)$) {$\vdots$};
    \draw (f3) -- (M13);
    \draw (f5) -- (M35);
    \draw (f7) -- (M57);
    \draw (f9) -- (M79);
    \draw (f0) -- (M02) -- (f2) -- (M24) -- (f4) -- (M46) -- (f6) -- (M68) -- (f8) -- (M8);
    \begin{scope}[-latex']
      \draw (start) -- (marginalization);
      \foreach \i in {1,3,...,9} {
        \draw (marginalization) edge (V\i);}
      \foreach \i in {0,1,...,9} {
         \draw (g\i) -- (f\i) node[midway,right] {LP};}
      \draw (f1) -- (M1)  -- (V0);
      \draw (f1) -- (M13) -- (V2);
      \draw (f3) -- (M35) -- (V4);
      \draw (f5) -- (M57) -- (V6);
      \draw (f7) -- (M79) -- (V8);
    \end{scope}
  \end{tikzpicture}
  \caption{Flow chart}
  \label{fig:flow-chart}
\end{figure}

We generated random data of varying sample size.  For each fixed sample size, we generated 100 data samples.  The
simulation results are shown in Table~\ref{table5by10}.  For each simulated sample, we compute the sets $F_{1,V_{i}}$ and
$F_{2,V_{i}}$ as described above.  When computing $F_{1,V_{i}}$, we found that 2 iterations actually suffice.  Then we
checked whether $F_{2}$ is a proper subset of $I$ (second column), and we checked whether $F_{1}=F_{2}$ (third column).
Both for small and large sample sizes, we found that the $F_{1}=F_{2}$ quite often.
\begin{table} 
  \centering
  \caption{facial set approximation of $5 \times 10$ grid graph}
  \begin{tabular}{ddd}
    \toprule
    \multicolumn{1}{c}{sample size} & 
    \multicolumn{1}{c}{$F_2\neq I$} & \multicolumn{1}{c}{$F_1=F_2$} \\
    \midrule
      50 & 100.0\% & 94.3\% \\
     100 & 100.0\% & 82.5\% \\
     150 &  99.9\% & 76.5\% \\
     200 &  99.6\% & 81.2\% \\
     300 &  96.4\% & 87.7\% \\
     400 &  92.9\% & 91.5\% \\
     500 &  84.8\% & 93.9\% \\
    1000 &  44.7\% & 99.9\% \\
    \bottomrule
  \end{tabular}
  \label{table5by10}
\end{table}

We also investigated what happens when the outer approximation is not computed using all $3\times 5$-subgrids, but only a cover of four $3\times 5$-subgrids and one $2\times5$-subgrid (as in Figure~\ref{fig:local5}).
In all  simulations, this easier approximation gave the same result.  The same is not true for the inner
approximation: when using just one of the two families of parallel separators we obtain an inner approximation that is
much too small.

\section{Conclusion}
\label{sec:conclusion}

As mentioned before, previous work had made it  possible to identify $F_t$ for hierarchical models with up to 16 variables. In this paper, we offer a methodology to approximate, and sometimes, completely identify the facial set $F_t$ for high-dimensional models.  To find an inner and an outer approximation to $F_t$, first, we divided the original problem into subproblems of dimension at most 16 for which we could use linear programming  and, second, we combined the facial sets of the subproblems and related then to $F_t$. Identifying the subproblems and relating the facial sets to $F_t$ is numerically easy and the  corresponding software can be obtained upon request, from the authors.

It has long been established that determining the existence of the MLE is essential to correct statistical inference. In our paper, we have emphasized the problem of parameter estimation and shown how working with the likelihood $l_{F_2}$ yields much better estimates of the parameter that when working with $l$.
When testing one model versus another, the correct degrees of freedom for the asymptotic distribution of the test statistic is the difference between the dimensions of the facial sets for the two models being compared and not the difference between the dimensions of the two models.
If we only know approximations $F_1$ and $F_2$, we can use their dimensions to approximate the correct  degrees of freedom.  

In high dimensions, when the likelihood functions has so many terms that the (E)MLE cannot be computed,
a popular approach is to compute the maximum composite likelihood estimate. We have shown through an example
that,  when the global MLE does not exist, the local MLE for some of the same parameters might not exist either.  So, combining the values of the MLE of local likelihoods without being aware that the data lies on a face of the marginal polytope, one might also obtain misleading estimates of the parameters through composite likelihood.

We have not addressed the question of how to obtain reliable confidence intervals for the parameters by exploiting the properties of the inner and outer approximations to $F_t$. This subject clearly deserves attention and should be the subject of further work.

\appendix

\renewcommand{\theequation}{\thesection\arabic{equation}}

\section{Parametrizing hierarchical models}
\label{S-sec:notation-hier}

In this section, we recall the usual parametrization of hierarchical models, see, for example,
\cite{LetacMassam12:Bayes_regularization}.  The starting point is the parametrization~\eqref{H2prime-redundant} of the
hierarchical model, which we repeat here for convenience:
\begin{equation}
  \label{S-H2prime-redundant}
  \log p(i)=\sum_{D\in \Delta} \theta_D(i_D)
\end{equation}
This parametrization is not identifiable; that is, for any joint distribution~$p$ from the
hierarchical model there are different choices for the functions~$\theta_{D}$ that satisfy~\eqref{S-H2prime-redundant}.
One way to make the parameters unique is to choose a special element within each set $I_v$, which we denote by~0.  The
choice of 0 is arbitrary, and a different choice of 0 leads to a simple affine change of parameters.  With this choice,
the functions~$\theta_{D}$ become unique if one requires $\theta_{D}(i_{D})=0$ whenever $i_{v}=0$ for some~$v\in D$.

A parametrization in terms of real numbers is obtained using the following definitions: for $i\in I,$ we write
\begin{align*}
  S(i)&=\{v\in V\ ; \ i_{v}\neq 0\}, &
  J&=\{ j\in I\setminus\{0\},\ \ S(j)\in \Delta\}.
\end{align*}
For any $j\in J$, let
\begin{equation*}
  \theta_j = \theta_D(i_D)\text{ for the unique }i\in I\text{ with }S(i)=D,\;i_D=j_D.
\end{equation*}
To simplify the notation, we write $j\tl i$ whenever $S(j)\subseteq S(i)$ and $j_{S(j)}=i_{S(j)}$.
It is convenient to introduce the vectors
\begin{equation*}
  f_i= \sum_{j\in J:j\tl i}e_j,\;\;i\in I
\end{equation*}
where $e_j, j\in J$ are the unit vectors in $R^J$.  Moreover, let $A$ be the $J\times I$ matrix with columns~$f_{i}$,
$i\in I$, and let $\tilde A$ be the $(1+|J|)\times I$ matrix with columns equal to $\binom{1}{f_{i}}$, $i\in I$.
Then~\eqref{H2prime-redundant} can be rewritten in the following equivalent forms 

\begin{equation}
  \label{S-HXtilde}
  \log p_{\theta}(i) = \sum_{j\in J: j\tl i}\theta_j - k(\theta)
  = \langle \theta, f_i\rangle - k(\theta)
  = A^{t}\theta - k(\theta) = \tilde A^{t}\tilde\theta,
\end{equation}
where $\tilde{\theta}=(\theta_{0}, \theta)$ as a column vector and 
\begin{equation}
\label{S-all}
 -\theta_0= k(\theta) = \log\Big(\sum_{i\in I}\exp\Big(\sum_{j\in J: j\tl i} \theta_j\Big)\Big)
\end{equation}
acts as a normalization constant.
If $n=(n(i),i\in I)$ denotes the $I$-di\-men\-sional column vector of cell counts, then
\begin{equation}
  \label{S-marginal}
  \tilde{A}n=\left(\begin{array}{c}N\\t\end{array}\right)
  \quad\text{ and }\quad
  A n= t,
\end{equation}
where $N=\sum_{i\in I}n(i)$ is the total cell counts and $t$ is the column vector of sufficient statistic with components equal to the $j_{S(j)}$-marginal counts
$n(j_{S(j)})$, i.e. $t=(t_j, j\in J)$ where $t_j=n(j_{S(j)})=\sum_{i\mid i_{S(j)}=j_{S(j)}}n(i),\; j\in J$.

It follows from \eqref{S-marginal} that $\frac{t}{N}=\sum_{i\in I}\frac{n(i)}{N}f_i$.  Therefore, $t$ belongs to the
convex polytope with extreme points $f_i,i\in I$. This polytope is the \emph{marginal polytope} of the hierarchical
model, denoted by~$\Pbf_\Delta$.

\begin{example}
  \label{S-ex22} 
  Let $V=\{a,b,c\}$, $I_a=\{0,1\}=I_b=I_c$ and $\Delta=\{a,b,c,ab,bc\}$.
  Then
  \begin{gather*}
    I =(000,100,010,110,001,101,011,111), \\
    J=\{(100),(010),(001),(110),(011)\}, \\[16pt]
\tilde{A} = \begin{pmatrix}
\overmat{$f_{000}$}{1} & \overmat{$f_{001}$}{1} & \overmat{$f_{010}$}{1} & \overmat{$f_{011}$}{1} & \overmat{$f_{100}$}{1} & \overmat{$f_{101}$}{1} & \overmat{$f_{110}$}{\enskip1\enskip} & \overmat{$f_{111}$}{1} \\
0 & 1 & 0 & 1 & 0 & 1 & 0 & 1 \\
0 & 0 & 1 & 1 & 0 & 0 & 1 & 1 \\
0 & 0 & 0 & 0 & 1 & 1 & 1 & 1 \\
0 & 0 & 0 & 1 & 0 & 0 & 0 & 1 \\
0 & 0 & 0 & 0 & 0 & 0 & 1 & 1
\end{pmatrix}
\;
\begin{matrix}
\theta_{000} \\
\theta_{100} \\
\theta_{010} \\
\theta_{001} \\
\theta_{110} \\
\theta_{011}
\end{matrix}
\end{gather*}
\end{example}

\section{Example: Two binary random variables}
\label{S-sec:two-binaries}

Consider two binary random variables, and let $\Delta=\{\emptyset, \{1\}, \{2\}, \{1,2\}\}$.  The hierarchical model
$\Ecal_{\Delta}$ is the \emph{saturated model}; that is, it contains all possible probability distributions with full
support.  Then\nopagebreak
\vspace{4mm}
\begin{equation*}
    \tilde A = \begin{pmatrix}
      \overmat{$f_{00}$}{1} & \overmat{$f_{01}$}{1} & \overmat{$f_{10}$}{1} & \overmat{$f_{11}$}{1} \\
      0 & 1 & 0 & 1 \\
      0 & 0 & 1 & 1 \\
      0 & 0 & 0 & 1
    \end{pmatrix}
    \;
    \begin{matrix}
      \theta_{00} \\ \theta_{01} \\ \theta_{10} \\ \theta_{11}
    \end{matrix}
  \end{equation*}
  The marginal polytope is a 3-simplex (a tetrahedron) with facets
  \begin{gather*}
    \Fbf_{00}: 1 - t_{01} - t_{10} + t_{11} \ge 0, \quad
    \Fbf_{01}: t_{01} - t_{11} \ge 0, \\
    \Fbf_{10}: t_{10} - t_{11} \ge 0, \quad
    \Fbf_{11}: t_{11} \ge 0.
  \end{gather*}
  Each of the corresponding facets contains three columns of~$A$.  In fact, the facet $\Fbf_{i}$ in the above
  list does not contain the column $f_{i}$ of~$A$.

  The EMLE of the saturated model is just the empirical distribution; that is, $\gmle=\frac{1}{N}n$.  Suppose that $t$
  lies on the facet~$\Fbf_{00}$ (i.e.~$n=(0,n_{01},n_{10},n_{11})$ with $n(01), n(10), n(11)>0$).  If
  $p_{\theta^{(s)}}\to\gmle$, then $p_{\theta^{(s)}}(00)\to 0$, while all other probabilities converge to a non-zero
  value.  It follows that
  \begin{align*}
    \theta^{(s)}_{00}& =\log p_{\theta^{(s)}}(00)\to-\infty, \\
    \theta^{(s)}_{01}& =\log \frac{p_{\theta^{(s)}}(01)}{p_{\theta^{(s)}}(00)} \to +\infty, \\
    \theta^{(s)}_{10}& =\log \frac{p_{\theta^{(s)}}(10)}{p_{\theta^{(s)}}(00)} \to +\infty, \\
    \theta^{(s)}_{11}& =\log \frac{p_{\theta^{(s)}}(11)p_{\theta^{(s)}}(00)}{p_{\theta^{(s)}}(01)p_{\theta^{(s)}}(10)} \to -\infty.
  \end{align*}
  On the other hand, $\theta^{(s)}_{01}+\theta^{(s)}_{00}=\log p_{\theta^{(s)}}(01)$ converges to a finite value, as do
  $\theta^{(s)}_{10}+\theta^{(s)}_{00}=\log p_{\theta^{(s)}}(10)$ and $\theta^{(s)}_{11}+\theta^{(s)}_{01}=\log
  p_{\theta^{(s)}}(11)/p_{\theta^{(s)}}(10)$.

  Proceeding similarly for the other facets, one can show for the limits
  $\theta_{ij}:=\lim_{s\to\infty}\theta_{ij}^{(s)}$:
  \begin{center}
    \begin{tabular}{lccccc}
      \toprule
      & $\theta_{00}$ & $\theta_{01}$ & $\theta_{10}$ & $\theta_{11}$ & finite parameter combinations:\\
      \midrule
      $\Fbf_{00}$ & $-\infty$ &  $+\infty$ &  $+\infty$ &  $-\infty$ & $\theta^{(s)}_{01}+\theta^{(s)}_{00}$, $\theta^{(s)}_{10}+\theta^{(s)}_{00}$, $\theta^{(s)}_{11}+\theta^{(s)}_{01}$ \\
      $\Fbf_{01}$ &   finite  &  $-\infty$ &    finite  &  $+\infty$ & $\theta^{(s)}_{00}$, $\theta^{(s)}_{10}$, $\theta^{(s)}_{01}+\theta^{(s)}_{11}$ \\
      $\Fbf_{10}$ &   finite  &    finite  &  $-\infty$ &  $+\infty$ & $\theta^{(s)}_{00}$, $\theta^{(s)}_{01}$, $\theta^{(s)}_{10}+\theta^{(s)}_{11}$ \\
      $\Fbf_{11}$ &   finite  &    finite  &    finite  &  $-\infty$ &  $\theta^{(s)}_{00}$, $\theta^{(s)}_{10}$, $\theta^{(s)}_{01}$ \\
      \bottomrule
    \end{tabular}
  \end{center}
  Each line of the last column contains three combinations of the parameters $\theta^{(s)}_{i}$ that converge to a finite value.
 Any other parameter combination that converges is a linear
  combination of these three.  This can be seen by using the coordinates $\mu_{i}$ introduced in
  Section~\ref{sec:param-trafos}.  For example, on the facet~$\Fbf_{01}$,
  consider the parameters
  \begin{gather*}
    \mu_{10} = \log p(10)/p(00) = \theta_{10}, \qquad
    \mu_{11} = \log p(11)/p(00) = \theta_{10}+\theta_{01}+\theta_{11}, \\
    \mu_{01} = \log p(01)/p(00) = \theta_{01}.
  \end{gather*}
  Then $\mu_{10}$ and~$\mu_{11}$ are identifiable parameters on~$\Ecal_{F_{01}}$, and $\mu_{01}$ diverges close
  to~$\Fbf_{01}$.  By Lemma~\ref{lem:mu-properties}, the linear combinations that are well-defined are $\mu_{10}=\theta_{10}$
  and~$\mu_{11}=\theta_{10}+(\theta_{01}+\theta_{11})$.  The above table also lists~$\theta_{00}$, which  is not a linear combination of those but that is fine because it is not  free.

We obtain similar results for the facets $\Fbf_{01}$ and $\Fbf_{11}$. The results are summarized in the following table:
\begin{center} 
  \begin{tabular}{cccc}
    \hline
    facet & $\mu_{01}$ & $\mu_{10}$ & $\mu_{11}$ \\
    \hline
    $\mathbf{F}_{01}$ &  $-\infty$ & finite & finite \\
    $\mathbf{F}_{10}$ &  finite & $-\infty$ & finite \\
    $\mathbf{F}_{11}$ &  finite & finite & $-\infty$ \\
    \hline
  \end{tabular}
\end{center}
Of course, by definition of the $\mu_i$s, we cannot consider the facet $\mathbf{F}_{00}$ where $n(00)=0$.  To study
$\mathbf{F}_{00}$, we have to choose another zero cell and redefine the parameters~$\mu_{i}$.

The situation is more complicated for faces smaller than facets, because sending a single parameter to plus
or minus infinity can be enough to send the distribution to a face $F$ of higher codimension, as we will see below.  The
remaining parameters then determine the position within~$\Ecal_{\Delta,F}$.  Thus, in this case there are more remaining
parameters than the dimension of~$\Ecal_{\Delta,F}$.

  For example, the data vector $n=(n_{00},0,n_{10},0)$ (with $n_{00},n_{10}>0$) lies on the face
  $\Fbf=\Fbf_{01}\cap\Fbf_{11}$ of codimension two.  If $p_{\theta^{(s)}}\to\gmle$, then
  \begin{align*}
    \theta^{(s)}_{00} &= \log p_{\theta^{(s)}}(00) \to \log\frac{n_{00}}{N}, \\
    \theta^{(s)}_{01} &= \log \frac{p_{\theta^{(s)}}(01)}{p_{\theta^{(s)}}(00)} \to -\infty, \\
    \theta^{(s)}_{10} &= \log \frac{p_{\theta^{(s)}}(10)}{p_{\theta^{(s)}}(00)} \to \log\frac{n_{10}}{n_{00}}.
  \end{align*}
  However, the limit of
  $\theta^{(s)}_{11}=\log\frac{p_{\theta^{(s)}}(11)p_{\theta^{(s)}}(00)}{p_{\theta^{(s)}}(01)p_{\theta^{(s)}}(10)}$ is
  not determined.  The only constraint is that $\theta^{(s)}_{11}$ cannot go to $+\infty$ faster than
  $\theta^{(s)}_{01}$ goes to~$-\infty$, since
  $p_{\theta^{(s)}_{11}}=\exp(\theta^{(s)}_{00}+\theta^{(s)}_{01}+\theta^{(s)}_{10}+\theta^{(s)}_{11})$ has to converge
  to zero.

  With the same data vector $n=(n_{00},0,n_{10},0)$, suppose we use a numerical algorithm to optimize the likelihood
  function by optimizing the parameters~$\theta_{j}$ in turn.  To be precise, we order the parameters $\theta_{j}$ in
  some way.  For simplicity, say that the parameters are $\theta_{1},\theta_{2},\dots,\theta_{h}$.  Then we let
  \begin{equation*}
    \theta_{j}^{(k+1)} = \arg\max\limits_{y\in\R} l(\theta_{1}^{(k+1)},\dots,\theta_{j-1}^{(k+1)},y,\theta_{j+1}^{(k)},\dots,\theta_{h}^{(k)})
  \end{equation*}
  (this is called the \emph{non-linear Gauss-Seidel method}).  Let us choose the ordering
  $\theta_{01},\theta_{10},\theta_{11}$ (note that $\theta_{00}=-k(\theta)$ is not a free parameter).  We start at
  $\theta_{01}^{(0)}=\theta_{10}^{(0)}=\theta_{11}^{(0)}=0$.  In the first step, we only look at~$\theta_{01}$.  That
  is, we want to solve
  \begin{multline}
    \label{S-eq:ex:critical-eq}
    0 = \frac{\partial}{\partial\theta_{01}} l(\theta) = - \frac{\exp(\theta^{(1)}_{01}) + \exp(\theta^{(1)}_{01}+\theta^{(0)}_{10}+\theta^{(0)}_{11})}{1 + \exp(\theta^{(1)}_{01}) + \exp(\theta^{(0)}_{10}) + \exp(\theta^{(1)}_{01}+\theta^{(0)}_{10}+\theta^{(0)}_{11})}
    \\
    = - \frac{2\exp(\theta^{(1)}_{01})}{1 + 2\exp(\theta^{(1)}_{01})}.
  \end{multline}
  This derivative is negative for any finite value of~$\theta^{(1)}_{01}$, and thus the critical equation has no
  finite solution.  If we try to solve this equation numerically, we will find that $\theta^{(1)}_{01}$ will be a large
  negative number.  Next, we look at $\theta_{10}$.  We fix the other variables and try to solve
  \begin{multline*}
    0 = \frac{\partial}{\partial\theta_{10}} l(\theta) = \frac{n_{10}}{N} - \frac{\exp(\theta^{(1)}_{10}) + \exp(\theta^{(1)}_{01}+\theta^{(1)}_{10}+\theta^{(0)}_{11})}{1 + \exp(\theta^{(1)}_{01}) + \exp(\theta^{(1)}_{10}) + \exp(\theta^{(1)}_{01}+\theta^{(1)}_{10}+\theta^{(0)}_{11})}
    \\
    \approx \frac{n_{10}}{N} - \frac{\exp(\theta^{(1)}_{10})}{1 + \exp(\theta^{(1)}_{10})},
  \end{multline*}
  where we have used that $\theta^{(1)}_{01}$ is a large negative number.  This equation always has a unique
  solution
  \begin{equation*}
    \theta^{(1)}_{10}\approx \log\frac{n_{10}}{N-n_{10}}.
  \end{equation*}
  Finally, we look at $\theta_{11}$.  We have to solve
  \begin{equation*}
    0 = \frac{\partial}{\partial\theta_{11}} l(\theta) = - \frac{\exp(\theta^{(1)}_{01}+\theta^{(1)}_{10}+\theta^{(1)}_{11})}{1 + \exp(\theta^{(1)}_{01}) + \exp(\theta^{(1)}_{10}) + \exp(\theta^{(1)}_{01}+\theta^{(1)}_{10}+\theta^{(1)}_{11})} \approx 0.
  \end{equation*}
  Actually, this equation again has no solution,
  and the numerical solution for $\theta^{(1)}_{11}$ should be close to numerical minus infinity.  However, since
  $\theta^{(1)}_{01}$ is already close to $-\infty$, the equation is already approximately satisfied.  Thus, there is no
  need to change $\theta_{11}$.  In simulations, we observed that usually $\theta^{(1)}_{11}$ will be negative, but not
  as negative as~$\theta^{(1)}_{01}$.  In theory, we would have to iterate and now optimize~$\theta_{01}$ again.  But
  the values will not change much, since the critical equations are already satisfied to a high numerical precision
  after one iteration.

  It is not difficult to see that the result is different if we change the order of the variables.  If $\theta_{11}$ is
  optimized before~$\theta_{01}$, then $\theta_{11}^{1}$ will in any case be a large negative number.

  For general data, the derivative of $l(\theta)$ with respect to~$\theta_{01}$ (equation~\eqref{S-eq:ex:critical-eq}) takes the form
  \begin{equation*}
    \frac{\partial}{\partial\theta_{01}} l(\theta) = \frac{t_{01}}{N} - \frac{\exp(\theta^{(1)}_{01}) + \exp(\theta^{(1)}_{01}+\theta^{(0)}_{10}+\theta^{(0)}_{11})}{1 + \exp(\theta^{(1)}_{01}) + \exp(\theta^{(0)}_{10}) + \exp(\theta^{(1)}_{01}+\theta^{(0)}_{10}+\theta^{(0)}_{11})}.
  \end{equation*}
  Setting this derivative to zero and solving for~$\theta^{(1)}_{01}$ leads to a linear equation in~$\theta^{(1)}_{01}$
  with symbolic solution
  \begin{equation*}
    \theta^{(1)}_{01} = \log \frac{1+\exp(\theta^{(0)}_{10})}{1 + \exp(\theta^{(0)}_{10}+\theta^{(0)}_{11})}\frac{\frac{t_{01}}{N}}{1 - \frac{t_{01}}{N}}.
  \end{equation*}
  In fact, for any hierarchical model, the likelihood equation is linear in any single parameter~$\theta_{j}$, as long
  as all other parameters are kept fixed (more generally this is true when the design matrix~$A$ is a 0-1-matrix).
  Instead of optimizing the likelihood numerically with respect to one parameter, it is possible to use these symbolic
  solutions.  This leads to the Iterative Proportional Fitting Procedure (IPFP).  In our example, the IPFP would lead to
  a division by zero right in the first step, indicating that the MLE does not exist
  (unfortunately, IPFP does not always fail that quickly when the MLE does not exist).

\section{Parametrizations adapted to facial sets}
\label{S-sec:best-parameters}

Let us briefly discuss how to remedy problems 1.\ (identifiability), 2.\ (relation between parameters on $\Ecal$ and $\Ecal_{F_{t}}$) and 3.\ (cancellation of infinities in linear combinations of diverging parameters) from the beginning of Section~\ref{sec:param-trafos}.  The idea
to remedy 1.\ and 2.\ is to define parameters $\mu_{i}$, $i\in L$, of~$\Ecal_{A}$ such that a subset $L_{t}\subseteq L$
of the parameters parametrizes~$\Ecal_{F_{t},A}$ in a consistent way.  Denote by $A^{\mu}=(a^{\mu}_{j,i}, j\in L, i\in
I)$ the design matrix of $\Ecal_{A}$ corresponding to the new parameters~$\mu$.  Then the necessary conditions are:
\begin{enumerate}
\item[($*$)] Let $A^{\mu}_{L_{t},F_{t}}:=(a^{\mu}_{j,i}, j\in L_{t}, i\in F_{t})$ be the submatrix of~$A^{\mu}$ with
  rows indexed by~$L_{t}$ and columns indexed by~$L_{t}$, and denote by $\tilde A^{\mu}_{L_{t},F_{t}}$ the same matrix
  with an additional row of ones.  The rank of $\tilde A^{\mu}_{L_{t},F_{t}}$ is equal to~$|L_{t}|+1$, the number of
  its rows (and thus, $A^{\mu}_{L_{t},F_{t}}$ has rank~$|L_{t}|$).
\item[($**$)] $a^{\mu}_{j,i}=0$ for all $i\in F_{t}$ and $j\in L\setminus L_{t}$.
\end{enumerate}
In fact, ($**$) implies that $A^{\mu}_{L_{t},F_{t}}$ is the design matrix of~$\Ecal_{A,F_{t}}$, since the parameters
$\mu_{i}$ with $i\notin L_{t}$ do not play a role in the parametrization $\mu\mapsto p_{F_{t},\mu}$.  Moreover, ($*$)
implies that the parametrization $\mu\mapsto p_{F_{t},\mu}$ is identifiable.  In this sense, we have remedied
problem~1.

Since $\tilde A^{\mu}_{L_{t},F_{t}}$ has full row rank, it has a right inverse matrix~$\tilde C$, such that
$\tilde A^{\mu}_{L_{t},F_{t}}\tilde C=I_{|L_{t}|+1}$ equals the identity matrix of size~$|L_{t}|+1$.  Recall that
\begin{align*}
  \log p_{F_{t},\mu}(i) &= \<\mu^{t}, f^{\mu}_{i} \> - k_{F}(\mu), \\
  \log p_{\mu}(i) &= \<\tilde\mu^{t}, f^{\mu}_{i} \> - k(\mu),
\end{align*}
for any parameter vector~$\mu$ and all $i\in F_{t}$.  Since $f^{\mu}_{i}$ are the columns of~$A^{\mu}$ and since the
components of $f^{\mu}_{i}$ corresponding to $L\setminus L_{t}$ vanish by ($**$), we may apply the matrix~$C$ obtained
from $\tilde C$ by dropping the row corresponding to $k_{F}$ or $k$ and obtain
\begin{equation}
  \label{S-eq:mu-inverse}
  (\log p_{\mu})C = \mu_{L_{t}}
  \quad\text{ and }\quad
  (\log p_{F_{t},\mu})C = \mu_{L}.
\end{equation}
When $p_{\mu^{(s)}}$ is a sequence in $\Ecal_{A}$ with limit $p_{\mu}$ in~$\Ecal_{F_{t},A}$,
then~\eqref{S-eq:mu-inverse} shows that $\mu_{i}^{(s)}\to\mu_{i}$ for~$i\in L_{t}$.  In this sense, we have remedied
problem~2.

Finally, we solve problem~3.  Suppose that we have chosen parameters $\mu_{L}$ as in Section~\ref{sec:param-trafos}, and
let $A^{\mu_{L}}$ be the design matrix with respect to these parameters.  Then $(A^{\mu_{L}})_{j,i}=0$ if $i\in F_{t}$
and $j\notin L_{t}$.  Moreover, for $j\in L_{t}$, the $j$th column 
of~$A_{\mu_{L}}$ has a single non-vanishing entry (equal to one) at position~$j$.  Suppose that $F_{t}$ corresponds to a
face $\Fbf_{t}$ of codimension~$c$.  Then there are $c$ facets of $\Pbf$ whose intersection is~$\Fbf_{t}$.  Thus,
following the notation introduced in Remark~\ref{homog}, there exist $c$ inequalities
\begin{equation}
  \label{S-eq:param-trafo-inequalities}
  \<\tilde g_{1}, \tilde x\> \ge 0,\quad\dots,\quad \<\tilde g_{c}, \tilde x\> \ge 0
\end{equation}
that together define~$\Fbf_{t}$.  In this case, the vectors $\tilde g_{1},\dots,\tilde g_{c}$ are linearly independent
and satisfy $\<\tilde g_{j},\tilde f_{i}\>=0$ (thus, they are a basis of the kernel of~$(\tilde
A^{\mu_{L}}_{F_{t}})^{t}$).  It follows that the $k$th component of $g_{j}$, denoted by~$g_{j,k}$, vanishes if $k\in
L_{t}$; that is, the inequalities~\eqref{S-eq:param-trafo-inequalities} do not involve the variables corresponding
to~$L_{t}$.  Let $G$ be the square matrix, indexed by~$L\setminus L_{t}$ with entries $g_{j,k}$, $j,k\in L\setminus
L_{t}$.  Then the square matrix
\begin{equation*}
  \tilde G =
  \begin{pmatrix}
    1 & 0 \\
    0 & G
  \end{pmatrix}
\end{equation*}
is invertible.  We claim that the parameters $\lambda = \tilde G^{-1}\mu_{L}$ are what we are looking for.

The design matrix with respect to the parameters~$\lambda$ is $A^{\lambda}=\tilde G A^{\mu_{L}}$.  For any $j\notin
L_{t}$,
\begin{align*}
  A^{\lambda}_{j,i} & = 0, \quad \text{ if }i\in F_{t}, & \text{and }\quad 
  A^{\lambda}_{j,i} & = \<\tilde g_{j},\tilde f_{i}\> \ge 0, \quad \text{ if }i\notin F_{t}.
\end{align*}
This implies the following properties:
\begin{enumerate}
\item If all parameters $\lambda_{j}$ with $j\notin L_{t}$ are sent to $-\infty$, then $p_{\lambda}$ tends towards a
  limit distribution with support~$F_{t}$.
\item The coefficient of $\lambda_{j}$ in any log-probability is non-negative, so there is no cancellation of
  $\pm\infty$.
\end{enumerate}

So far, we only used the fact that the vectors $\tilde g_{j}$ define valid inequalities for the face~$\Fbf_{t}$.
Suppose that we choose $\tilde g_{j}$ in such a way that each inequality $\<\tilde g_{j},\tilde x\>\ge 0$ defines a
facet.  The intersection of less than $c$ facets is a face that strictly contains~$\Fbf_{t}$.  This implies that for
each~$j$, there exists $i_{j}\in I\setminus F_{t}$ such that $f_{i_{j}}$ satisfies
\begin{align*}
  \<\tilde g_{j}, \tilde f_{i_{j}}\> & > 0, & \text{ and }\quad
  \<\tilde g_{j'}, \tilde f_{i_{j}}\> & = 0 \text{ for all }j'\neq j,
\end{align*}
and so
\begin{align*}
  A^{\lambda}_{j,i_{j}} & > 0, & \text{ and }\quad
  A^{\lambda}_{j'},i_{j} & = 0 \text{ for all }j'\neq j.
\end{align*}
Hence:
\begin{enumerate}[resume]
\item If $\lambda^{(s)}_{j}$ are sequences of parameters such that $p_{\lambda^{(s)}}$ tends towards a limit
  distribution with support~$F_{t}$, then $\lambda^{(s)}_{j}\to-\infty$ for all $j\notin L_{t}$.
\end{enumerate}

It is not difficult to see that, conversely, any parametrization that satisfies these three properties comes from facets
defining the face~$\Fbf_{t}$.

\section{Uniform sampling for the \texorpdfstring{$4\times4$}{4x4}-grid}
\label{S-sec:uniform-4x4}
This section enhances the example in Section~\ref{sec:4x4}.
In a second experiment, we generated random samples from the uniform distribution, that is from the probability distribution $P_{\theta}$
in the hierarchical model where all parameters $\theta_{j}$, $j\in J$, are set to zero. 
For each sample size, \num{1000} samples were obtained.  The results are given in the following table:
\begin{center}
  \begin{tabular}{dddd}
    \toprule
    \multicolumn{1}{c}{sample size} & \multicolumn{1}{c}{MLE does not exist} & \multicolumn{1}{c}{$F_1=F_t$} & \multicolumn{1}{c}{$F_2=F_{t}$} \\  
    \midrule
    10 & 98.5\% &  96.3\% & 100.0\% \\
    15 & 68.9\% &  99.9\% & 100.0\% \\
    20 & 29.0\% & 100.0\% & 100.0\% \\
    50 &  0.0\% & 100.0\% & 100.0\% \\
    \bottomrule
  \end{tabular}
  \label{S-tab:4by4_uniform}
\end{center}
As the table shows, for larger
samples the probability that a random sample lies on a proper face becomes very small.  If $F_{t}=I$, then clearly
$F_{t}=F_{2}$.  But we also found $F_{t}=F_{2}$ for all samples with $t$ lying on a proper face, which shows that
$F_{2}$ is an excellent approximation of $F_{t}$ in this model.  For the inner approximation, we observed some samples
with $F_{1}\neq F_{t}$, but they seem to be very rare.

\section{Estimated cell frequencies for the NLTCS data}
\label{S-sec:NLTCS-frequencies}

The following table lists the estimates of the top five cell counts obtained using our method and compares them with those obtained by other methods in \citet{DobraLenkoski11:Copula_Gaussian_graphical_models}.
\begin{center}
  \small
  \begin{tabular}{cccccc}
    \toprule
    Support of Cell & Observed & GoM & LC &CGGMs & MLE on facial set \\
    \midrule
    $\emptyset$ & 3853 & 3269 & 3836.01 & 3767.76 & 3647.4 \\
    $\{10\}$ & 1107 & 1010 & 1111.51 & 1145.86 & 1046.9 \\
    $\{1:16\}$ & 660 & 612 & 646.39 & 574.76 &  604.4 \\
    $\{5\}$ &  351 & 331 & 360.52 & 452.75 &  336 \\
    $\{5,10\}$ & 303 & 273 & 285.27 & 350.24 &  257.59 \\
    $\{12\}$ & 216 & 202 & 220.47  & 202.12 &  239.24 \\
    \bottomrule
  \end{tabular}
\end{center}



\section{A Linear Programming algorithm to compute facial sets}
\label{S-linear programming}
 
Let $A$ be the design matrix, let $A_{+}$ be the sub-matrix with columns indexed by the positive cells $I_{+}$, and let $A_{0}$ be the sub-matrix indexed by the empty cells (thus $A=(A_{+},A_{0})$ after reordering the columns).
\begin{lemma}
  Any solution $g^*$ of the non-linear problem
  \begin{equation}
    \label{S-lm:face}
    \max \; \lVert Ag \rVert_0
    \qquad\text{subject to }
    \begin{cases}
      A_{+} g=0, \\ 
      A_0 g \geqslant 0,
    \end{cases}
  \end{equation}
  is perpendicular to the smallest face containing~$t$. The corresponding facial set is  $F_t=I \setminus \supp(Ag^*)$.
\end{lemma}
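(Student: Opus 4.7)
The plan is to prove two complementary inequalities: every feasible $g$ in the LP satisfies $\supp(Ag) \subseteq I \setminus F_t$, so $\lVert Ag\rVert_{0} \le |I|-|F_t|$; and this bound is attained, which forces $F_t = I\setminus \supp(Ag^*)$ at any optimizer~$g^*$.

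To interpret feasibility geometrically, I would first observe that a feasible $g$ defines a supporting inequality for the marginal polytope: $\langle g, f_i\rangle \ge 0$ for all $i\in I$, with equality for $i\in I_+$. Since $t = An = \sum_{i\in I_+}n(i)\,f_i$, this forces $\langle g, t\rangle = 0$, so the face $\Fbf_g := \{x \in \Pbf_A : \langle g,x\rangle = 0\}$ of $\Pbf_A$ contains~$t$. By minimality of~$\Fbf_t$, we have $\Fbf_t \subseteq \Fbf_g$, and therefore $(Ag)_i = \langle g,f_i\rangle = 0$ for every $i \in F_t$, giving the upper bound $\supp(Ag) \subseteq I\setminus F_t$.

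For the matching construction, I would use that $\Fbf_t$ is the intersection of all facets $\Fbf_1,\dots,\Fbf_r$ of $\Pbf_A$ containing it. Each such facet is cut out by a feasible vector $g^{(k)}$, whose zero set inside $\{f_i\}_{i\in I}$ coincides with the facial set of $\Fbf_k$. Because the feasible set is a convex cone, the sum $g^* := \sum_{k=1}^{r} g^{(k)}$ is again feasible, and $(Ag^*)_i = \sum_k(Ag^{(k)})_i$ is a sum of non-negative terms: it vanishes for $i\in F_t$ because every facet contains~$\Fbf_t$, and is strictly positive for $i\notin F_t$ because at least one $\Fbf_k$ excludes~$f_i$. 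Hence $\supp(Ag^*) = I\setminus F_t$, matching the upper bound. Combining the two inequalities, every optimizer satisfies $F_t = I\setminus \supp(Ag^*)$, and since $\langle g^*, f_i\rangle = 0$ for all $i\in F_t$, the vector $g^*$ is perpendicular to the linear span of $\Fbf_t$ in the sense asserted.

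The main obstacle to watch out for is a homogenization gap: as written, the LP pins the constant $c$ of any supporting inequality $\langle g, x\rangle \ge c$ to zero, so it a priori misses facets whose supporting hyperplane does not pass through the origin. The remedy, following Remark~\ref{homog}, is to replace $A$ by the augmented matrix $\tilde A$ and to work with $\tilde g = (-c,g)$; all of the arguments above then carry over with $\tilde g$ in place of~$g$, and the conclusion on supports is unaffected because the extra row of ones contributes identically to every column and so does not alter $\supp(Ag^*)$. Getting this bookkeeping right is the one part that deserves care; the rest is a direct translation between LP feasibility and the face structure of~$\Pbf_A$.
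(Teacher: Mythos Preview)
The paper does not actually prove this lemma: it states the result and then says the algorithm ``is introduced in the supplementary material to~[Fienberg--Rinaldo], where it is also proved that it outputs the correct result.'' So there is no in-paper proof to compare against; your argument has to stand on its own.

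Your two-step argument is the right one and is essentially the standard proof. The upper bound (any feasible $g$ defines a supporting hyperplane through $t$, hence vanishes on~$F_t$) is clean. The matching construction via summing facet normals is also correct and is the usual way to exhibit a single $g$ that is strictly positive off~$F_t$.

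You are also right to flag the homogenization issue, and it is not merely cosmetic: with the paper's specific parametrization (Appendix~\ref{S-sec:notation-hier}) the column $f_{0}$ is the zero vector, so the origin is a vertex of~$\Pbf_{A}$. Any facet of $\Pbf_{A}$ that does \emph{not} contain $f_{0}$ then has supporting inequality $\langle g,x\rangle\ge c$ with $c<0$, and no feasible $g$ in the stated LP can pick it up. Concretely, if $I_{+}$ happens to miss the zero cell and $F_{t}$ lies on such a facet, the LP with $A$ (rather than~$\tilde A$) can return too large a set. So the lemma as literally stated is only correct under the convention that the constant row is present; your proposed fix of working with $\tilde A$ and $\tilde g=(-c,g)$ is exactly what is needed. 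One small wording correction: it is not that ``the extra row of ones \dots\ does not alter $\supp(Ag^{*})$''---the supports of $A^{t}g$ and $\tilde A^{t}\tilde g$ genuinely differ---but rather that the entire argument, rerun with $\tilde A$ in place of~$A$, yields $F_{t}=I\setminus\supp(\tilde A^{t}\tilde g^{*})$.
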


The optimization problem \eqref{S-lm:face} is highly non-linear and non-convex.  It can be solved by repeatedly solving the associated $\ell_1$-norm optimization problem:
\begin{equation}
   \label{S-linearface}
   \max \; \lVert A_0g \rVert_1 
   \qquad\text{subject to }
   \begin{cases}
     A_{+} g=0, \\ 
     A_0 g \ge 0, \\ 
     A_0g \le 1.
   \end{cases}
 \end{equation}
 Due to the constraints, $\lVert A_0g \rVert_1 = A_{0}g$, and so Problem~\eqref{S-linearface} is a linear programming
 problem.  It can be solved repeatedly until the smallest facial set $F_t$ is obtained. The process is as follows:
\begin{algorithm}
\caption{Computing $F_{t}$ Linear Programming}
\begin{algorithmic}
\REQUIRE Design matrix $A$ and positive cell index $I_{+}$
\STATE Let $A_{+}:=A(I_{+},:)$, $A_{0}:=A\setminus A_{+}$
\REPEAT{}
\STATE Solve problem \eqref{S-linearface}, get the solution $g^*$ and the corresponding maximum $z^*$
\STATE $A_{0} :=$  the submatrix of $A_0$ with those columns $f_{i}$ of $A_0$ that satisfy $\<f_i,g^*\> =0$.
\UNTIL {$A_0 = \emptyset$ or $z^{*} \not = 0$ }
\IF {$A_0=\emptyset$}
\STATE $F_t:=I_{+}$
\ENDIF
\IF {$z^*=0$}
\STATE $F_t:=I_{+}\cup \big\{i \;\big|\; \text{column }i\text{ of $A$ belongs to }A_0\big\}$
\ENDIF
\end{algorithmic}
\end{algorithm}

The algorithm is introduced in the supplementary material to~\citep{FienbergRinaldo12:MLE_in_loglinear_models}, where it
is also proved that it outputs the correct result.

\section{Some proofs}
\label{sec:S-proofs}

\subsection{Proof of Theorem~\ref{thm:closure-of-expfam}}
\label{S-sec:proof1}

Theorem~\ref{thm:closure-of-expfam} goes back to~\cite{Barndorff:exponential_families}, who studies the closure of much more general exponential
families.  The case of a discrete exponential family is much easier.

The theorem follows from the following lemmas:
\begin{lemma}
  \label{S-lem:p-is-truncation}
  Let $p\in\ol{\Ecal_{A}}$.  Then $p\in\Ecal_{A,\supp(p)}$.
\end{lemma}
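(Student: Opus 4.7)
The plan is to find an explicit $\theta^{*}\in\R^{J}$ such that $p=p_{F,\theta^{*}}$, where $F=\supp(p)$.  Since $p\in\overline{\Ecal_{A}}$, pick a sequence of parameters $\theta^{(s)}\in\R^{J}$ with $p_{\theta^{(s)}}\to p$ pointwise on~$I$ as $s\to\infty$.  Fix a reference cell $j_{0}\in F$, which exists because $p$ is a probability distribution and so $\supp(p)\neq\emptyset$.  The idea is to recover $\theta^{*}$ from the asymptotic log-likelihood ratios $\log(p_{\theta^{(s)}}(i)/p_{\theta^{(s)}}(j_{0}))$ at the cells where $p$ is positive.

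For $i\in F$, set $\mu^{(s)}_{i}=\langle\theta^{(s)},f_{i}-f_{j_{0}}\rangle=\log\bigl(p_{\theta^{(s)}}(i)/p_{\theta^{(s)}}(j_{0})\bigr)$.  Because $p(j_{0})>0$ and $p(i)>0$ for $i\in F$, these quantities converge to the finite limits $m_{i}:=\log(p(i)/p(j_{0}))$.  The crucial observation is that the vector $(\mu^{(s)}_{i})_{i\in F}$ lies, for every $s$, in the linear subspace
\begin{equation*}
  W \;:=\; \bigl\{ \bigl(\langle\theta,f_{i}-f_{j_{0}}\rangle\bigr)_{i\in F} : \theta\in\R^{J} \bigr\} \;\subseteq\; \R^{F}.
\end{equation*}
Since $W$ is a finite-dimensional linear subspace of $\R^{F}$ and hence closed, the limit $m=(m_{i})_{i\in F}$ also lies in $W$.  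Therefore there exists $\theta^{*}\in\R^{J}$ with $\langle\theta^{*},f_{i}-f_{j_{0}}\rangle=m_{i}=\log(p(i)/p(j_{0}))$ for every $i\in F$.

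It remains to verify the formula $p=p_{F,\theta^{*}}$.  Exponentiating, $\exp(\langle\theta^{*},f_{i}\rangle)=(p(i)/p(j_{0}))\exp(\langle\theta^{*},f_{j_{0}}\rangle)$ for $i\in F$, so
\begin{equation*}
  \exp(k_{F}(\theta^{*})) \;=\; \sum_{i\in F}\exp(\langle\theta^{*},f_{i}\rangle) \;=\; \frac{\exp(\langle\theta^{*},f_{j_{0}}\rangle)}{p(j_{0})}\sum_{i\in F}p(i) \;=\; \frac{\exp(\langle\theta^{*},f_{j_{0}}\rangle)}{p(j_{0})},
\end{equation*}
using $\sum_{i\in F}p(i)=\sum_{i\in I}p(i)=1$.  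Plugging back, $p_{F,\theta^{*}}(i)=\exp(\langle\theta^{*},f_{i}-f_{j_{0}}\rangle)\cdot p(j_{0})=p(i)$ for $i\in F$, while $p_{F,\theta^{*}}(i)=0=p(i)$ for $i\notin F$ by definition.  Hence $p=p_{F,\theta^{*}}\in\Ecal_{A,F}$.

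The main obstacle is the closedness step in the middle paragraph: one has to resist the temptation to work with $\theta^{(s)}$ directly (which may well diverge), and instead pass to the bounded linear image $(\mu^{(s)}_{i})_{i\in F}$ whose limit automatically lies in the image of a linear map.  The components $\mu^{(s)}_{i}$ for $i\notin F$ play no role—they typically drift to $-\infty$, which is exactly what forces the corresponding $p(i)$ to vanish, and this is already built into the definition of $p_{F,\theta^{*}}$.
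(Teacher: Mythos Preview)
Your proof is correct.  The paper's argument is shorter but relies on an auxiliary characterization (its Lemma~\ref{S-lem:implicit-ker}): it shows that the restriction $\log p|_{F}$ is orthogonal to $\ker A_{F}$ by extending each $v\in\ker A_{F}$ by zeros to an element of $\ker A$, using $\langle\log p_{k},v'\rangle=0$ for all~$k$, and passing to the limit.  Your route is the dual of this: instead of showing $\log p|_{F}\perp\ker A_{F}$, you show directly that the log-ratio vector $(m_{i})_{i\in F}$ lies in the \emph{image} of the linear map $\theta\mapsto(\langle\theta,f_{i}-f_{j_{0}}\rangle)_{i\in F}$, exploiting that the image of a linear map between finite-dimensional spaces is closed.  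Both arguments rest on the same finite-dimensional closedness fact; yours is more explicit (it actually names a parameter $\theta^{*}$ and checks the normalization), while the paper's packages the existence of $\theta^{*}$ into the cited lemma.  Either way, the essential content---that the possibly divergent $\theta^{(s)}$ must be replaced by the bounded log-ratios before taking limits---is handled cleanly in your middle paragraph.
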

\begin{lemma}
  \label{S-lem:any-truncation}
  Let $p\in\ol{\Ecal_{A}}$.  Then $\Ecal_{A,\supp(p)}\subseteq\ol{\Ecal_A}$.
\end{lemma}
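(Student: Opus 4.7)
By Lemma~\ref{S-lem:p-is-truncation}, $F:=\supp(p)$ is a facial set of $\Pbf_{A}$, so there exist $g\in\R^{J}$ and $c\in\R$ such that $\langle g,f_{i}\rangle=c$ for $i\in F$ and $\langle g,f_{i}\rangle<c$ for $i\in I\setminus F$. Our goal is to show that for every parameter $\theta\in\R^{J}$ the distribution $p_{F,\theta}$ lies in~$\ol{\Ecal_{A}}$; since $q\in\Ecal_{A,F}$ is arbitrary of the form $q=p_{F,\theta}$, this will prove the claim.

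The plan is to construct, for each fixed $\theta$, an explicit approximating sequence in $\Ecal_{A}$. Define
\begin{equation*}
  \theta^{(n)}:=\theta+ng\in\R^{J},\qquad n\in\N,
\end{equation*}
and show that $p_{\theta^{(n)}}\to p_{F,\theta}$ pointwise as $n\to\infty$. Writing
\begin{equation*}
  k(\theta^{(n)})-nc=\log\sum_{i\in I}\exp\bigl(\langle\theta,f_{i}\rangle+n(\langle g,f_{i}\rangle-c)\bigr),
\end{equation*}
we observe that the summands with $i\in F$ are constant in $n$ while the summands with $i\notin F$ decay to zero because $\langle g,f_{i}\rangle-c<0$. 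Hence, by dominated convergence on a finite set,
\begin{equation*}
  k(\theta^{(n)})-nc\;\longrightarrow\;\log\sum_{i\in F}\exp(\langle\theta,f_{i}\rangle)=k_{F}(\theta).
\end{equation*}

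For $i\in F$, this yields
\begin{equation*}
  p_{\theta^{(n)}}(i)=\exp\bigl(\langle\theta,f_{i}\rangle+nc-k(\theta^{(n)})\bigr)\;\longrightarrow\;\exp\bigl(\langle\theta,f_{i}\rangle-k_{F}(\theta)\bigr)=p_{F,\theta}(i),
\end{equation*}
while for $i\notin F$ the exponent $\langle\theta,f_{i}\rangle+n(\langle g,f_{i}\rangle-c)-(k(\theta^{(n)})-nc)$ tends to $-\infty$, so $p_{\theta^{(n)}}(i)\to 0=p_{F,\theta}(i)$. Thus $p_{F,\theta}\in\ol{\Ecal_{A}}$, and since $\theta$ was arbitrary we conclude $\Ecal_{A,F}\subseteq\ol{\Ecal_{A}}$.

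The only nontrivial ingredient is the existence of the separating vector $g$, which follows immediately from the fact that $F$ is facial (equivalently, from Remark~\ref{homog} applied to the homogenized vector $\tilde g=(-c,g)\in\ker\tilde A_{F}^{t}$ whose evaluation is strictly negative on the remaining columns). Once $g$ is in hand, the remainder of the argument is the straightforward asymptotic analysis of $k(\theta^{(n)})-nc$ sketched above, so I do not expect any serious obstacle.
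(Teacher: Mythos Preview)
Your argument is mathematically sound, but you invoke the wrong lemma: Lemma~\ref{S-lem:p-is-truncation} only asserts $p\in\Ecal_{A,\supp(p)}$, not that $\supp(p)$ is facial. The faciality of $\supp(p)$ is Lemma~\ref{S-lem:supp-is-facial}. Fortunately there is no circularity, since the paper's proof of Lemma~\ref{S-lem:supp-is-facial} relies only on Lemma~\ref{S-lem:implicit-ker}, not on the present lemma; so with the corrected citation your proof stands.

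That said, your route differs from the paper's. The paper never invokes faciality here: it takes an approximating sequence $p_{k}\to p$ from $\Ecal_{A}$ and, given any $q=p_{F,\theta}\in\Ecal_{A,F}$, tilts each $p_{k}$ by the fixed exponential factor $\exp(\langle\theta,f_{\cdot}\rangle)$ and renormalizes, obtaining $q_{k}\in\Ecal_{A}$ with $q_{k}\to q$. This uses the hypothesis $p\in\ol{\Ecal_{A}}$ directly and is entirely self-contained. Your approach instead discards the approximating sequence and uses only its consequence that $F$ is facial, then pushes along the supporting direction~$g$. In effect you prove the stronger statement ``$F$ facial $\Rightarrow\Ecal_{A,F}\subseteq\ol{\Ecal_{A}}$'', which also subsumes Lemma~\ref{S-lem:facial-is-supp} (the paper proves that lemma by the same construction with $\theta=0$). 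So the paper's proof keeps the four lemmas logically independent, while yours shortens the overall argument at the cost of making Lemma~\ref{S-lem:any-truncation} depend on Lemma~\ref{S-lem:supp-is-facial}.
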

\begin{lemma}
  \label{S-lem:supp-is-facial}
  Let $p\in\ol{\Ecal_{A}}$.  Then $\supp(p)$ is facial.  
\end{lemma}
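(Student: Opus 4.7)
I would prove the statement by induction on $|I|$, working with a sequence $\theta^{(s)}$ such that $p_{\theta^{(s)}}\to p$ (such a sequence exists by definition of $\ol{\Ecal_A}$, and is the same one appearing in the proof of Lemma~\ref{S-lem:p-is-truncation}). By the row-dropping reduction described after~\eqref{eq:pofi2}, I may also assume that $\tilde A$ has full rank, since this changes neither $\Ecal_A$ nor $\Pbf_A$ nor its facial sets. The base case $|I|=1$ is trivial.

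For the inductive step, the argument splits on whether $\theta^{(s)}$ is bounded. If a bounded subsequence exists, I extract a further convergent subsequence with limit $\theta^*$; then $p=p_{\theta^*}\in\Ecal_A$ has full support, so $\supp(p)=I$ is facial. Otherwise $\|\theta^{(s)}\|\to\infty$, and after passing to a subsequence the unit direction $g^{(s)}:=\theta^{(s)}/\|\theta^{(s)}\|$ converges to some $g$ with $\|g\|=1$. Fixing $i_0\in\supp(p)$ and examining the log-ratios $\langle\theta^{(s)},f_j-f_{i_0}\rangle = \log\bigl(p_{\theta^{(s)}}(j)/p_{\theta^{(s)}}(i_0)\bigr)$, one sees they remain bounded when $j\in\supp(p)$ and diverge to $-\infty$ otherwise. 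Dividing by $\|\theta^{(s)}\|$ and passing to the limit gives $\langle g,f_j-f_{i_0}\rangle=0$ for $j\in\supp(p)$ and $\langle g,f_j-f_{i_0}\rangle\le 0$ for $j\notin\supp(p)$. Consequently $\langle g,\cdot\rangle$ attains its maximum on $\Pbf_A$ along the face $\Fbf$ with facial set $F:=\{j\in I:\langle g,f_j-f_{i_0}\rangle=0\}\supseteq\supp(p)$, and full rank of $\tilde A$ forces $F\subsetneq I$ (otherwise $g$ would annihilate every $f_j-f_{i_0}$ and thus vanish).

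If $F=\supp(p)$ we are done. Otherwise I would recurse into $\Fbf$ via the sub-family $\Ecal_{A_F}$: decomposing $\theta^{(s)}=\alpha^{(s)}g+\beta^{(s)}$ with $\beta^{(s)}\perp g$ and using that $\langle g,f_j\rangle$ is constant on $F$, the renormalized restriction of $p_{\theta^{(s)}}$ to $F$ coincides with $p_{F,\beta^{(s)}}$. Because $\supp(p)\subseteq F$, the mass of $p_{\theta^{(s)}}$ on $F$ tends to $1$, so $p_{F,\beta^{(s)}}\to p$ in $\ol{\Ecal_{A_F}}$. The inductive hypothesis applied to the strictly smaller index set $F$ then says that $\supp(p)$ is facial for $A_F$, i.e.\ is the facial set of a face of $\Fbf$; since a face of $\Fbf$ is also a face of $\Pbf_A$, the claim follows.

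The hard part will be this recursive reduction: verifying that the conditioned sequence $p_{F,\beta^{(s)}}$ genuinely converges to $p$ inside $\ol{\Ecal_{A_F}}$, and making sure that the possible rank-deficiency of $\tilde A_F$ is harmless --- which is exactly what the row-dropping reduction applied at the beginning of each inductive step takes care of.
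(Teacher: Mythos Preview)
Your proof is correct and takes a genuinely different route from the paper's. The paper argues directly and non-inductively: it lets $F=\face_A(\supp(p))$, writes the barycentre $x=\frac{1}{|\supp(p)|}\sum_{i\in\supp(p)}f_i$ as a strictly positive convex combination over all of~$F$, and uses the resulting kernel vector $v\in\ker A$ together with Lemma~\ref{S-lem:implicit-ker} ($\log p_k\perp\ker A$) to force $F\setminus\supp(p)=\emptyset$ in one stroke. Your argument instead extracts a recession direction $g$ from the parameter sequence and peels off one supporting hyperplane at a time, recursing into the smaller face. The paper's proof is shorter and purely algebraic, exploiting the finite kernel characterization that is specific to discrete families; your approach is the classical compactness-and-direction argument from convex analysis, closer in spirit to Barndorff-Nielsen's original treatment, and it has the advantage of producing an explicit supporting functional at each stage, which is sometimes useful and which generalizes more naturally beyond the discrete setting. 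One minor simplification: you do not actually need the orthogonal decomposition $\theta^{(s)}=\alpha^{(s)}g+\beta^{(s)}$, since the renormalized restriction to $F$ already equals $p_{F,\theta^{(s)}}$ regardless of how you split the parameter.
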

\begin{lemma}
  \label{S-lem:facial-is-supp}
  If $F$ is facial, then there exists $p\in\ol{\Ecal_{A}}$ with~$\supp(p)=F$.
\end{lemma}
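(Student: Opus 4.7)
The idea is to construct the required $p$ as a limit of distributions in $\Ecal_A$ obtained by pushing the natural parameter to infinity along a direction dictated by a supporting hyperplane of the face associated with~$F$.

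First, I would invoke the definition of a facial set. Since $F$ is facial, there is a face $\Fbf$ of $\Pbf_A$ with $F = \{i \in I : f_i \in \Fbf\}$. If $\Fbf = \Pbf_A$ then $F = I$ and any $p_\theta \in \Ecal_A$ has support $I$ (because $\exp > 0$), which settles the lemma. Otherwise $\Fbf$ is a proper face, and the standard supporting-hyperplane characterization of faces of polytopes (see Chapter~1 of~\citet{Ziegler98:Lectures_on_Polytopes}) provides $g \in \R^{J}$ and $c \in \R$ such that $\langle g, f_i\rangle \le c$ for every $i \in I$, with equality if and only if $i \in F$.

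Second, I would fix any $\theta_0 \in \R^{J}$ (for concreteness, $\theta_0 = 0$) and consider the sequence of parameters $\theta^{(s)} := \theta_0 + s g$ for $s > 0$. Factoring $\exp(sc)$ out of the numerator and denominator of~\eqref{pofi} gives
\begin{equation*}
  p_{\theta^{(s)}}(i) = \frac{\exp(\langle \theta_0, f_i\rangle)\,\exp(s(\langle g, f_i\rangle - c))}{\sum_{j\in I}\exp(\langle \theta_0, f_j\rangle)\,\exp(s(\langle g, f_j\rangle - c))}.
\end{equation*}
The exponent $s(\langle g, f_i\rangle - c)$ is identically zero for $i \in F$ and strictly negative, tending to $-\infty$, for $i \notin F$. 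A routine limit computation therefore yields $p_{\theta^{(s)}} \to p_{F,\theta_0}$ as $s \to \infty$, where $p_{F,\theta_0}$ is exactly the distribution appearing in~\eqref{eq:p-f-theta} of Theorem~\ref{thm:closure-of-expfam}. Since $p_{F,\theta_0}$ is the pointwise limit of elements of $\Ecal_A$, it belongs to $\overline{\Ecal_A}$, and by construction $\supp(p_{F,\theta_0}) = F$, which is the required conclusion.

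The main (minor) obstacle is recognising that the direction $g$ supplied by the supporting hyperplane is the correct direction in parameter space to concentrate the mass on~$F$: everything then reduces to a standard Laplace-type limit. A small bookkeeping point worth verifying is that the denominator stays bounded away from zero uniformly in $s$, which is automatic because the term $j \in F$ contributes $\exp(\langle \theta_0, f_j\rangle) > 0$. No identifiability assumption on~$A$ is needed, since the argument only exhibits one explicit distribution with the required support and does not require $g$ to lie in the row span of~$A$.
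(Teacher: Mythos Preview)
Your proof is correct and follows essentially the same route as the paper: push the natural parameter to infinity along the normal $g$ of a supporting hyperplane of the face, and observe that the resulting sequence in $\Ecal_A$ converges to a distribution supported exactly on~$F$. The only cosmetic differences are that the paper writes the supporting inequality as $\langle g,f_i\rangle\ge c$ and accordingly takes $\theta^{(s)}=-s\,g$ (with $\theta_0=0$), obtaining the uniform distribution on~$F$ as the limit, whereas you allow a general base point~$\theta_0$ and land on~$p_{F,\theta_0}$; both are equally valid.
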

Indeed, Lemma~\ref{S-lem:p-is-truncation} shows that $\ol{\Ecal_A}\subseteq\bigcup_{F}\Ecal_{A,F}$, where the union is
over all support sets~$F$.  Lemma~\ref{S-lem:any-truncation} shows the converse containment, so that
$\ol{\Ecal_A}=\bigcup_{F}\Ecal_{A,F}$.  It remains to see that a subset $F\subseteq I$ is a support set if and only if
$F$ is facial.  This follows from Lemmas~\ref{S-lem:supp-is-facial} and~\ref{S-lem:facial-is-supp}.

In the proofs of Lemmas~\ref{S-lem:p-is-truncation} to \ref{S-lem:facial-is-supp}, we need the following easy lemma of which
we omit the proof:
\begin{lemma}
  \label{S-lem:implicit-ker}
  $p\in\Ecal_{A}$ if and only if $\log(p)\perp\ker A$.
\end{lemma}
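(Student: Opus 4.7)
The plan is to reformulate membership in $\Ecal_{A}$ as a linear condition on $\log p$, and then read off the desired orthogonality. Since every $p_{\theta}\in\Ecal_{A}$ has $p_{\theta}(i)>0$ for all $i\in I$, the vector $\log p\in\R^{I}$ is well defined, and by~\eqref{eq:pofi2} we have $\log p_{\theta}=\tilde{A}^{t}\tilde{\theta}$ with $\tilde{\theta}=(-k(\theta),\theta)$. So the lemma amounts to showing that $p\in\Ecal_{A}$ iff $\log p$ lies in the row span of~$\tilde{A}$, which is $(\ker\tilde{A})^{\perp}$.

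For the forward direction, I would just note that if $p=p_{\theta}$, then by construction $\log p=\tilde{A}^{t}\tilde{\theta}$ lies in the row span of~$\tilde{A}$. Concretely, for any $v\in\ker A$ with $\sum_{i}v_{i}=0$,
\begin{equation*}
  \<\log p, v\> = \<\theta, Av\> - k(\theta)\sum_{i}v_{i} = 0.
\end{equation*}
For the converse, I would start with a positive probability distribution $p$ satisfying the orthogonality condition, conclude that $\log p$ lies in the row span of~$\tilde{A}$, and write $\log p=A^{t}\theta+\eta_{0}\mathbf{1}$ for some $\eta_{0}\in\R$ and $\theta\in\R^{J}$. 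Exponentiating gives $p(i)=e^{\eta_{0}}\exp\<\theta,f_{i}\>$, and the probability constraint $\sum_{i}p(i)=1$ pins down $\eta_{0}=-k(\theta)$, so $p=p_{\theta}\in\Ecal_{A}$.

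The only subtlety I expect is reconciling the $\ker A$ appearing in the statement with the $\ker\tilde{A}$ that arises naturally in the parametrization. One can either interpret the orthogonality for probability vectors (so the additive normalization is fixed automatically by $\sum_{i}p(i)=1$), or observe that the argument only tests against $v\in\ker A$ with $\sum_{i}v_{i}=0$, i.e.\ $v\in\ker\tilde{A}$. Once this point is dispatched, the equivalence is a direct translation between the exponential form of~$p$ and the linear algebra of the design matrix, and no deeper argument is needed.
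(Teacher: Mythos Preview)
The paper omits the proof of this lemma entirely (it says ``the following easy lemma of which we omit the proof''), so there is nothing to compare against. Your argument is the natural one and is correct: membership in $\Ecal_{A}$ is exactly the condition that $\log p$ lies in the row span of~$\tilde A$, which by linear algebra is $(\ker\tilde A)^{\perp}$. You also correctly flag that the lemma as stated uses $\ker A$ rather than $\ker\tilde A$; strictly speaking the forward implication fails for $v\in\ker A$ with $\sum_{i}v_{i}\neq0$ (for instance, with the explicit $A$ of Appendix~\ref{S-sec:notation-hier} one has $f_{0}=0$, so $e_{0}\in\ker A\setminus\ker\tilde A$ and $\langle\log p,e_{0}\rangle=\log p(0)\neq0$). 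Your resolution---reading the statement as orthogonality to $\ker\tilde A$, equivalently to those $v\in\ker A$ with $\sum_{i}v_{i}=0$---is the right one, and indeed the two applications of the lemma in the paper (Lemmas~\ref{S-lem:p-is-truncation} and~\ref{S-lem:supp-is-facial}) only use such~$v$.
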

\begin{proof}[Proof of Lemma~\ref{S-lem:p-is-truncation}]
  Let $p=\lim_{k\to\infty}p_{k}$, where $p_{k}\in\Ecal_{A}$, and let $F=\supp(p)$.  Then $\Ecal_{A,F}$ is the
  exponential family~$\Ecal_{A_{F}}$, where $A_{F}$ consists of the columns of~$A$ indexed by~$F$.  Any $v\in\ker A_{F}$
  can be extended by zeros to $v'\in\ker A$.  By Lemma~\ref{S-lem:implicit-ker},
  \begin{equation*}
    0 = \langle\log(p_{k}),v'\rangle = \sum_{i\in F}\log(p_{k}(i))v(i) \to \langle\log(p),v\rangle.
  \end{equation*}
  Thus, $\log(p)\perp\ker A_{F}$, which implies~$p\in\Ecal_{A,F}$.
\end{proof}
\begin{proof}[Proof of Lemma~\ref{S-lem:any-truncation}]
  Let $p=\lim_{k\to\infty}p_{k}$, where $p_{k}\in\Ecal_{A}$, let $F=\supp(p)$, and let~$q\in\Ecal_{A,F}$.  Then there
  exists parameters $\theta$ with $\log(q(i))-\log(p(i))=\langle \theta, f_{i}\rangle$ for all~$i\in F$.  For any~$k$,
  there exists $c_{k}>0$ such that $q_{k}:= c_{k}p_{k}\exp(\langle \theta, A\rangle)\in\Ecal_{A}$.
  Then $q_{k}\to q$ as $k\to\infty$, and so $q\in\ol{\Ecal_A}$.
\end{proof}
\begin{proof}[Proof of Lemma~\ref{S-lem:supp-is-facial}]
  Let $p=\lim_{k\to\infty}p_{k}$, where $p_{k}\in\Ecal_{A}$, and let $F=\face_{A}(\supp(p))$.
  Then $x=\frac{1}{|\supp(p)|}\sum_{i\in\supp(p)}f_{i}$ is an interior point of the face corresponding to~$F$, and thus
  there exist positive coefficients~$\lambda_{i}>0$, $i\in F$, with $x=\sum_{i\in F}\lambda_{i}f_{i}$.  The vector
  $v=(v_{i}, i\in I)$ defined by
  \begin{equation*}
    v_{i}=
    \begin{cases}
      \frac{1}{|\supp(p)|}-\lambda_{i}, & i\in\supp(p), \\
      -\lambda_{i}, & i\in F\setminus\supp(p), \\
      0, & i\notin F,
    \end{cases}
  \end{equation*}
  satisfies $Av=x-x=0$.
  By Lemma~\ref{S-lem:implicit-ker}, $\log(p_{k})\perp v$ for all~$k$.  In particular,
  \begin{equation*}
    \sum_{i\in F\setminus\supp(p)}\lambda_{i}\log(p_{k}(i)) = \sum_{i\in \supp(p)}\log(p_{k}(i))v_{i}
    \to \sum_{i\in \supp(p)}\log(p(i))v_{i}.
  \end{equation*}
  On the other hand, note that each coefficient $\lambda_{i}$ for $i\in F\setminus\supp(p)$ on the left hand side is
  positive, while $\log(p_{k}(i))\to-\infty$ for $i\notin\supp(p)$.  This shows that $F\setminus\supp(p)=\emptyset$.
\end{proof}
\begin{proof}[Proof of Lemma~\ref{S-lem:facial-is-supp}]
  If $F$ is facial, there exist $g\in\R^{h}$ and $c\in\R$ with $\langle g,f_{i}\rangle\ge c$ for all $i\in I$ and
  $\langle g,f_{i}\rangle=c$ if and only if~$i\in F$.  Let $\theta^{(s)}=-s\cdot g$.  Then
  \begin{equation*}
    k_{F}(\theta_{(s)}) + sc = \log \sum_{i\in I}\exp(- s\langle g,f_{i}\rangle + sc) 
    \to \log|F|,
  \end{equation*}
  and so
  \begin{multline*}
    \log p_{\theta^{(s)}}(i) = - s \langle g, f_{i}\rangle - k_{F}(\theta_{(s)}) 
    = (sc - s \langle g, f_{i}\rangle) - (k_{F}(\theta_{(s)}) + sc)
    \\
    \to
    \begin{cases}
      - \log|F|, & \text{ if } i\in F, \\
      -  \infty, & \text{ if } i\notin F,
    \end{cases}
  \end{multline*}
  as $s\to\infty$.  Thus, $p_{\theta^{(s)}}$ converges to the uniform distribution on~$F$.
\end{proof}

\subsection{Proof of Theorem~\ref{thm:gmle}}
\label{S-sec:proof2}

By definition, any EMLE $\gmle$ belongs to the closure of the model.  According to Theorem~\ref{thm:closure-of-expfam},
the support of $\gmle$ is facial.  If $\supp(p)$ does not contain~$\supp(n)$, then the log-likelihood goes to minus infinity, $\tilde
l(p)=-\infty$, and so~$p$ does not maximize the likelihood, Therefore, $\supp(\gmle)$ is a facial set containing~$\supp(n)$.
Thus, $F_{t}\subseteq\supp(\gmle)$.

By Lemma~\ref{S-lem:p-is-truncation}, $\gmle$ belongs to $\Ecal_{\Delta,\supp(\gmle)}$, which is parametrized by a
vector~$\theta$, see Theorem~\ref{thm:closure-of-expfam}.
On $\Ecal_{\Delta,\supp(\gmle)}$, the log-likelihood function in terms of this parameter $\theta$ is
\begin{equation*}
  l_{F}(\theta) = \sum_{j\in J}\theta_jt_j-N k_{F}(\theta).
\end{equation*}
The critical equations satisfied by $p_{*}$ are
\begin{equation*}
  A \gmle = \frac{t}{N},
\end{equation*}
proving the first property.  Note that these equations are independent of the parameters and the support of~$\gmle$.  We
now show that any solution to these equations is supported on the same face of~$\Pbf$ as~$\frac{t}{N}$.

Let $p$ be a probability distribution on~$I$ such that $\supp(p)$ does not contain~$F_t$.  This means that there is a linear inequality $\langle g,t\rangle\ge c$
that is valid on~$\Pbf$ and such that
(i) $\langle g,f_{i}\rangle=c$ for all~$i\in F_{t}$; and
(ii) $\langle g,f_{i}\rangle>c$ for some~$i\in\supp(p)$.
Then
\begin{equation*}
  \langle g, A p\rangle = \sum_{i}\langle g,f_{i}\rangle p(i) > c
  = \frac{1}{N}\sum_{i}n(i) \langle g, f_{i}\rangle
  = \langle g, \frac{t}{N}\rangle,
\end{equation*}
which implies $Ap\neq \frac{t}{N}$.  This shows $\supp(\gmle)\subseteq F_t$ and finishes the proof of $\supp(\gmle)=F_{t}$.

We have shown the two properties, and it remains to argue that the EMLE is unique.  But this follows from the fact that
$\supp(\gmle)$ is equal to~$F_{t}$, and $l_{F}$ is strictly convex (up to identifiability), such that there is a unique
distribution on~$\Ecal_{\Delta,F_{t}}$ that maximizes the likelihood.

\bibliographystyle{plainnat}
\bibliography{main}

\begin{thebibliography}{25}
\providecommand{\natexlab}[1]{#1}
\providecommand{\url}[1]{\texttt{#1}}
\expandafter\ifx\csname urlstyle\endcsname\relax
  \providecommand{\doi}[1]{doi: #1}\else
  \providecommand{\doi}{doi: \begingroup \urlstyle{rm}\Url}\fi

\bibitem[Banerjee et~al.(2008)Banerjee, El~Ghaoui, and
  d'Aspremont]{Banerjee08:model_selection}
O.~Banerjee, L.~El~Ghaoui, and A.~d'Aspremont.
\newblock Model selection through sparse maximum likelihood estimation for
  multivariate gaussian or binary data.
\newblock \emph{The Journal of Machine Learning Research}, 9:\penalty0
  485--516, 2008.

\bibitem[Barndorff-Nielsen(1978)]{Barndorff:exponential_families}
O.~Barndorff-Nielsen.
\newblock \emph{Information and Exponential Families}.
\newblock Wiley, Chichester, first edition, 1978.

\bibitem[Csisz{\'a}r and Mat{\'u}{\v s}(2008)]{CsiszarMatus08:EMLE}
I.~Csisz{\'a}r and F.~Mat{\'u}{\v s}.
\newblock Generalized maximum likelihood estimates for exponential families.
\newblock \emph{Probability Theory and Related Fields}, 141:\penalty0 213--246,
  2008.

\bibitem[Csisz{\'a}r and
  Shields(2004)]{CsiszarShields04:Information_Theory_and_Statistics}
I.~Csisz{\'a}r and P.~Shields.
\newblock \emph{Information Theory and Statistics: A Tutorial}.
\newblock Foundations and Trends in Communications and Information Theory. now
  Publishers, first edition, 2004.

\bibitem[Deza and Laurent(2009)]{DezaLaurent09:Cuts_and_metrics}
M.~Deza and M.~Laurent.
\newblock \emph{Geometry of Cuts and Metrics}.
\newblock Springer Publishing Company, Incorporated, 1st edition, 2009.
\newblock ISBN 3642042945, 9783642042942.

\bibitem[Dobra and
  Lenkoski(2011)]{DobraLenkoski11:Copula_Gaussian_graphical_models}
A.~Dobra and A.~Lenkoski.
\newblock Copula {G}aussian graphical models and their application to modelling
  functional disability data.
\newblock \emph{Ann. Appl. Stat.}, 5\penalty0 (2A):\penalty0 969--993, 06 2011.

\bibitem[Dobra et~al.(2003)Dobra, Erosheva, and
  Fienberg]{DobraEroshevaFienberg03}
A.~Dobra, E.~Erosheva, and S.~Fienberg.
\newblock Disclosure limitation methods based on bounds for large contingency
  tables with application to disability data.
\newblock In \emph{Proceedings of conference on the new frontiers of
  statistical data mining}, pages 93--116, 2003.

\bibitem[Eriksson et~al.(2006)Eriksson, Fienberg, Rinaldo, and
  Sullivant]{EFRS06:Polyhedral_conditions_MLE}
N.~Eriksson, S.~Fienberg, A.~Rinaldo, and S.~Sullivant.
\newblock Polyhedral conditions for the nonexistence of the {MLE} for
  hierarchical log-linear models.
\newblock \emph{J. Symbolic Comput.}, 41:\penalty0 222--233, 2006.

\bibitem[Fienberg(2007)]{Fienberg:categorical}
S.~Fienberg.
\newblock \emph{The analysis of cross-classified categorical data}.
\newblock Springer-Verlag, New York, second edition, 2007.

\bibitem[Fienberg and Rinaldo(2007)]{FienbergRinaldo07:three_centuries}
S.~Fienberg and A.~Rinaldo.
\newblock Three centuries of categorical data analysis: log-linear models and
  maximum likelihood estimation.
\newblock \emph{J. of Statistical PLanning and Inference}, 137:\penalty0
  3430--3445, 2007.

\bibitem[Fienberg and Rinaldo(2012)]{FienbergRinaldo12:MLE_in_loglinear_models}
S.~Fienberg and A.~Rinaldo.
\newblock Maximum likelihood estimation in log-linear models.
\newblock \emph{Annals of Statistics}, 40:\penalty0 996--1023, 2012.

\bibitem[Gawrilow and Joswig(2000)]{polymake}
E.~Gawrilow and M.~Joswig.
\newblock polymake: a framework for analyzing convex polytopes.
\newblock In Gil Kalai and G\"unter~M. Ziegler, editors, \emph{Polytopes ---
  Combinatorics and Computation}, pages 43--74. Birkh\"auser, 2000.

\bibitem[Geyer(2009)]{Geyer09:Likelihood_inference_in_exponential_families}
C.~Geyer.
\newblock Likelihood inference in exponential families and directions of
  recession.
\newblock \emph{Electron. J. Statist.}, 3:\penalty0 259--289, 2009.

\bibitem[Haberman(1974)]{Haberman74:Analysis_of_Frequency_Data}
S.~Haberman.
\newblock \emph{The Analysis of Frequency Data}.
\newblock Univ. Chicago Press, Chicago, IL, 1974.

\bibitem[Karwa and Slavkovic(2016)]{KarwaSlavkovic16:MLE_in_betamodels}
V.~Karwa and A.~Slavkovic.
\newblock Inference using noisy degrees.
\newblock \emph{Annals of Statistics}, 44:\penalty0 87--112, 2016.

\bibitem[Lauritzen(1996)]{Lauritzen:Graphical_models}
S.~Lauritzen.
\newblock \emph{Graphical Models}.
\newblock Oxford Science Publications, first edition, 1996.

\bibitem[Letac and Massam(2012)]{LetacMassam12:Bayes_regularization}
G.~Letac and H.~Massam.
\newblock Bayes regularization and the geometry of discrete hierarchical
  loglinear models.
\newblock \emph{Annals of Statistics}, 40:\penalty0 861--890, 2012.

\bibitem[Liu and Ihler(2012)]{liu-ihler2012}
Q.~Liu and A.~Ihler.
\newblock Distributed parameter estimation via pseudo-likelihood.
\newblock \emph{International Conference on Machine Learning ICML}, 2012.

\bibitem[Massam and Wang(2015)]{MassamWang15:local_approach}
H.~Massam and N.~Wang.
\newblock A local approach to estimation in discrete loglinear models.
\newblock \emph{Preprint}, 2015.
\newblock arXiv:1504.05434.

\bibitem[Massam and Wang(2017)]{MassamWang18:Local_parameter_estimation}
H.~Massam and N.~Wang.
\newblock Local conditional and marginal approach to parameter estimation in
  discrete graphical models.
\newblock \emph{Journal of Multivariate Analysis}, 164:\penalty0 1--21, 2017.

\bibitem[Rauh et~al.(2011)Rauh, Kahle, and Ay]{RKA10:Support_Sets_and_Or_Mat}
J.~Rauh, T.~Kahle, and N.~Ay.
\newblock Support sets of exponential families and oriented matroids.
\newblock \emph{International Journal of Approximate Reasoning}, 52\penalty0
  (5):\penalty0 613--626, 2011.

\bibitem[Ravikumar et~al.(2010)Ravikumar, Wainwright, and
  Lafferty]{Ravikumar10:logistic_regression}
P.~Ravikumar, M.~Wainwright, and J.~Lafferty.
\newblock High-dimensional {I}sing model selection using $\ell_1$-regularized
  logistic regression.
\newblock \emph{Annals of Statistics}, 38\penalty0 (3):\penalty0 1287--1319,
  2010.

\bibitem[Schmidt(2005)]{minFunc}
M.~Schmidt.
\newblock {minFunc}: unconstrained differentiable multivariate optimization in
  {Matlab}.
\newblock \url{http://www.cs.ubc.ca/~schmidtm/Software/minFunc.html}, 2005.

\bibitem[Vlach(1986)]{Vlach86:3D_planar_transportation_problem}
M.~Vlach.
\newblock Conditions for the existence of solutions of the three-dimensional
  planar transportation problem.
\newblock \emph{Discrete Applied Mathematics}, 13\penalty0 (1):\penalty0
  61--78, 1986.

\bibitem[Ziegler(1998)]{Ziegler98:Lectures_on_Polytopes}
G.~Ziegler.
\newblock \emph{Lectures on Polytopes}.
\newblock Springer, second edition, 1998.

\end{thebibliography}

\end{document}